% Tiling of the curve complex
% Joan Birman, Nathan Broaddus and William Menasco
% June 9, 2014

\documentclass[10pt]{article}

\usepackage{graphicx}
\usepackage[small]{caption}
\usepackage{amsthm}
\usepackage{amsmath}
\usepackage{amsfonts}
\usepackage{amssymb}
\usepackage{mathrsfs}
\usepackage[usenames]{color}
\usepackage[colorlinks,citecolor=black,filecolor=black,linkcolor=black,urlcolor=black]{hyperref}
\usepackage{mathpazo}	%roman=Palatino, math=Palatino where possible

\setlength{\textwidth}{6.5 in}
\setlength{\textheight}{9.0 in}
\setlength{\oddsidemargin}{0 in}
\setlength{\evensidemargin}{0 in}
\setlength{\topmargin}{0 in}
\setlength{\headheight}{0 in}
\setlength{\headsep}{0.0 in}
\setlength{\parskip}{0.1 in}
\setlength{\parindent}{0.0 in}

%line spaceing
\renewcommand{\baselinestretch}{1.1}

\setlength{\captionmargin}{0.5 in}

\theoremstyle{plain}
\newtheorem{theorem}{Theorem}
\newtheorem{fact}[theorem]{Known Result}
\newtheorem{lemma}[theorem]{Lemma}
\newtheorem{proposition}[theorem]{Proposition}
\newtheorem{corollary}[theorem]{Corollary}

\newcounter{myBackupProp}

\theoremstyle{definition}
\newtheorem{definition}[theorem]{Definition}
\newtheorem{example}[theorem]{Example}

\newtheorem{remark}[theorem]{Remark}
\newtheorem{question}[theorem]{Question}

%\def\@seccntformat#1{Section\ \csname the#1\endcsname\quad}

%Allows quick creation of margin paragraphs for editorial notes

%\renewcommand{\labelenumi}{\bf (\arabic{enumi})}

				%Natural numbers
\def\Z{\mathbf{Z}}					%Integers
				%Rational numbers
					%Real numbers
					%Complex numbers

	%Homomorphisms
	%Automorphism group
	%Endomorhism group
		%Tor functor
\DeclareMathOperator{\HH}{H}		%(co)homology groups
\DeclareMathOperator{\CH}{C}		%Celluary homology chains
	%Identity map
\DeclareMathOperator{\im}{Image}	%Image
	%Diameter
\DeclareMathOperator{\supp}{supp}	%Support
\DeclareMathOperator{\Span}{span}	%Span
\DeclareMathOperator{\bs}{bs}		%Barycentric subdivision
\DeclareMathOperator{\inte}{int}		%Interior of a set
\DeclareMathOperator{\link}{Lk}		%Interior of a set

		%Symplectic group
		%General linear group
	%Projective linear group
		%Special linear group
	%Projective special linear group
	%Curvature bound for metric spaces

\DeclareMathOperator{\MCG}{Mod}	%Mapping class group
	%Diffeomorphism group
\DeclareMathOperator{\Homeo}{Homeo} 	%Homeomorphism group
\def\CC{\mathscr{C}}				%Harvey's curve complex
\def\AC{\mathscr{A}}				%Arc complex
				%Teichmueller space
				%Moduli Space
					%Tits building
	%Outer automorphism group
					%Torelli group

\DeclareMathOperator{\St}{St}		%Steinberg module
				%Filling system complex
				%Sczarba and Lee's resolution
\def\surface{\Sigma}				%Surface

\title{Finite rigid sets and homologically non-trivial spheres\\ in the curve complex of a surface}

\author{Joan Birman\footnote{The first author acknowledges partial support from the Simons Foundation, under Collaborative Research Award \# 245711}, Nathan Broaddus\footnote{The second author is partially supported by the National Science Foundation under grant DMS-1007059} and William Menasco}

\date{June 9, 2014}

\begin{document}
\maketitle

\begin{abstract}  \noindent  
 Aramayona and Leininger  have provided  a ``finite rigid subset''  $\mathfrak{X}(\surface)$ of the curve complex $\CC(\surface)$ of a surface $\surface = \surface^n_g$, characterized by the fact that any simplicial injection $\mathfrak{X}(\surface) \to \CC(\surface)$ is induced by a unique element of the mapping class group $\MCG(\surface)$.   
 In this paper we prove that, in the case of the sphere with $n\geq 5$ marked points,  the reduced homology class of the finite rigid set of Aramayona and Leininger is a $\MCG(\surface)$-module generator for the reduced homology of the curve complex $\CC(\surface)$, answering in the affirmative a question posed in \cite{AL}.  
 For the surface $\surface = \surface_g^n$ with $g\geq 3$ and $n\in \{0,1\}$ we find that the finite rigid set $\mathfrak{X}(\surface)$ of  Aramayona and Leininger contains a proper subcomplex $X(\surface)$ whose reduced homology class is a $\MCG(\surface)$-module generator for the reduced homology of $\CC(\surface)$  but which is not itself rigid.      
\end{abstract}

%\tableofcontents

\section{Introduction and Statement of Results}
\label{sec:intro}

Let $\surface_g^n$ be the surface with genus $g \geq 0$ and $n \geq 0$ marked points.  The {\em mapping class group} $\MCG(\surface_g^n)$ is the group
\[
\MCG(\surface_g^n) = \Homeo^+(\surface_g^n) \big/ \Homeo^0(\surface_g^n)
\]
where $\Homeo^+(\surface_g^n) $ is the group of orientation preserving self-homeomorphisms of  the surface $\surface_g^n$ which permute the marked points and $\Homeo^0(\surface_g^n)$ is the path component of $\Homeo^+(\surface_g^n)$ containing the identity map.

Let $V \subset \surface_g^n$ be the set of $n$ marked points.  An {\em essential curve} $\gamma$ in $\surface_g^n$ is a simple closed curve in $\surface_g^n  -  V$ such that any disk in  $\surface_g^n$ bounded by $\gamma$ must contain at least two marked points.  
The isotopy class of $\gamma$ is its orbit under the action of $\Homeo^0(\surface_g^n)$.  A {\em curve system} is a non-empty set of isotopy classes of essential curves which have pairwise disjoint representatives and a {\em sub curve system} of a curve system is any non-empty subset of a curve system. 
The {\em curve complex} $\CC = \CC(\surface_g^n)$ is the simplicial complex whose vertices correspond to isotopy classes of essential curves in $\surface_g^n$ and whose $k$-simplices correspond to curve systems with $k+1$ curves.  
Henceforth we will abuse notation by not distinguishing between essential curves and their isotopy classes. 
In the cases of the sporadic surfaces of low complexity, $\surface_0^4$, $\surface_1^0$ and $\surface_1^1$, it is customary to include simplices for sets of essential curves with pairwise minimal intersection, but for group cohomological reasons we will not do this here.  
Hence for our purposes $\CC(\surface_0^4)$, $\CC(\surface_1^0)$ and $\CC(\surface_1^1)$ will be  disconnected complexes of dimension $0$.

The curve complex plays dual geometric and homological roles for the structure of the mapping class group.  
On the one hand we have results in the vein of Ivanov \cite{Ivanov}, which retrieve the mapping class group as the set of simplicial isomorphisms of the curve complex and other related simplicial complexes.  
On the other hand we have the work of Harer \cite{Harer1986} which shows that the single non-trivial reduced homology group of the curve complex is the dualizing module for the mapping class group, thereby linking its group homology and cohomology.
One should not be too surprised at these dual functions for the curve complex, as the curve complex was originally proposed by Harvey \cite{Harvey} as an analog for the mapping class group of the spherical building at infinity for nonuniform lattices in semi-simple Lie groups.

Recent work of Aramayona and Leininger \cite{AL} advanced  the Ivanov side of the picture by giving a finite rigid subcomplex $\mathfrak{X}(\surface)$ in the curve complex with the property that any simplicial injection of $\mathfrak{X}(\surface)$ into the curve complex is induced by a unique mapping class.  
In this paper we relate the work in \cite{AL} to the Harer side  of that picture by relating the finite rigid set $\mathfrak{X}(\surface)$ of  Aramayona and Leininger to a homologically non-trivial sphere $X(\surface)$ in the curve complex whose orbit under the action of the mapping class group generates the reduced homology of the curve complex.  
First, we focus on the sphere with $n$ marked points.  In Proposition~\ref{prop:simpsphere} we find, for each $n\geq 5$, an explicit set of curves on $\surface^n_0$ whose associated vertices  in $\CC(\surface^n_0)$ determine an essential simplicial sphere in $\CC(\surface^n_0)$.  The information in Proposition~\ref{prop:simpsphere} will allow us to prove our first main result in $\S$\ref{sec:al}: 
%%%%%%%%
\begin{theorem}
\label{thm:same}
Assume that $n\geq 5$.  Then the finite rigid set $\mathfrak{X}(\surface^n_0) \subset \CC (\surface_0^n)$ given in \cite{AL} is precisely the essential $(n-4)$-sphere 
$X(\surface^n_0) \subset \CC (\surface_0^n)$ whose reduced homology class is determined in Proposition~\ref{prop:simpsphere} of this paper to be a $\MCG(\surface_0^n)$-module generator of the reduced homology of the curve complex.    This answers Question 2 of \cite{AL} in the affirmative for $n\geq 5$.
\end{theorem}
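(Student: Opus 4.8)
The plan is to deduce Theorem~\ref{thm:same} from Proposition~\ref{prop:simpsphere} by identifying the two subcomplexes $\mathfrak{X}(\surface_0^n)$ and $X(\surface_0^n)$ of $\CC(\surface_0^n)$ directly. The governing observation is that both are \emph{full} subcomplexes of $\CC(\surface_0^n)$: by Proposition~\ref{prop:simpsphere}, $X(\surface_0^n)$ is the subcomplex spanned by an explicit finite set of essential curves, and the Aramayona--Leininger set $\mathfrak{X}(\surface_0^n)$ is by construction the simplicial span in $\CC(\surface_0^n)$ of the finite vertex set prescribed in \cite{AL}. Since a finite set of vertices of $\CC(\surface_0^n)$ spans a simplex precisely when the corresponding curves admit pairwise disjoint representatives, any two full subcomplexes with the same vertex set are equal. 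The theorem therefore reduces to the single assertion that the vertex set of $\mathfrak{X}(\surface_0^n)$ equals the vertex set of $X(\surface_0^n)$ --- that the two prescribed lists of isotopy classes of essential curves on $\surface_0^n$ coincide.

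To prove this I would fix the concrete model of $\surface_0^n$ used in \S\ref{sec:al} to describe $X(\surface_0^n)$, translate the Aramayona--Leininger recipe for $\mathfrak{X}(\surface_0^n)$ into that model, and compare the two finite collections of curves term by term. A natural first step and sanity check is to verify that the two vertex sets have the same cardinality, and then to exhibit the expected correspondence between the two lists and check that it carries each curve to an isotopic curve. The case $n=5$ is a transparent instance of the phenomenon: there both $\mathfrak{X}(\surface_0^5)$ and $X(\surface_0^5)$ should be the pentagon in $\CC(\surface_0^5)$ spanned by the five essential curves each of which bounds a disk containing a cyclically consecutive pair of punctures, and one wants the analogous explicit matching for all $n\geq 5$.

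I expect the main obstacle to be exactly this translation and matching step: \cite{AL} describe $\mathfrak{X}(\surface_0^n)$ by way of the reference configuration they use to prove rigidity, whereas $X(\surface_0^n)$ is built in \S\ref{sec:al} in whatever form is most convenient for the homological computation underlying Proposition~\ref{prop:simpsphere}, and the real substance of the argument is the bookkeeping that shows these two recipes produce the same isotopy classes, with no stray curve on either side. Once the vertex sets are matched, fullness upgrades the identification to the equality of complexes $\mathfrak{X}(\surface_0^n) = X(\surface_0^n)$; Proposition~\ref{prop:simpsphere} then says that this common complex is an essential $(n-4)$-sphere whose reduced homology class is a $\MCG(\surface_0^n)$-module generator of the unique nonzero reduced homology group of $\CC(\surface_0^n)$. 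Since Question~2 of \cite{AL} asks precisely whether the homology class carried by the finite rigid set generates the reduced homology of the curve complex as a mapping class group module, this settles it affirmatively for $n\geq 5$. The hypothesis $n\geq 5$ is used only in that for $n\leq 4$ the complex $\CC(\surface_0^n)$ is empty or a disconnected $0$-complex, so there is no $(n-4)$-sphere statement to be made.
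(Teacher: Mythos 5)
Your reduction of the theorem to a vertex-set identification is exactly the paper's strategy: both $\mathfrak{X}(\surface_0^n)$ and $X(\surface_0^n)$ are full subcomplexes of $\CC(\surface_0^n)$ (i.e., spans of their vertex sets), so it suffices to match the two finite lists of isotopy classes, and Proposition~\ref{prop:simpsphere} then supplies the answer to Question~2 of \cite{AL}. But you have not actually carried out the matching: you correctly flag it as ``the real substance of the argument'' and then leave it as a to-do. That step is not mere bookkeeping and cannot be omitted. The paper's proof fills it with one concrete geometric observation: place the arc system $\upsilon = \{y_1,\dots,y_{n-2}\}$ of Figure~\ref{fig:path} along $n-2$ consecutive edges of the $n$-gon $G_n$ whose double is $\surface_0^n$, with $p_\upsilon$ the marked point not incident to any arc. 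Then for each segment $w$ joining two nonadjacent sides of $G_n$, its double $c_w$ is precisely the boundary curve of the maximal connected sub arc system of $\upsilon$ contained in the component of $\surface_0^n - c_w$ not containing $p_\upsilon$; conversely each $x_J$ arises this way. This is a bijection $\mathfrak{X}^0 \leftrightarrow X^0$, and fullness then gives $\mathfrak{X} = X$. Without exhibiting something of this sort, the proof is incomplete. (Your suggested cardinality check $|\mathfrak{X}^0| = \binom{n}{2}-n = \frac{n(n-3)}{2} = |X^0|$ is a correct sanity check but of course does not by itself produce the correspondence.)

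One smaller inaccuracy: your explanation of the hypothesis $n \geq 5$ is not quite right. For $n = 4$ the complex $\CC(\surface_0^4)$ is a countably infinite discrete set and $X(\surface_0^4)$ \emph{is} a perfectly good $0$-sphere (two disjoint curves), so there is an $(n-4)$-sphere statement to be made; the obstruction is rather that a discrete countably infinite set has no finite rigid subset at all, and moreover $\mathfrak{X}(\surface_0^4)$ in \cite{AL} is defined inside the standard $2$-dimensional curve complex $\CC'$, not $\CC$. This is exactly what $\S$\ref{SS:sporadic} of the paper addresses.
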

%%%%%%%%%
Given the surprising coincidence of our essential sphere and the finite rigid set of Aramayona and Leininger for the surface $\surface_0^n$, and the fact that $X(\surface^6_0) = X(\surface^0_2) = \mathfrak{X}(\surface^0_2)$,  one might be tempted to conjecture that homologically non-trivial spheres in the curve complex are always finite rigid sets.  
Attempting to test our conjecture in the cases $\surface^0_g, \ g\geq 3$ we learned that it is not true.  In Proposition~\ref{prop:geng} we construct essential spheres $X(\surface^0_g)$ and $X(\surface^1_g)$ in $\CC(\surface^0_g)$ and $\CC(\surface^1_g)$ respectively.  Our second main result, proved in $\S$\ref{sec:gcomp} is:  
%%%%%%%%
\begin{theorem}
\label{thm:notiff}
Let $g \geq 3$ and $n \in \{0,1\}$ or $g=2$ and $n=1$. Then the essential $(2g-2)$-sphere $X(\surface^n_g)\subset\CC(\surface_g^n)$ that is determined in Proposition~\ref{prop:geng}   
(i) represents a $\MCG(\surface_0^n)$-module generator for the reduced homology of $\CC(\surface^n_g)$, (ii) is a proper subset of $\mathfrak{X}(\surface^n_g)$,  but (iii) is not rigid.  
\end{theorem}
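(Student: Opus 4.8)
The statement has three parts, of which (i) is essentially a restatement of Proposition~\ref{prop:geng} --- that proposition already presents $X(\surface^n_g)$ as an explicit simplicial $(2g-2)$-sphere whose reduced homology class is a $\MCG(\surface^n_g)$-module generator of $\tilde H_{2g-2}(\CC(\surface^n_g))$, established along the lines of Proposition~\ref{prop:simpsphere} for the punctured sphere --- so the substance of the theorem is in (ii) and (iii), with (iii) the crux.

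For (ii) the plan is a direct combinatorial comparison of the two finite subcomplexes. Since $\mathfrak X(\surface^n_g)$ is by construction a full subcomplex of $\CC(\surface^n_g)$ and a simplex of $\CC(\surface^n_g)$ is merely a set of pairwise disjoint curves, it suffices to verify the inclusion $V(X(\surface^n_g))\subseteq V(\mathfrak X(\surface^n_g))$ of vertex sets together with the existence of one curve in $\mathfrak X(\surface^n_g)\setminus X(\surface^n_g)$. Concretely, I would draw $\surface^n_g$ using the handle configuration underlying Proposition~\ref{prop:geng}, observe that it is a sub-picture of the decomposed surface underlying the construction of \cite{AL}, read off that every curve of the explicit family defining $X(\surface^n_g)$ appears among the curves of $\mathfrak X(\surface^n_g)$, and then exhibit one curve of $\mathfrak X(\surface^n_g)$ --- for instance one whose complementary subsurface has a topological type that no curve of $X(\surface^n_g)$ realizes --- to obtain proper containment. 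Once the two pictures are aligned this step is finite and essentially mechanical.

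For (iii), the heart of the theorem, the plan is to produce an explicit locally injective simplicial map $X(\surface^n_g)\to\CC(\surface^n_g)$ that is not induced by any homeomorphism; the natural candidate is a simplicial automorphism $\tau$ of $X(\surface^n_g)$. Such a $\tau$ exists because the combinatorial model of Proposition~\ref{prop:geng} is visibly symmetric --- say an interchange of two curves, or of the two ends of a chain of curves, playing interchangeable combinatorial roles --- and any vertex permutation of a flag complex preserving its adjacency graph is simplicial, while a bijective simplicial self-map is automatically locally injective. The real work is to show that the chosen $\tau$ is realized by no $h\in\MCG^{\pm}(\surface^n_g)$: for this I would track homeomorphism-invariant data attached to the curves and curve systems of $X(\surface^n_g)$ --- whether a given curve separates and the resulting genus/marked-point splitting, or, more robustly, the homeomorphism type of the (possibly disconnected, possibly bounded) surface obtained by cutting $\surface^n_g$ along a given simplex of $X(\surface^n_g)$ --- and exhibit a curve $c$ (or a simplex) of $X(\surface^n_g)$ with $\tau(c)$ of a different such type; since any homeomorphism preserves this data, no $h$ can restrict to $\tau$ on $X(\surface^n_g)$, and $X(\surface^n_g)$ is therefore not rigid. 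The main obstacle is precisely this: locating inside the model of Proposition~\ref{prop:geng} a combinatorial symmetry that fails to be topological --- exactly the feature the ``extra'' curves of $\mathfrak X(\surface^n_g)$ identified in (ii) exist to destroy --- and choosing an invariant robust enough to certify non-realizability cleanly. The cases $n=0$, $n=1$ and $(g,n)=(2,1)$ are handled by the same model, the marked point in the $n=1$ case only adding a constraint on $h$ and hence making non-realizability easier to detect.
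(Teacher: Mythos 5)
Parts (i) and (ii) you dispose of as the paper effectively does: (i) is Proposition~\ref{prop:geng}, and (ii) is a finite vertex-by-vertex check against the model of \cite{AL}. The substance is in (iii), and there your plan diverges from the paper in a way that I believe would not work as stated. You propose to find a simplicial \emph{automorphism} $\tau$ of $X(\surface_g^n)$ that is not topologically realized, detected by an invariant (separating type, genus of complementary pieces, etc.) that $\tau$ fails to preserve. But your candidate $\tau$ --- the flip of the chain, or any symmetry of the associahedron dual $D_{2g+1}$ that $X$ visibly carries --- almost certainly \emph{is} realized by an (extended) mapping class of $\surface_g^n$: the chain $z_1',\dots,z_{2g}'$ admits an involution reversing the chain, and the full dihedral symmetry of the combinatorial model is mirrored in the surface. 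More to the point, any automorphism of $X$ necessarily preserves which pairs of vertices are \emph{non}-adjacent, so the topological-type invariant you plan to track is very likely preserved by every such $\tau$, and the proposed obstruction never fires. You acknowledge this is "the main obstacle," but you have not resolved it, and I think it is not resolvable along that line.

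The paper's argument avoids this entirely by constructing a simplicial injection $f\colon X\to\CC$ whose \emph{image is not} $X$. It keeps every vertex fixed except one: for $g\ge 3$ it replaces $x_{\{1,2,3\}}$ by $x_{\{1,2,3\}}'$, the \emph{other} boundary component of the same regular neighborhood $N_{\{1,2,3\}}$ (a curve lying outside $X^0$); for $g=2$, $n=1$ it replaces $x_{\{1,2\}}$ by the curve obtained by dragging $x_{\{1,2\}}$ across the marked point. In each case the replacement curve is disjoint from everything $x_{\{1,2,3\}}$ (resp.\ $x_{\{1,2\}}$) was disjoint from inside $X^0$, so $f$ is simplicial and injective --- but the crucial observation is that a simplicial injection of $X$ into $\CC$ is only required to send simplices to simplices; it is \emph{not} required to send non-adjacent pairs to intersecting curves. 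The paper then exhibits a non-edge of $X$ that $f$ sends to a disjoint pair ($x_{\{1,2,3\}}$ meets $x_{\{3,4,5\}}$ but $x_{\{1,2,3\}}'$ is disjoint from $x_{\{3,4,5\}}$), which no homeomorphism can do; in the $g=2$, $n=1$ case an explicit enumeration of the (finitely many) extended mapping classes fixing the chain curves shows none agrees with $f$ on $x_{\{1,2,3\}}$, $x_{\{2,3,4\}}$. This is the idea missing from your proposal: to break rigidity you should not look for hidden symmetries of $X$, but should move a single vertex of $X$ to a nearby curve \emph{not} in $X$ in a way that creates a spurious disjointness, exploiting the asymmetry between "simplicial injection" (which only preserves disjointness) and "induced by a homeomorphism" (which also preserves intersection).
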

%%%%%%%%%%%%
Modified versions of Theorems~\ref{thm:same} and \ref{thm:notiff} hold for the sporadic surfaces $\surface_0^4$, $\surface_1^0$ and $\surface_1^1$.  See $\S$\ref{SS:sporadic} for the precise statements.

While Theorem~\ref{thm:notiff} rules out the possibility that homologically non-trivial spheres in the curve complex must be rigid,  we have the following suggestive corollary:
%%%%%%%%%%%%
\begin{corollary}
Suppose $g=0$ and $n \geq 5$ or $g \geq 2$ and $n \in \{0,1\}$. Let $ \mathfrak{X}( \surface_g^n) \subset \CC( \surface_g^n)$ be the finite rigid set given in \cite{AL}. Then the inclusion map $i: \mathfrak{X}( \surface_g^n) \to \CC( \surface_g^n)$ induces a homomorphism $i_\ast: \widetilde{\HH}_\ast (\mathfrak{X}( \surface_g^n)) \to \widetilde{\HH}_\ast (\CC( \surface_g^n))$ with non-trivial image.
\end{corollary}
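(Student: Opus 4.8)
The plan is to obtain the corollary directly from Theorems~\ref{thm:same} and \ref{thm:notiff}, so that the only extra ingredient needed is Harer's theorem \cite{Harer1986}: it tells us that $\widetilde{\HH}_\ast(\CC(\surface_g^n))$ is a \emph{nonzero} $\MCG(\surface_g^n)$-module (a wedge of spheres of the appropriate dimension). Consequently any element that generates this module over $\MCG(\surface_g^n)$ must be a nonzero homology class; in particular the reduced homology class $[X(\surface_g^n)]$ of the essential sphere constructed in Proposition~\ref{prop:simpsphere} (when $g=0$) or Proposition~\ref{prop:geng} (when $g\geq 2$) is nonzero in $\widetilde{\HH}_\ast(\CC(\surface_g^n))$. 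I would begin the proof by recording this elementary observation.

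Next I would assemble, in each case of the hypothesis, an inclusion of subcomplexes $\iota\colon X(\surface_g^n)\hookrightarrow\mathfrak{X}(\surface_g^n)$ together with the fact that $[X(\surface_g^n)]$ is an $\MCG(\surface_g^n)$-module generator of $\widetilde{\HH}_\ast(\CC(\surface_g^n))$. For $g=0$, $n\geq 5$ this is Theorem~\ref{thm:same} (there $\iota$ is an equality). For $g\geq 3$, $n\in\{0,1\}$, or $g=2$, $n=1$, this is Theorem~\ref{thm:notiff}, parts (i) and (ii). The remaining case $g=2$, $n=0$ is not literally covered by Theorem~\ref{thm:notiff}, but there one has $X(\surface_2^0)=\mathfrak{X}(\surface_2^0)$ (recorded in the introduction), and the module-generator property comes from Proposition~\ref{prop:geng}, so the sphere case applies verbatim. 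Now the composite inclusion $i\circ\iota\colon X(\surface_g^n)\hookrightarrow\CC(\surface_g^n)$ induces a map $\widetilde{\HH}_\ast(X(\surface_g^n))\to\widetilde{\HH}_\ast(\CC(\surface_g^n))$ which, by definition of $[X(\surface_g^n)]$, sends the fundamental class of the sphere $X(\surface_g^n)$ to $[X(\surface_g^n)]\neq 0$. Hence the image of $i_\ast$ contains the image of $(i\circ\iota)_\ast$, which contains $[X(\surface_g^n)]\neq 0$, so $i_\ast$ has nontrivial image.

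I do not expect any genuine obstacle here: the corollary is a formal consequence of the two main theorems, the mathematical content being entirely in the statements that $X(\surface_g^n)\subseteq\mathfrak{X}(\surface_g^n)$ and that $[X(\surface_g^n)]$ generates the reduced homology of the curve complex as an $\MCG$-module. The only point requiring a little attention is bookkeeping the case $g=2$, $n=0$, which lies outside the hypotheses of Theorem~\ref{thm:notiff} and so must be handled using the coincidence $X(\surface_2^0)=\mathfrak{X}(\surface_2^0)$ rather than that theorem directly.
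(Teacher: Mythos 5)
Your argument is essentially the same as the paper's: both observe that the inclusion $X(\surface_g^n)\hookrightarrow\CC(\surface_g^n)$ factors through $\mathfrak{X}(\surface_g^n)$ and carries a nonzero class, so $i_\ast$ has nontrivial image. You are somewhat more careful than the paper's terse proof in explicitly flagging that the case $g=2$, $n=0$ falls outside the hypotheses of Theorem~\ref{thm:notiff} and must be handled via the coincidence $X(\surface_2^0)=\mathfrak{X}(\surface_2^0)$ together with Proposition~\ref{prop:geng}, which is a worthwhile clarification.
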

%%%%%%%%%%%%
\begin{proof}
In both cases the inclusion map $j:X \to \CC$ factors through the inclusion map  $i:\mathfrak{X} \to \CC$ and induces a homomorphism $j_\ast: \widetilde{\HH}_\ast (X) \to \widetilde{\HH}_\ast (\CC)$ with non-trivial image.
%%%%%%%%%%%%
\end{proof}
\begin{question} Suppose $3g+n \geq 5$. Does the inclusion map  $i:  \mathfrak{X}( \surface_g^n) \to \CC( \surface_g^n)$ of the finite rigid set given in \cite{AL} induce a homomorphism $i_\ast: \widetilde{\HH}_\ast (\mathfrak{X}( \surface_g^n)) \to \widetilde{\HH}_\ast (\CC( \surface_g^n))$ with non-trivial image?
\end{question}

Here is a guide to the paper.  We begin, in $\S$\ref{sec:review of the background}, with a review of the necessary background.    The associahedron will enter into this review because it concisely describes the simplicial structure of the spheres in the curve complex which we demonstrate to be homologically non-trivial. We introduce it in $\S$\ref{sec:back}.

Our new work begins in $\S$\ref{sec:surfaces of genus 0}, where we restrict our discussion to surfaces of genus 0 with $n\geq 4$ marked points and give a finite $\MCG$-module resolution of the Steinberg module (the reduced homology of the curve complex) using the the cellular chain complex of the arc complex relative to the arc complex at infinity.
The last two terms of this resolution give a  $\MCG$-module presentation of this homology group which we use to give a single class whose orbit under the mapping class group generates the entire relative homology group. 
In $\S$\ref{sec:ccsph} we use Harer's homotopy equivalence discussed in $\S$\ref{sec:simpmap} to convert our non-trivial class in the homology of the arc complex relative to the arc complex at infinity to a class in the homology of the curve complex. We then simplify our representative of this class. 
In Proposition~\ref{prop:simpsphere} we give, explicitly, a finite subset $X$ of the curve complex $\CC$ the orbit of whose homology class under the action of the mapping class group generates the reduced homology of the curve complex. In $\S$\ref{sec:al} we find that the simplified representative is precisely the finite rigid set of Aramayona and Leininger, proving Theorem~\ref{thm:same}.

In $\S$\ref{sec:genusg} we consider the same situation when $g\geq 1$ and $n\in \{0, 1\}$.  This situation was already treated in 
\cite{Broaddus}, however we are now able to simplify the results given there drastically.  We give, in Proposition~\ref{prop:geng},  an explicit  simplified homologically non-trivial  sphere in the curve complex the orbit of whose homology class under the action of the mapping class group generates the reduced homology of the curve complex. 
We  establish that it is a proper subset of the finite rigid set of Aramayona and Leininger.   In  $\S$\ref{sec:gcomp} we prove  Theorem~\ref{thm:notiff}, by showing that it is not a finite rigid set.

Modified versions of Theorems~\ref{thm:same} and \ref{thm:notiff}, for sporadic surfaces, are discussed and proved in $\S$\ref{SS:sporadic}.

{\bf Acknowledgement:}  \  We gratefully acknowledge the help of the anonymous referee, who  suggested significant simplifications to the work in $\S$\ref{sec:ggen} and $\S$\ref{sec:gcomp}.

%%%%%%%%%%%%%%%%%%%%%%%%%%%%%%%%%%%%%%%%%%%%%%%%%%%%%%%%%%%%%%%%%%%%%
%%%%%%%%%%%%%%%%%%%%%%%%%%%%%%%%%%%%%%%%%%%%%%%%%%%%%%%%%%%%%%%%%%%%%
% SECTION Background
%%%%%%%%%%%%%%%%%%%%%%%%%%%%%%%%%%%%%%%%%%%%%%%%%%%%%%%%%%%%%%%%%%%%%
%%%%%%%%%%%%%%%%%%%%%%%%%%%%%%%%%%%%%%%%%%%%%%%%%%%%%%%%%%%%%%%%%%%%%

\section{Review of the Background}
\label{sec:review of the background}

Our results in $\S$\ref{sec:surfaces of genus 0} and $\S$\ref{sec:genusg}  will concern the surfaces $\surface_0^n$ where $n \geq 4$ and $\surface_g^n$ where $g \geq 1$ and $n \in \{ 0, 1\}$.  However, many of the results that we review in this section apply in greater generality so we provide the stronger statements and more general definitions when possible.

\subsection{The curve complex,  arc complex and associahedron}
\label{sec:back}

Let $n \geq 4$ or $g \geq 1$ and set
\begin{equation}
\label{eq:tau}
\tau =  \left \{\begin{array}{ll} n-4, & g = 0 \\ 2g-2, & g \geq 1 \mbox{ and } n = 0 \\ 2g+n-3, & g \geq 1 \mbox{ and } n \geq 1\end{array} \right. .
\end{equation}
By the work of Harer we have the following theorem:
\begin{fact}[{\cite[Theorem 3.5]{Harer1986}}]
\label{fact:cc}
Assume $n \geq 4$ or $g \geq 1$.
The curve complex $\CC(\surface_g^n)$ has the homotopy type of a countably infinite wedge sum of spheres of dimension $\tau$ thus
\[
\CC(\surface_g^n) \simeq \vee^\infty S^{\tau} .
\]
where $\tau$ is as in (\ref{eq:tau}).
\end{fact}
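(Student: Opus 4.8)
The plan is to reduce the assertion to three facts about $\CC = \CC(\surface_g^n)$: that it is $(\tau-1)$-connected, that $\widetilde{H}_i(\CC) = 0$ for every $i > \tau$, and that $\widetilde{H}_\tau(\CC)$ is a free abelian group of infinite rank. Granting these, I would finish as follows. When $\tau \in \{0,1\}$ one checks directly that $\dim \CC = \tau$, and a $0$-dimensional complex is a wedge of $0$-spheres while every connected graph is a wedge of circles. When $\tau \geq 2$ the $(\tau-1)$-connectedness makes $\CC$ simply connected, so the Hurewicz theorem gives $\pi_\tau(\CC) \cong \widetilde{H}_\tau(\CC)$; choosing maps $S^\tau \to \CC$ representing a $\mathbf{Z}$-basis and wedging them produces a map $\vee^\infty S^\tau \to \CC$ which is an isomorphism on all homology groups between simply connected CW complexes, hence a homotopy equivalence by Whitehead's theorem. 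Freeness of $\widetilde{H}_\tau(\CC)$ is automatic since it is a subgroup of the free group of simplicial $\tau$-chains, and infinitude of its rank falls out of the inductive analysis below.

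The real content is the $(\tau-1)$-connectedness and the vanishing of reduced homology above degree $\tau$. For a surface with at least one marked point I would work in the arc complex $\mathcal{A}(\surface)$, whose simplices are systems of disjoint, pairwise non-isotopic essential arcs joining marked points; the crucial geometric input, which I would invoke as a black box, is Harer's theorem that $\mathcal{A}(\surface)$ is contractible (provable via the ideal-cell decomposition of decorated Teichm\"uller space, or via Hatcher's combinatorial straightening flow). Inside $\mathcal{A}(\surface)$ I would single out the ``arc complex at infinity'' $\mathcal{A}_\infty(\surface)$, spanned by the non-filling arc systems --- those whose complement has a component that is not a disk or once-marked disk, equivalently those disjoint from some essential curve --- and argue that $\mathcal{A}_\infty(\surface)$ has the homotopy type of $\CC(\surface)$ (for instance by a nerve argument: for a fixed essential curve $c$ the arc systems supported in the complement of $c$ form a contractible subcomplex of $\mathcal{A}_\infty(\surface)$, and the incidence pattern of these subcomplexes reproduces the simplicial structure of $\CC(\surface)$). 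Feeding the contractibility of $\mathcal{A}(\surface)$ into the long exact sequence of the pair $(\mathcal{A}(\surface),\mathcal{A}_\infty(\surface))$ translates the statements about $\CC(\surface)$ into statements about the relative chain complex $C_\ast(\mathcal{A}(\surface),\mathcal{A}_\infty(\surface))$ generated by the filling arc systems, which I would attack by an induction on the complexity $3g-3+n$: the link of a simplex in $\mathcal{A}(\surface)$ (and likewise in $\CC(\surface)$) is a join of arc complexes (resp.\ curve complexes) of the strictly simpler subsurfaces cut off by that simplex, so the inductive hypothesis applies, with the sporadic surfaces $\surface_0^4$, $\surface_1^0$, $\surface_1^1$ (and the empty complex $\CC(\surface_0^3)$) as base cases. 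Closed surfaces $\surface_g^0$ admit no arcs, so I would treat them separately, comparing $\CC(\surface_g^0)$ with $\CC(\surface_g^1)$ under filling in the marked point --- both have nontrivial reduced homology only in degree $\tau = 2g-2$ --- or via an auxiliary complex adapted to the closed case.

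I expect the main obstacle to be exactly this inductive connectivity estimate: bookkeeping the various ways an arc can separate or fail to separate, making sure the inductive hypothesis survives passage to disconnected cut surfaces, and pinning the connectivity bound down to $\tau$ rather than something weaker. In particular, the vanishing $\widetilde{H}_i(\CC) = 0$ for $i > \tau$ in the higher-genus cases is the delicate point, since there $\tau$ is strictly below $\dim \CC$, so that vanishing is not forced by dimension and genuinely relies on the contractibility of the arc complex; when $g = 0$, by contrast, $\tau = \dim \CC$ and the vanishing is automatic.
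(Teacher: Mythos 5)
This is a Known Result cited from Harer (Theorem 3.5 of the reference), so the paper offers no proof of its own to compare against; the paper imports it as a black box and then deduces things from it. That said, your overall plan — reduce to $(\tau-1)$-connectivity plus vanishing of $\widetilde{H}_i$ above $\tau$ plus freeness of $\widetilde{H}_\tau$, then finish with Hurewicz and Whitehead; establish the input via contractibility of $\AC(\surface)$, the homotopy equivalence $\AC_\infty(\surface) \simeq \CC(\surface)$, and the long exact sequence of the pair $(\AC,\AC_\infty)$; and handle closed surfaces by the puncture-forgetting map $\CC(\surface_g^1)\to\CC(\surface_g^0)$ — is indeed the route Harer takes and the route this paper sets up in $\S$2.

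There is, however, a concrete error in the reduction step. You write that freeness of $\widetilde{H}_\tau(\CC)$ is automatic ``since it is a subgroup of the free group of simplicial $\tau$-chains,'' but $\widetilde{H}_\tau(\CC) = Z_\tau/B_\tau$ is a subgroup of $C_\tau(\CC)$ only when $B_\tau = 0$, that is, when $\dim\CC(\surface_g^n) = \tau$. Writing $\dim\CC = 3g-4+n$ for the pants-decomposition dimension, this equality holds exactly for $g=0$, for $g=1$, and for $g=2,\ n=0$. For $g\geq 2,\ n\geq 1$ and for $g\geq 3,\ n=0$ one has $\dim\CC > \tau$, so $\widetilde{H}_\tau(\CC)$ is a genuine subquotient and could a priori have torsion — compare a Moore space $M(\mathbf{Z}/2,\tau)$, which satisfies (a) and (b) but is certainly not a wedge of spheres. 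The same difficulty persists in the arc-complex formulation: even in the favorable cases $g=0$ or $n=1$ treated in $\S$\ref{sec:res}, the identification $\widetilde{H}_\tau(\CC) \cong \CH_{\tau+1}(\AC,\AC_\infty)/\partial\CH_{\tau+2}(\AC,\AC_\infty)$ expresses the Steinberg module as a quotient of a free abelian group, which is not automatically free. So for surfaces of higher genus the freeness is not a formality; it is part of what Harer proves, and in his argument it falls out of a direct (combinatorial/Morse-theoretic) construction of the homotopy equivalence to a wedge of spheres rather than being fed in as a hypothesis to the Hurewicz--Whitehead machine. As your sketch stands, this step would need either an independent proof of freeness or a reorganization along Harer's lines that produces the wedge directly.
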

In particular the reduced homology $\widetilde{\HH}_\ast(\CC(\surface_g^n);\Z)$ of the curve complex is non-trivial only in dimension $\tau$ where we get
\[
\widetilde{\HH}_{\tau}(\CC(\surface_g^n);\Z) \cong \oplus^\infty \Z .
\]

The {\em dualizing module} of a duality group $\Gamma$ of cohomological dimension $d$ is a $\Gamma$-module $D$ such that for any $\Gamma$-module $A$ and any $k \in \Z$ we have
\[
\HH^k(\Gamma;A) \cong \HH_{d-k}(\Gamma; A \otimes_{\Z} D)
\]
where $A \otimes_{\Z} D$ has the diagonal module structure. (See \cite[VIII.10]{Brown} for a general introduction to duality groups.)

In particular if $\Gamma$ is any torsion free finite index subgroup of the mapping class group $\MCG(\surface_g^n)$ where $n \geq 4$ or $g \geq 1$ then by the work of Harer \cite[Theorem 4.1]{Harer1986} $\Gamma$ is a duality group with dualizing module the {\em Steinberg module}
\[
\St = \St(\surface_g^n) := \widetilde{\HH}_{\tau}(\CC(\surface_g^n);\Z) .
\]
We view this as motivation for a careful study of the homotopy type of the curve complex.

To help us in this study, it will be convenient to introduce a closely related complex,  the ``arc complex" of a surface with marked points.
Let $\surface_g^n$ be the surface with genus $g \geq 0$ and $n \geq 1$ marked points and let $V \subset \surface_g^n$ be the set of marked points.  
An {\em arc} in $\surface_g^n$ is either the unoriented image of a properly embedded path in $\surface_g^n$ joining two points of $V$ which is disjoint from $V$ except at its endpoints or else the image of an unoriented simple loop in $\surface_g^n$ based at a point in $V$ disjoint from $V$ except at its basepoint.  
An {\em essential arc} in  $\surface_g^n$ is an arc which does not bound an embedded disk in $\surface_g^n$ whose interior is disjoint from $V$.  We will say that two arcs are {\em disjoint} if they are disjoint except possibly at their endpoints.  
The isotopy class of an essential arc is its orbit under the elements of $\Homeo^0(\surface_g^n)$ which fix the marked points pointwise.  
An  {\em arc system} is a set of isotopy classes of arcs with pairwise disjoint representatives and a {\em sub arc system} of an arc system is any non-empty subset of the arc system.  
The {\em arc complex} $\AC = \AC (\surface_g^n)$ is the simplicial complex whose vertices are isotopy classes of essential arcs and whose $k$-simplices correspond to arc systems with $k+1$ isotopy classes of arcs.  
Observe that a maximal arc system gives a triangulation of the surface $\surface_g^n$ with $n$ vertices so by Euler characteristic it will have $6g+3n-6$ arcs.  Therefore we have
\[
\dim (\AC (\surface_g^n)) = 6g+3n-7.
\]

The arc complex has a very nice property:
\begin{fact}[\cite{Hatcher}]
\label{fact:con}
If $n \geq 2$ or if both $g \geq 1$ and $n \geq 1$ then $\AC (\surface_g^n)$ is contractible.
\end{fact}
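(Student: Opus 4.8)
This is Hatcher's theorem, and the proof I would give is his ``surgery flow'' argument, which I outline here. The plan is to reduce contractibility to a purely combinatorial statement. Since $\AC(\surface_g^n)$ is a simplicial complex, hence a CW complex, by Whitehead's theorem it suffices to show that all its homotopy groups vanish, and for that I would prove the following: for a fixed essential arc $a$ in $\surface_g^n$ --- one exists precisely under the stated hypotheses --- every map of a finite simplicial complex $K \to \AC(\surface_g^n)$, in particular every sphere, is homotopic to one whose image lies in the closed star $\St(a)$. Since $\St(a)$ is a cone with apex $a$, it is contractible, so every such $K$ is null-homotopic and $\AC(\surface_g^n)$ is contractible.

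To build the homotopy I would first realize each simplex of $K$ by an arc system and isotope the finitely many arcs involved into minimal position with respect to $a$, coherently, using the bigon criterion. Then I would assign to each arc system the total geometric intersection number of its arcs with $a$. Where this number vanishes we are already inside $\St(a)$, since $a$ together with any arc system disjoint from $a$ is again an arc system. Where it does not, I would pick an ``outermost'' intersection point --- one cutting off a subarc $a'$ of $a$ whose interior is disjoint from the system --- and surger each arc running through that point along parallel copies of $a'$, discarding any component that becomes inessential or duplicates an arc already present. The surgered system is again an arc system, spans a simplex sharing a face with the original one, and has strictly smaller intersection number with $a$; carrying out such moves one intersection point at a time, and consistently over all simplices of $K$, interpolates a homotopy of $K$ inside $\AC(\surface_g^n)$. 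Iterating, after finitely many steps the image lands in $\St(a)$.

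I expect the technical heart --- and main obstacle --- to be the bookkeeping in that middle step: one must arrange the surgery so that (i) it strictly decreases a well-founded complexity, for instance the total intersection number with $a$ with ties broken by the number of arcs realizing it; (ii) the pre- and post-surgery arc systems are joinable into a single simplex, so that the moves genuinely assemble into a homotopy and not merely a sequence of nearby maps; (iii) essentiality and the requirement that a simplex have distinct vertices both survive the surgery, which is exactly where degenerate surgered components get discarded; and (iv) all of this is performed simultaneously and compatibly on every simplex of $K$ and its faces. I would also dispose of the low-complexity surfaces by hand --- for example $\AC(\surface_0^2)$ is a single vertex --- and I would note in passing that contractibility of the subcomplex of filling arc systems also follows from Penner's cell decomposition of decorated Teichm\"uller space, but Hatcher's surgery argument is what gives the full complex $\AC(\surface_g^n)$ in the generality stated.
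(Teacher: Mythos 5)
The paper does not prove this statement; it is recorded as a Known Result and cited to Hatcher's 1991 paper, so there is no internal proof to compare against. Your proposal is an accurate outline of Hatcher's surgery-flow argument---retracting $\AC(\surface_g^n)$ onto the closed star of a fixed essential arc $a$ by repeatedly surgering at an outermost intersection point with $a$---and the technical points you flag in (i)--(iv) are precisely the ones Hatcher's canonical choice of ``outermost from a fixed endpoint of $a$'' is designed to settle, since that choice makes the surgery move well-defined and automatically compatible across all simplices and their faces.
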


An arc system {\em fills} the surface $\surface_g^n$ if the arcs in the arc system cut the surface into disks with at most one marked point in their interior.  
The {\em arc complex at infinity} $\AC_\infty = \AC_\infty (\surface_g^n)$ is the union of all simplices of $\AC (\surface_g^n)$ whose corresponding arc systems do {\em not} fill the surface.  
One may also alternatively describe $\AC_\infty (\surface_g^n)$ as the subset of points in $\AC (\surface_g^n)$ with infinite stabilizers under the action of $\MCG (\surface_g^n)$.

Now suppose that $g=0$ and $n \geq 4$ or $g \geq 1$ and $n = 1$ and let $\tau$ be as in equation (\ref{eq:tau}).  At least $\tau + 2$ pairwise disjoint arcs are needed to fill the surface $\surface_g^n$. Hence $\AC_\infty (\surface_g^n)$ contains the entire $\tau$-skeleton of  $\AC (\surface_g^n)$.  
If on the other hand we have $g \geq 1$ and $n \geq 2$ then at least $\tau + 1$ pairwise disjoint arcs are needed to fill the surface which implies that $\AC_\infty (\surface_g^n)$ contains the $(\tau -1)$-skeleton of  $\AC (\surface_g^n)$ but not the entire  $\tau$-skeleton of  $\AC (\surface_g^n)$.  
This fact renders an essential part of our argument given in $\S$\ref{sec:res} applicable only to the cases where $g=0$ and $n \geq 4$ or $g \geq 1$ and $n = 1$.

We now introduce a simplicial complex which we will later embed in the curve complex as a homologically non-trivial sphere.  Let $m \geq 3$.  The associahedron $K_m$ is a convex polytope homeomorphic to the closed $(m-2)$-ball. 
Its boundary $\partial K_m$ is a cell complex homeomorphic to the $(m-3)$-sphere.  The dual of the cell complex $\partial K_m$ is a {\em simplicial} complex $D_m$ which is again homeomorphic to the $(m-3)$-sphere.  Here we provide a convenient description of $D_m$ taken from \cite[Corollary 2.7]{CD}.

Let $\Theta$ be a graph.  A {\em tube} is any proper non-empty subset of the vertices of $\Theta$ whose induced subgraph in $\Theta$ is connected. Let $t_1$ and $t_2$ be two tubes of $\Theta$.  We say that $t_1$ and $t_2$ are {\em nested} if $t_1 \subset t_2$ or  $t_2 \subset t_1$.  
The tubes $t_1$ and $t_2$ {\em overlap} if $t_1 \cap t_2 \neq \varnothing$ and $t_1$ and $t_2$ are not nested.  Tubes $t_1$ and $t_2$ are {\em adjacent} if $t_1 \cap t_2 = \varnothing$ and  $t_1 \cup t_2$ is a tube.
Two tubes in $\Theta$ are {\em compatible} if they do not overlap and are not adjacent.
\begin{definition}
Let $\mathscr{D}(\Theta)$ be the simplicial complex with a vertex for each tube of $\Theta$ and a $k$-simplex for each set of $k+1$ pairwise compatible tubes in $\Theta$.
\end{definition}
\begin{theorem}[{\cite[Corollary 2.7]{CD}}]
\label{thm:assoc}
The dual $D_m$ of the boundary $(m-3)$-sphere of the associahedron $K_m$ is the simplicial complex $\mathscr{D}(\Lambda_{m-1})$ where $\Lambda_{m-1}$ is the path graph with $m-1$ vertices given in Figure~\ref{fig:pathonly}.
\end{theorem}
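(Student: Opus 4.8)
The plan is to reduce the claim to the classical description of the face poset of the associahedron and then dualize it into the language of tubes. First I would recall the standard fact, going back to work of Stasheff and Tamari, that the proper faces of the polytope $K_m$ are in inclusion‑reversing bijection with the non‑empty sets of pairwise non‑crossing diagonals of a fixed convex $(m+1)$‑gon $P$; under this bijection a face of dimension $d$ corresponds to a set of $m-2-d$ diagonals, so the facets of $K_m$ correspond to single diagonals and the vertices of $K_m$ correspond to the triangulations of $P$. Since every triangulation of $P$ uses exactly $m-2$ diagonals, each vertex of $K_m$ lies on exactly $m-2=\dim K_m$ facets; hence $K_m$ is a simple polytope, $\partial K_m$ is a polytopal $(m-3)$‑sphere, and its dual is a \emph{simplicial} $(m-3)$‑sphere $D_m$. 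By the definition of the dual complex, the vertices of $D_m$ are the facets of $K_m$, i.e.\ the diagonals of $P$, and a set of them spans a simplex of $D_m$ if and only if the corresponding facets of $K_m$ meet in a common face, i.e.\ if and only if the corresponding diagonals are pairwise non‑crossing.

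Next I would produce the bijection with the tubes of the path graph $\Lambda_{m-1}$. Fix one edge of $P$ as a root and label the remaining $m$ edges $e_1,\dots,e_m$ in cyclic order; identify the $m-1$ vertices of $\Lambda_{m-1}$ with the ``gaps'' $g_1,\dots,g_{m-1}$, where $g_i$ sits between $e_i$ and $e_{i+1}$ and is joined to $g_{i+1}$ by an edge of $\Lambda_{m-1}$. A diagonal of $P$ cuts $P$ into two pieces, exactly one of which avoids the root; the boundary of that piece consists of the diagonal together with a block $e_{a+1},\dots,e_b$ of consecutive non‑root edges, necessarily with $b\ge a+2$. I would send this diagonal to the tube $\{g_{a+1},\dots,g_{b-1}\}$, the interval of gaps strictly inside that block. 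A short check shows this is a bijection from the diagonals of $P$ onto the tubes of $\Lambda_{m-1}$, i.e.\ the non‑empty proper subintervals of $g_1,\dots,g_{m-1}$: non‑emptiness of the tube is exactly $b\ge a+2$, properness is exactly the condition that the diagonal is not the root edge, and both sets have cardinality $\tfrac{1}{2}(m+1)(m-2)$.

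It then remains to verify that two diagonals of $P$ are non‑crossing if and only if their tubes are compatible, and this is the only step needing care. I would run a short case analysis according to how the two blocks $e_{a+1},\dots,e_b$ and $e_{c+1},\dots,e_d$ meet: if one block contains the other — in particular if the two diagonals share a vertex of $P$ — the tubes are nested; if the two blocks are disjoint (whether or not consecutive) the diagonals are non‑crossing and the tubes are disjoint with at least one gap between them; and if the blocks overlap with neither containing the other, the two diagonals interleave and hence cross, while the tubes overlap unless the common block is a single edge, in which case the tubes are adjacent in $\Lambda_{m-1}$. The one place I expect genuine attention to be needed is separating the case of two diagonals that share a vertex of $P$ (non‑crossing, with nested or far‑apart tubes) from that of two diagonals whose blocks overlap in exactly one edge (crossing, with adjacent tubes); this is precisely the phenomenon that forces the notion of compatibility for path graphs to exclude adjacency rather than only overlap. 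Assembling these equivalences identifies $D_m$ with $\mathscr{D}(\Lambda_{m-1})$, which is \cite[Corollary 2.7]{CD}.
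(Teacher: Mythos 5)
The paper does not actually prove this statement: Theorem~\ref{thm:assoc} is stated as a citation to Carr--Devadoss~\cite[Corollary~2.7]{CD}, where it appears as the path-graph special case of the general identification of graph-associahedra with tubing complexes. What you have written is therefore an independent, self-contained proof of the cited result, and it is essentially correct. Your route -- via the classical description of the face lattice of $K_m$ by non-crossing dissections of a convex $(m+1)$-gon, simplicity of $K_m$, and an explicit root-edge/gap dictionary between diagonals and intervals of $\Lambda_{m-1}$ -- is more elementary than the Carr--Devadoss argument, which constructs $K\Theta$ for arbitrary $\Theta$ by truncation of a simplex and only afterward specializes to paths. For the purposes of this paper your approach buys a concrete proof without importing the full graph-associahedron machinery, at the cost of not generalizing to other graphs.

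Two small points you should tighten. First, the parenthetical in your case analysis -- ``in particular if the two diagonals share a vertex of $P$'' filed under the nested case -- is misleading: two diagonals sharing a vertex $g_a$ can also have \emph{disjoint} consecutive blocks $\{e_{c+1},\dots,e_a\}$ and $\{e_{a+1},\dots,e_b\}$, which falls into your ``disjoint blocks'' case with tubes separated by the single gap $g_a$ (hence compatible, correctly). The trichotomy (one block contains the other / blocks disjoint / blocks overlap with neither containing the other) is exhaustive and each branch is handled correctly, so the conclusion stands; just drop or reword the parenthetical. Second, in passing from ``the facets meet in a common face'' to ``the diagonals are pairwise non-crossing'' you are implicitly using that the non-crossing dissection complex is a flag complex, i.e.\ that pairwise non-crossing diagonals can be drawn simultaneously. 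This is elementary and matches the fact that $\mathscr{D}(\Theta)$ is a flag complex by definition, but it is worth one sentence so the simplicial isomorphism, and not merely an isomorphism of $1$-skeleta, is established.
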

\begin{figure}[htbp]
\begin{center}
%\resizebox{\textwidth}{!}{%
%\framebox{
\includegraphics{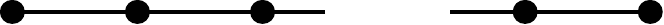}
\put(-191,11){$1$}
\put(-154.5,11){$2$}
\put(-118,11){$3$}
\put(-90,-1){\Large$\cdots$}
\put(-53,11){$m-2$}
\put(-17,11){$m-1$}
%}
\end{center}
\caption{A path graph $\Lambda_{m-1}$ with vertex set $\{ 1, 2, \cdots, m-1 \}$.\label{fig:pathonly}}
\end{figure}
\begin{example}
$D_5$ is the simplicial complex homeomorphic to $S^2$ depicted in Figure~\ref{fig:assoc}.
\end{example}
\begin{figure}[htbp]
\begin{center}
%\resizebox{\textwidth}{!}{%
%\framebox{
\includegraphics{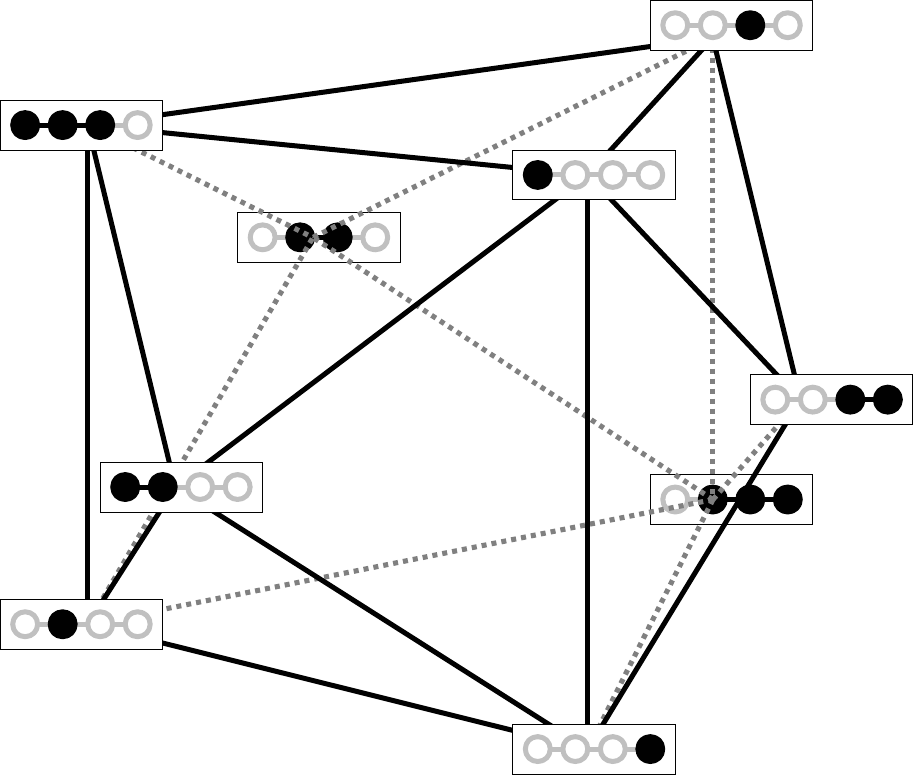}
%}
\end{center}
\caption{The simplicial complex $D_5$ is the dual of the boundary of the associahedron $K_5$\label{fig:assoc}}
\end{figure}
The curious reader will discover that the literature on the combinatorics of simplicial complexes is filled with references to the associahedron.  
In particular, the reader may wish to compare Figure~\ref{fig:assoc} with  Figure 10 of \cite{Broaddus}, which illustrates the role that the associahedron will play in this paper.  
Indeed, it is used to describe both the finite rigid subsets sets of $\CC(\surface^6_0)$ and of $\CC(\surface^0_2)$, illustrating the well-known isomorphism between these two complexes \cite{Luo}.

\subsection{Maps between simplicial complexes}
\label{sec:simpmap}

In the discussion that follows we will encounter a number of maps between simplicial complexes which are not simplicial maps but are nonetheless very well-behaved piecewise linear maps.   
For example we will describe below a continuous map from the arc complex at infinity $\AC_\infty ( \surface_g^n )$ to the curve complex $\CC ( \surface_g^n )$ which induces a homotopy equivalence (see Known Result~\ref{fact:accc}). 
Here we develop some convenient language and observations for working with these maps.

If $X$ is a simplicial complex with vertex set $X^0$ we will specify a general point $p \in X$ as a linear combination
\[
p = \sum_{v \in X^0} p_v v
\]
where $\sum_{v \in X^0} p_v =1$ and for all $v \in X^0$ we have $0 \leq p_v \leq 1$.  For a point $p \in X$ the {\em support} of $p$ is the set
\[
\supp p = \{ v \in X^0 | p_v > 0 \}
\]
and for a subset $U \subset X$ we set $\supp U = \bigcup_{p \in U} \supp p$.
Our simplicial complexes will all be finite dimensional so we will have $\dim X = \max_{p \in X} |\supp p | - 1$.  Conversely, given a set $S \subset X^0$ we will define the {\em span} of $S$ to be the set
\[
\Span S = \{ p \in X | \supp p \subset S \}.
\]

\begin{definition}[Interpolability]
\label{def:inter}
Let $X$ and $Y$ be simplicial complexes with vertex sets $X^0$ and $Y^0$ respectively.  Let $f^0: X^0 \to Y$ be a function.  We say that $f^0$ is {\em interpolable} if for any simplex $\sigma$ of $X$ there is some simplex $\eta$ of $Y$ such that for every vertex $v$ of $\sigma$ we have $f^0(v) \in \eta$.
\end{definition}
Note that it suffices to verify the condition of Definition~\ref{def:inter} for the maximal simplices of a finite dimensional simplicial complex $X$.

\begin{definition}[Linear interpolation]
\label{def:lin}
Let $X$ and $Y$ be simplicial complexes with vertex sets $X^0$ and $Y^0$ respectively.  We say that $f: X \to Y$ is a {\em linear interpolation} if the restriction $f|_{X^0} : X^0 \to Y$ is interpolable and for any point $p =  \sum_{v \in X^0}  p_v v \in X$ we have
\[
f\left( \sum_{v \in X^0}  p_v v \right) =\sum_{v \in X^0}  p_v f(v).
\]
\end{definition}

\begin{remark}
Note that any interpolable function $f^0: X^0 \to Y$ from the $0$-skeleton $X^0$ of a simplicial complex $X$ to a simplicial complex $Y$ extends to a unique linear interpolation $f:X \to Y$ via the equation
\[
f\left( \sum_{v \in X^0}  p_v v \right) =\sum_{v \in X^0}  p_v f^0(v).
\]
\end{remark}

\begin{lemma}[Linear interpolations are continuous]
\label{lem:lincont}
Let $f: X \to Y$ be a linear interpolation.  Then $f$ is continuous.
\end{lemma}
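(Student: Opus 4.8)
The plan is to exploit the fact that a simplicial complex carries the weak topology with respect to its closed simplices, so that a map $f\colon X\to Y$ is continuous as soon as its restriction to each closed simplex of $X$ is continuous. Accordingly, I would fix an arbitrary closed simplex $\sigma$ of $X$, say with vertex set $\{v_0,\dots,v_k\}\subset X^0$, and show that $f|_\sigma$ is continuous.

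First I would invoke interpolability of $f|_{X^0}$: there is a single simplex $\eta$ of $Y$ with $f^0(v_i)\in\eta$ for all $i$. Since $\eta$ is convex, the point $f\!\left(\sum_i p_i v_i\right)=\sum_i p_i f^0(v_i)$ lies in $\eta$ for every $p=\sum_i p_i v_i\in\sigma$; hence $f|_\sigma$ is a map $\sigma\to\eta$. Writing $w_0,\dots,w_\ell$ for the vertices of $\eta$ and expanding $f^0(v_i)=\sum_j a_{ij} w_j$ in barycentric coordinates (so $a_{ij}\geq 0$ and $\sum_j a_{ij}=1$), linearity of $f$ on $\sigma$ gives that the barycentric coordinates of $f(p)$ inside $\eta$ are $q_j=\sum_i a_{ij}\,p_i$, which are linear, hence continuous, functions of the barycentric coordinates $p_i$ of $p$. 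Since the barycentric-coordinate charts are homeomorphisms onto standard Euclidean simplices, $f|_\sigma\colon\sigma\to\eta$ is continuous, and composing with the continuous inclusion $\eta\hookrightarrow Y$ shows $f|_\sigma\colon\sigma\to Y$ is continuous.

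Finally I would assemble these restrictions: if $C\subset Y$ is closed, then $(f|_\sigma)^{-1}(C)$ is closed in $\sigma$ for each closed simplex $\sigma$ of $X$, so $f^{-1}(C)\cap\sigma$ is closed in $\sigma$ for all $\sigma$, and by the definition of the weak topology $f^{-1}(C)$ is closed in $X$; therefore $f$ is continuous. I do not expect a serious obstacle here: the only point requiring any care is the bookkeeping at the faces of $\sigma$, namely checking that the formula defining $f$ is consistent across a simplex and all of its faces, which is immediate from the fact that every point of $X$ has a unique barycentric expansion supported on a single simplex. The entire content of the lemma lies in unwinding Definitions~\ref{def:inter} and \ref{def:lin} and combining convexity of simplices with the weak topology on $X$.
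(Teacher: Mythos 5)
Your proof is correct and takes essentially the same approach as the paper, which simply cites the gluing lemma together with the observation that linear interpolations agree on intersections of simplices; you have supplied the details (continuity on each closed simplex via linearity in barycentric coordinates, consistency across faces, and assembly via the weak topology) that the paper leaves implicit.
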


\begin{proof}
This follows from the gluing lemma from basic topology and the observation that linear interpolations agree on intersections of simplices.
\end{proof}

\begin{lemma}
\label{lem:quil}
Let $f,g:X \to Y$ be linear interpolations and suppose that for each simplex $\sigma$ of $X$ there is a simplex $\eta$ of $Y$ such that for each vertex $v$ of $\sigma$ we have $f(v),g(v) \in \eta$. Then $f$ and $g$ are homotopic functions.
\end{lemma}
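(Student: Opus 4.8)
The plan is to use the straight-line homotopy between $f$ and $g$ inside the simplices of $Y$. I would define $H \colon X \times [0,1] \to Y$ by
\[
H(p,t) = (1-t)\, f(p) + t\, g(p),
\]
using the affine structure available on each simplex of $Y$, and then show (i) that $H$ is well defined, i.e.\ that $H(p,t)$ is genuinely a point of $Y$, (ii) that $H$ is continuous, and (iii) that $H(\,\cdot\,,0)=f$ and $H(\,\cdot\,,1)=g$. Step (iii) is immediate from the definition.

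For (i), given $p \in X$ I would pick a simplex $\sigma$ of $X$ with $p \in \sigma$ and let $\eta$ be a simplex of $Y$ as furnished by the hypothesis, so that $f(v), g(v) \in \eta$ for every vertex $v$ of $\sigma$. Since $p \in \sigma$ we have $p_v = 0$ for all vertices $v \notin \sigma$, and since $f$ is a linear interpolation, $f(p) = \sum_v p_v f(v)$ is a convex combination of points of the simplex $\eta$, hence $f(p) \in \eta$; likewise $g(p) \in \eta$. As $\eta$ is convex, the entire segment $\{(1-t)f(p)+t g(p) : t \in [0,1]\}$ lies in $\eta \subset Y$. This shows $H$ is well defined and in fact $H(\sigma \times [0,1]) \subset \eta$ for each simplex $\sigma$ of $X$. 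For (ii), by Lemma~\ref{lem:lincont} the restrictions $f|_\sigma$ and $g|_\sigma$ are continuous, and the convex-combination map $\eta \times \eta \times [0,1] \to \eta$, $(q_1,q_2,t) \mapsto (1-t)q_1 + t q_2$, is continuous; composing, $H|_{\sigma \times [0,1]}$ is continuous for every simplex $\sigma$. Since $X$ carries the weak topology determined by its (closed) simplices and $[0,1]$ is compact, $X \times [0,1]$ carries the weak topology determined by the closed sets $\sigma \times [0,1]$; these cover $X \times [0,1]$ and the restrictions of $H$ to them agree on overlaps (both being restrictions of the single formula above), so the gluing lemma gives continuity of $H$.

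The only real point-set subtlety — the ``main obstacle'', such as it is — is justifying the passage in step (ii) from continuity of $H$ on each $\sigma \times [0,1]$ to continuity on all of $X \times [0,1]$, which rests on the standard fact that the weak topology on a (finite-dimensional) simplicial complex is compatible with taking the product with the compact interval. Everything else is the elementary convexity observation that a linear interpolation sends a point of a simplex to a convex combination of the images of that simplex's vertices, together with convexity of the target simplex $\eta$.
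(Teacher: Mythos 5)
Your proof is correct and takes essentially the same approach as the paper: both construct the straight-line homotopy $(1-t)f + tg$ and verify continuity simplex-by-simplex via the gluing lemma. The paper phrases the construction at the vertex level and invokes linear interpolation, while you write the formula directly on points, but the resulting map and the verifications (convexity of the target simplex $\eta$, gluing) are identical.
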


\begin{proof}
For $t \in [0,1]$ define $F_t^0: X^0 \to Y$ by setting
\[
F_t^0(v) = (1-t)f(v) + tg(v)
\]
for each vertex $v \in X^0$.  Note that $F_t^0$ is interpolable so we may extend it to a linear interpolation $F_t:X \to Y$.  Now define $F:X \times I \to Y$ by setting $F(x,t) = F_t(x)$.  One observes that $F(x,0) = f(x)$ and $F(x,1) = g(x)$ for all $x \in X$.  The continuity of $F$ follows from the continuity of the restriction $F|_{\sigma \times I}$ for each simplex $\sigma$ of $X$ and the gluing lemma. 
\end{proof}

\begin{corollary}
\label{cor:drop}
Let $f: X \to Y$ be a linear interpolation.   For each vertex $v \in X^0$ choose a vertex $w_v \in \supp f(v)$  and define $h^0:X^0 \to Y$ by setting $h^0(v) = w_v$.  Then $h^0$ is interpolable and $f$ is homotopic to the linear interpolation $h: X \to Y$ of $h^0$.
\end{corollary}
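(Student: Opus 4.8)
The plan is to deduce the whole statement from Lemma~\ref{lem:quil}, after a short bookkeeping observation about supports. The starting point is that, by hypothesis, $f$ is a linear interpolation, so its restriction $f|_{X^0}$ is interpolable. Thus for any simplex $\sigma$ of $X$ there is a simplex $\eta$ of $Y$ with $f(v) \in \eta$ for every vertex $v$ of $\sigma$. The one thing to notice is that a point lying in a simplex $\eta$ has its support contained in the vertex set of $\eta$: this is immediate from the description of points as convex combinations $p = \sum p_w w$, since the coordinates $p_w$ that can be nonzero for a point of $\eta$ are exactly those indexed by vertices of $\eta$. Applying this with $p = f(v)$ gives $\supp f(v) \subseteq \eta^0$, and therefore the chosen vertex $w_v \in \supp f(v)$ is itself a vertex of $\eta$.

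First I would record the consequence that $h^0$ is interpolable. Fixing a simplex $\sigma$ of $X$ and the simplex $\eta$ of $Y$ provided above, we have just seen that $h^0(v) = w_v$ is a vertex of $\eta$, hence $h^0(v) \in \eta$, for every vertex $v$ of $\sigma$. This is precisely the condition of Definition~\ref{def:inter} (and by the note following that definition it would in fact suffice to check it for maximal $\sigma$), so $h^0$ is interpolable. By the remark following Definition~\ref{def:lin} it therefore extends to a unique linear interpolation $h: X \to Y$ with $h|_{X^0} = h^0$.

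Finally I would invoke Lemma~\ref{lem:quil} with the pair $f, h$. Indeed, for each simplex $\sigma$ of $X$ the simplex $\eta$ above satisfies $f(v) \in \eta$ and $h(v) = h^0(v) = w_v \in \eta$ for every vertex $v$ of $\sigma$, which is exactly the hypothesis of Lemma~\ref{lem:quil}. Hence $f$ and $h$ are homotopic, as claimed. There is no real obstacle in this argument; the only point requiring a moment's care is the elementary observation that $\supp f(v) \subseteq \eta^0$ whenever $f(v)$ lies in the simplex $\eta$, and everything else is a direct application of the preceding definitions and of Lemma~\ref{lem:quil}.
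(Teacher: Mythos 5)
Your proof is correct and takes essentially the same approach as the paper: both establish that for any simplex $\sigma$ of $X$ the simplex $\eta$ witnessing interpolability of $f|_{X^0}$ also contains the chosen vertices $w_v$, and then conclude by Lemma~\ref{lem:quil}. The only cosmetic difference is that you phrase the key containment as $\supp f(v) \subseteq \eta^0$ while the paper writes $\supp h^0(v) \subset \supp f(\sigma) \subset \eta$; these are the same observation.
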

\begin{proof}
Let $\sigma$ be a simplex of $X$.  Then since $f$ is a linear interpolation there is a simplex $\eta$ of $Y$ such that $f(\sigma) \subset \eta$ and hence $\supp f(\sigma) \subset \eta$.  
By construction if $v$ is a vertex of $\sigma$ then $\supp h^0(v) \subset \supp f(\sigma)$ so $\supp h^0(v) \subset \eta$.  It follows that $h^0$ is interpolable with linear interpolation $h:X \to Y$.  Moreover, $h(v),f(v) \in \eta$ so we may apply Lemma~\ref{lem:quil} to conclude that $h$ and $f$ are homotopic.
\end{proof}

Finally we note that if $X$ is a simplicial complex then its {\em barycentric subdivision} $\bs X$ is the simplicial complex with $0$-skeleton given by the set
\[
(\bs X)^0 = \{ \sigma | \mbox{ $\sigma$ is a simplex of $X$} \}
\]
and with $n$-simplices given by flags of simplices of $X$ of length $n+1$.  One gets a homeomorphism between $X$ and $\bs X$ by letting
$j^0: (\bs X)^0 \to X$
be the function sending the vertex $\sigma \in (\bs X)^0$ to the point $\frac{1}{|\supp \sigma|}\sum_{v \in \supp \sigma} v$ and letting 
\[
j: \bs X \to X
\]
be the linear interpolation of $j^0$.  The {\em barycentric subdivision map}
\begin{equation}\label{eq:bs}
\bs :X \to \bs X
\end{equation}
is the inverse of $j$.

We end our general discussion of linear interpolations with the following observation:
\begin{remark}
\label{rem:simp}
A linear interpolation $f: X \to Y$ is a simplicial map if and only if for all $v \in X^0$ we have $| \supp f(v) | = 1$.
\end{remark}

With the language of linear interpolations we now review Harer's homotopy equivalence between the arc complex at infinity and the curve complex.   Suppose that $g \geq 1$ and $n \geq 1$ or $g=0$ and $n \geq 4$.  Let $\AC_\infty = \AC_{\infty}( \surface_g^n)$ and $\CC = \CC( \surface_g^n)$.  Let $\bs\AC_\infty$ be the barycentric subdivision of $\AC_\infty$.  The vertices of $\bs\AC_\infty$ will correspond to non-filling arc systems with $k$-simplices corresponding to increasing flags of non-filling arc systems of length $k+1$.

When an arc system $\alpha$ does not fill the surface $\surface_g^n$ we can associate a curve system $\gamma(\alpha)$ to it by letting $N(\alpha) \subset \surface_0^n$ be a closed regular neighborhood of the union of the arcs in the arc system $\alpha$ and letting the curve system $\gamma(\alpha)$ be the set of essential (in $\surface_g^n$) boundary curves of the subsurface $\surface_g^n  -  N(\alpha)$ with duplicate curves removed.  Let
\[
\Phi^0 : (\bs \AC_\infty )^0 \to \CC
\]
be the function sending the non-filling arc system $\alpha$ to the barycenter $\frac{1}{|\gamma(\alpha)|}\sum_{c \in \gamma(\alpha)} c$ of the simplex in $\CC$ corresponding to the the curve system $\gamma(\alpha)$.

One sees that $\Phi^0$ is interpolable as follows.  If $\sigma$ is a simplex of $\bs \AC_\infty$ then it corresponds to an increasing flag of non-filling arc systems $\alpha_1 \subsetneq \alpha_2 \subsetneq \cdots \subsetneq \alpha_k$.  
We may arrange that the corresponding regular neighborhoods satisfy $N(\alpha_i) \subset \inte N(\alpha_{i+1})$ when $1 \leq i < k$.  
Thus all boundary curves can me made to be simultaneously disjoint.  Let $\eta$ be the simplex of $\CC$ corresponding to the curve system which is the union of all such sets of boundary curves.  If $v$ is a vertex of $\sigma$ then $\Phi^0(v) \in \eta$.  Thus $\Phi^0$ is interpolable.  Let
\begin{equation}
\label{eqn:phi}
\Phi : \bs \AC_\infty \to \CC
\end{equation}
be the linear interpolation of $\Phi^0$.

\begin{fact}[{\cite[Theorem 3.4]{Harer1986}}]
\label{fact:accc}
Suppose that $g \geq 1$ and $n \geq 1$ or $g=0$ and $n \geq 4$.  Then the map $\Phi$ in (\ref{eqn:phi}) induces a homotopy equivalence between $ \AC_\infty ( \surface_g^n )$ and  $\CC ( \surface_g^n )$.
\end{fact}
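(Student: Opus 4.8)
\emph{Proof sketch.} First note that by Lemma~\ref{lem:lincont} the map $\Phi$ of (\ref{eqn:phi}) is continuous, and the barycentric subdivision map identifies $\bs\AC_\infty$ canonically with $\AC_\infty$, so it suffices to show $\Phi$ is a homotopy equivalence. The plan is to realise both $\AC_\infty(\surface_g^n)$ and $\CC(\surface_g^n)$ as nerves of the ``same'' cover and then to match the resulting equivalence with $\Phi$. The workhorse throughout is the contractibility of arc complexes (Known Result~\ref{fact:con}), applied not only to $\surface = \surface_g^n$ but also to the bordered surfaces obtained by cutting $\surface$ along curve systems; one first records that under the standing hypotheses every such piece that carries an essential arc is large enough that its arc complex is contractible (by the methods behind Known Result~\ref{fact:con}, which extend to surfaces with boundary), while a piece carrying no essential arc simply drops out of the joins below.

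For each essential curve $c$ in $\surface$, let $V_c \subseteq \AC_\infty$ be the full subcomplex spanned by those essential arcs which admit a representative disjoint from $c$. An arc system fails to fill $\surface$ precisely when it is disjoint from some essential curve, so $\{V_c\}$ is a cover of $\AC_\infty$ by subcomplexes; moreover, for any finite collection of curves $c_0,\dots,c_k$ admitting a common arc in their complement, the intersection $V_{c_0}\cap\dots\cap V_{c_k}$ is the arc complex of $\surface$ cut along $c_0\cup\dots\cup c_k$, which is a join of contractible arc complexes and hence contractible. The nerve lemma then gives a homotopy equivalence from $\AC_\infty$ to the nerve $\mathscr{N}$ of $\{V_c\}$, whose vertices are the essential curves. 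There is an evident simplicial inclusion $\CC\hookrightarrow\mathscr{N}$, and the next step is to prove that this inclusion is itself a homotopy equivalence. Granting this, the composite $\CC\hookrightarrow\mathscr{N}\simeq\AC_\infty$ admits a homotopy inverse, and one checks via Corollary~\ref{cor:drop} and Lemma~\ref{lem:quil} that this inverse is homotopic to $\Phi$: indeed $\Phi$ carries the barycenter of a non-filling arc system $\alpha$ to the barycenter of $\gamma(\alpha)$, and each curve of $\gamma(\alpha)$ is disjoint from $\alpha$, so $\alpha\in V_c$ for every $c\in\gamma(\alpha)$, which is exactly the compatibility that exhibits $\Phi$ as the nerve map up to homotopy.

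The hard part is the step $\CC\hookrightarrow\mathscr{N}$. The nerve is in general strictly larger than $\CC$, because a collection of pairwise non-disjoint curves can still have a common arc in its complement (this already happens on $\surface_0^6$), so $\mathscr{N}$ contains simplices not present in $\CC$; one must show these extra simplices contribute nothing homotopically -- for instance by a deformation retraction of $|\mathscr{N}|$ onto $|\CC|$, or by an inductive sequence of elementary collapses ordered by the intersection patterns of the curve collections -- and carry this out $\MCG(\surface_g^n)$-equivariantly so that the equivalence is natural in the mapping class group. (This is Harer's theorem, \cite[Theorem 3.4]{Harer1986}. An alternative route, perhaps cleaner, avoids the nerve combinatorics altogether: using that the complement of $\AC_\infty$ in $\AC$ -- the locus of filling arc systems -- is homeomorphic to decorated Teichm\"uller space and hence is a cell, together with the contractibility of $\AC$ from Known Result~\ref{fact:con}, one compares $\AC_\infty$ with the link of $\AC_\infty$ inside $\AC$ by a homotopy-pushout argument and identifies that link, via the neighbourhood-of-arcs construction $\gamma$, with $\CC$.)
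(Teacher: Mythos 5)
The paper offers no proof of this statement: it is a Known Result cited to \cite[Theorem 3.4]{Harer1986}, and the surrounding text in $\S$\ref{sec:simpmap} only recasts Harer's homotopy equivalence in the language of linear interpolations so that it can be post-composed and simplified later. So there is no in-paper argument to compare against; your sketch has to be judged on its own.

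As it stands it has a genuine gap. You correctly identify the hard step — showing that the inclusion $\CC\hookrightarrow\mathscr{N}$ is a homotopy equivalence — and then dispose of it with the parenthetical remark that ``this is Harer's theorem.'' That is circular: proving Harer's theorem is the whole task, and you have only translated it into an equivalent nerve-collapse statement, not established it. There is also a more local problem feeding into this. For pairwise disjoint curves $c_0,\dots,c_k$ the intersection $V_{c_0}\cap\dots\cap V_{c_k}$ can be identified with an arc complex of the cut surface and Hatcher's contractibility applies. But when the $c_i$ intersect one another, ``the arc complex of $\surface$ cut along $c_0\cup\dots\cup c_k$'' is not defined, and membership of an arc class in $V_{c_i}$ requires only \emph{some} representative disjoint from $c_i$ — the representatives may differ with $i$, so the intersection need not correspond to arcs in any single complementary subsurface, and its contractibility is not a consequence of Known Result~\ref{fact:con}. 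Yet it is exactly the intersections over non-disjoint $\{c_i\}$ that produce the extra simplices of $\mathscr{N}$ beyond $\CC$, so this is precisely where the argument cannot afford to be vague. The alternative route you float via decorated Teichm\"uller space is not Harer's argument for Theorem 3.4, and as written (``the link of $\AC_\infty$ inside $\AC$,'' a ``homotopy-pushout argument'') it is too imprecise to assess.
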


\subsection{A free resolution of the homology of the Steinberg module}
\label{sec:res}

We now restrict to the cases where either $g=0$ and $n\geq 4$ or $g\geq 1$ and $n=1$ and utilize techniques similar to those in \cite{Broaddus} to give a $\MCG(\surface_g^n)$-module resolution of $\St (\surface_g^n)$. It is the failure of (\ref{eq:tail}) below when $g \geq 1$ and $n \geq 2$ which renders our argument invalid for that case.

By Known Result~\ref{fact:con} the arc complex $\AC = \AC(\surface_g^n)$ is contractible and by Known Result~\ref{fact:accc} the arc complex at infinity $\AC_\infty = \AC_\infty(\surface_g^n)$ is homotopy equivalent to the curve complex $\CC = \CC(\surface_g^n)$.  
Known Result~\ref{fact:cc} says that curve complex has the homotopy type of a wedge of $\tau$-dimensional spheres.  Therefore for $k \geq 1$
\begin{equation}
\label{eq:sphhom}
\begin{split}
\HH_{k}(\AC,\AC_\infty;\Z) & \cong \widetilde{\HH}_{k-1}( \AC_\infty;\Z) \\
 & \cong \widetilde{\HH}_{k-1}( \CC;\Z) \\
 & = \left \{\begin{array}{ll} 0, & k \neq \tau + 1 \\ \St(\surface_g^n), & k=\tau+1.\end{array} \right.
\end{split}
\end{equation}
Now consider the cellular chain complex $\CH_\ast(\AC,\AC_\infty)$ for the pair of spaces $(\AC, \AC_\infty)$.
\[
 \cdots \to 0 \to \CH_{6g+3n-7}(\AC, \AC_\infty) \to  \cdots  \to \CH_{1}(\AC, \AC_\infty) \to \CH_{0}(\AC , \AC_\infty) \to 0
\]
where as usual we define $\CH_{k}(\AC, \AC_\infty) := \CH_{k}(\AC) / \CH_{k}(\AC_\infty)$.  A chain complex is an exact sequence when all of its homology groups are $0$.  
Note that every homology group of the pair of spaces $(\AC, \AC_\infty)$ is zero except for $\HH_{\tau + 1}(\AC, \AC_\infty;\Z)$ so the chain complex $\CH_{\ast}(\AC, \AC_\infty)$ is very close to being an exact sequence.

No arc system with $2g-1$ or fewer arcs can fill the surface $\surface_g^1$ and no arc system with $n-3$ or fewer arcs can fill the surface $\surface_0^n$.  
Hence in both cases no arc system with $\tau + 1$ or fewer arcs can fill the surface so the entire $\tau$-skeleton of $\AC$ is contained in $\AC_\infty$.  Thus
\begin{equation}
\label{eq:tail}
\CH_{\tau}(\AC, \AC_\infty) := \CH_{\tau}(\AC) / \CH_{\tau}(\AC_\infty) =  \CH_{\tau}(\AC_\infty) / \CH_{\tau}(\AC_\infty) = 0.
\end{equation}

From equations (\ref{eq:sphhom}) and (\ref{eq:tail})  we have that
\begin{equation}
\label{eq:stdes}
\begin{split}
\St(\surface_g^n) &\cong \HH_{\tau + 1}(\AC , \AC_\infty) \\
& = \frac{\ker \bigg( \CH_{\tau + 1}(\AC, \AC_\infty) \to \CH_{\tau}(\AC , \AC_\infty) \bigg)}{\im \bigg(\CH_{\tau + 2}(\AC, \AC_\infty) \to \CH_{\tau + 1}(\AC , \AC_\infty) \bigg)}\\
& = \frac{\ker \bigg( \CH_{\tau + 1}(\AC, \AC_\infty) \to 0 \bigg)}{\partial \CH_{\tau + 2}(\AC, \AC_\infty)}\\
& = \frac{\CH_{\tau + 1}(\AC, \AC_\infty)}{\partial \CH_{\tau + 2}(\AC, \AC_\infty)}\\
& \cong \frac{\CH_{\tau + 1}(\AC)}{\CH_{\tau + 1}(\AC_\infty) \oplus \partial \CH_{\tau + 2}(\AC)}\\
\end{split}
\end{equation}
where $\partial$ is the boundary operator for  $\CH_\ast(\AC,\AC_\infty)$ or $\CH_\ast(\AC)$ depending on the context.  
We thus have the following very useful description of the homology of the curve complex:
\begin{proposition}
\label{prop:stpres}
Let $g = 0$ and $n\geq 4$ or $g \geq 1$ and $n=1$.  Let $\tau$ be as in (\ref{eq:tau}) and  $\CH_\ast(\AC)$ and $\CH_\ast(\AC_\infty)$ be the simplicial chain complexes of the arc complex $\AC = \AC (\surface_g^n)$ and arc complex at infinity $\AC_\infty= \AC_\infty (\surface_g^n)$ respectively.  Then the Steinberg module $\St(\surface_g^n) := \widetilde{\HH}_{\tau}(\CC(\surface_g^n);\Z)$ has the following presentation:
\[
\St(\surface_g^n)  \cong \frac{\CH_{\tau + 1}(\AC)}{\CH_{\tau + 1}(\AC_\infty) \oplus \partial \CH_{\tau + 2}(\AC)}.
\]
\end{proposition}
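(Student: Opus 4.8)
The plan is to chase through the chain of isomorphisms already assembled in equation (\ref{eq:stdes}), making sure each step is justified by a result stated earlier in the excerpt. The proof is essentially a matter of collating facts, so the exposition will be brief.

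First I would invoke Known Result~\ref{fact:con} to conclude that $\AC = \AC(\surface_g^n)$ is contractible: the hypotheses ($g=0$ and $n\geq 4$, or $g\geq 1$ and $n=1$) fall under ``$n\geq 2$ or both $g\geq 1$ and $n\geq 1$.'' Contractibility of $\AC$ gives, via the long exact sequence of the pair $(\AC,\AC_\infty)$, the isomorphism $\HH_k(\AC,\AC_\infty;\Z)\cong\widetilde\HH_{k-1}(\AC_\infty;\Z)$ for all $k\geq 1$. Next I would apply Known Result~\ref{fact:accc} (Harer's homotopy equivalence $\Phi$, whose hypotheses again match our cases) to replace $\widetilde\HH_{k-1}(\AC_\infty;\Z)$ by $\widetilde\HH_{k-1}(\CC;\Z)$, and then Known Result~\ref{fact:cc} to see that this vanishes unless $k-1=\tau$, in which case it is $\St(\surface_g^n)$ by definition. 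This is the content of (\ref{eq:sphhom}).

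Then I would establish the vanishing (\ref{eq:tail}): since no arc system with $\tau+1$ or fewer arcs can fill $\surface_g^n$ in either of our two cases (because at least $\tau+2$ pairwise disjoint arcs are needed to fill, a fact recorded in \S\ref{sec:back}), the entire $\tau$-skeleton of $\AC$ lies in $\AC_\infty$, whence $\CH_\tau(\AC,\AC_\infty) = \CH_\tau(\AC)/\CH_\tau(\AC_\infty) = 0$. With this, the relative homology $\HH_{\tau+1}(\AC,\AC_\infty)$ computed from the cellular chain complex $\CH_\ast(\AC,\AC_\infty)$ has trivial denominator-side boundary target, so it equals $\CH_{\tau+1}(\AC,\AC_\infty)/\partial\CH_{\tau+2}(\AC,\AC_\infty)$; unwinding the definition $\CH_k(\AC,\AC_\infty) = \CH_k(\AC)/\CH_k(\AC_\infty)$ and noting $\partial\CH_{\tau+2}(\AC,\AC_\infty)$ is the image of $\partial\CH_{\tau+2}(\AC)$ together with $\CH_{\tau+1}(\AC_\infty)$ inside $\CH_{\tau+1}(\AC)$, one arrives at the claimed quotient $\CH_{\tau+1}(\AC)/(\CH_{\tau+1}(\AC_\infty)\oplus\partial\CH_{\tau+2}(\AC))$. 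Finally, all the isomorphisms involved are $\MCG(\surface_g^n)$-equivariant since $\MCG(\surface_g^n)$ acts simplicially on $\AC$, $\AC_\infty$, and $\CC$, and $\Phi$ may be taken equivariant; so the presentation is a presentation of $\MCG$-modules.

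The only genuine subtlety — and the one place I would slow down — is confirming that the direct sum in the denominator is correct, i.e.\ that $\CH_{\tau+1}(\AC_\infty) \cap \partial\CH_{\tau+2}(\AC) = 0$ inside $\CH_{\tau+1}(\AC)$, or more precisely that the denominator as written equals the preimage under $\CH_{\tau+1}(\AC)\to\CH_{\tau+1}(\AC,\AC_\infty)$ of $\partial\CH_{\tau+2}(\AC,\AC_\infty)$. Here one uses that $\partial$ commutes with the quotient map $\CH_\ast(\AC)\to\CH_\ast(\AC,\AC_\infty)$, so the preimage of $\partial\CH_{\tau+2}(\AC,\AC_\infty)$ is $\CH_{\tau+1}(\AC_\infty) + \partial\CH_{\tau+2}(\AC)$; writing it as a direct sum is then just the observation that $\partial$ of a chain in $\AC$ lying in $\AC_\infty$ is already counted in $\CH_{\tau+1}(\AC_\infty)$, so any overlap is harmless and the quotient is unchanged. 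This bookkeeping is routine but is the heart of why the last line of (\ref{eq:stdes}) holds, so I would state it carefully rather than leave it implicit.
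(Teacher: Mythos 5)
Your argument is correct and follows precisely the route the paper takes: the chain of identifications $\HH_k(\AC,\AC_\infty)\cong\widetilde\HH_{k-1}(\AC_\infty)\cong\widetilde\HH_{k-1}(\CC)$ via contractibility of $\AC$ (Known Result~\ref{fact:con}) and Harer's homotopy equivalence (Known Result~\ref{fact:accc}), the vanishing $\CH_\tau(\AC,\AC_\infty)=0$ from the fill-number count, and then unwinding the relative chain complex as in (\ref{eq:stdes}). Your closing paragraph is a useful clarification that the paper leaves implicit: the symbol $\oplus$ in the denominator is mild abuse of notation, since $\CH_{\tau+1}(\AC_\infty)$ and $\partial\CH_{\tau+2}(\AC)$ genuinely overlap (e.g.\ $\partial\sigma$ for any non-filling $(\tau+2)$-simplex $\sigma$ lies in both), and the correct reading is the ordinary sum $\CH_{\tau+1}(\AC_\infty)+\partial\CH_{\tau+2}(\AC)$ arising as the preimage of $\partial\CH_{\tau+2}(\AC,\AC_\infty)$ under the quotient map — which, as you note, yields the same quotient by the third isomorphism theorem.
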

Proposition~\ref{prop:stpres} says that the reduced homology of the curve complex is the quotient of the free abelian group on all arc systems in $\surface_g^n$ with $\tau + 2$ arcs modulo all {\em non-filling} arc systems with $\tau + 2$ arcs and boundaries of arc systems with $\tau + 3$ arcs.

Notice that if $k \geq \tau + 2$ then by equation (\ref{eq:sphhom}) we have $\HH_{k}(\AC,\AC_\infty) = 0$ so
\begin{equation}
\label{eq:res}
 \cdots \to 0 \to \CH_{6g+3n-7}(\AC, \AC_\infty) \to \cdots \to \CH_{\tau + 2}(\AC, \AC_\infty) \to \CH_{\tau + 1}(\AC , \AC_\infty) 
\end{equation}
is an {\em exact} sequence.  As with any terminating exact sequence we can append two more terms to get a slightly longer exact sequence
\[
\cdots \to 0 \to \CH_{6g+3n-7}(\AC, \AC_\infty) \to \cdots  \to \CH_{\tau + 2}(\AC, \AC_\infty) \to \CH_{\tau + 1}(\AC , \AC_\infty) \to \frac{\CH_{\tau + 1}(\AC, \AC_\infty)}{\partial \CH_{\tau + 2}(\AC, \AC_\infty)} \to 0.
\]
From (\ref{eq:stdes}) we know that the penultimate term of this sequence is isomorphic to $\St(\surface_g^n)$.
Since the sequence in (\ref{eq:res}) is exact we have a $\MCG(\surface_g^n)$-module resolution of $\St(\surface_g^n)$.  
Note that a filling arc system in $\surface_g^n$ always has a finite but possibly non-trivial stabilizer in $\MCG(\surface_g^n)$.  
Hence for $k \geq \tau + 1$ the relative chain group $\CH_{k}(\AC, \AC_\infty)$ is not quite a free $\MCG(\surface_g^n)$-module.  However, if one restricts to a torsion free subgroup $\Gamma < \MCG(\surface_g^n)$ then $\CH_{k}(\AC, \AC_\infty)$ will be a free $\Gamma$-module.

\begin{proposition}
\label{prop:res}
Let $g = 0$ and $n\geq 4$ or $g \geq 1$ and $n=1$.  Let $\tau$ be as in (\ref{eq:tau}).  The Steinberg module $\St(\surface_g^n) := \widetilde{\HH}_{\tau}(\CC(\surface_g^n);\Z)$ has the following $\MCG(\surface_g^n)$-module resolution of finite length:
\[
  \cdots \to 0 \to \CH_{6g+3n-7}(\AC, \AC_\infty) \to \cdots \to \CH_{\tau + 2}(\AC, \AC_\infty) \to \CH_{\tau + 1}(\AC , \AC_\infty)  \to \St(\surface_g^n) .
\]
This resolution is not free but will be free for any torsion free subgroup $\Gamma < \MCG(\surface_g^n)$.  The terms of this resolution will be finitely generated as $\Gamma$-modules if the index of $\Gamma$ in $\MCG(\surface_g^n)$ is finite.
\end{proposition}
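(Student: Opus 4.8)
The plan is to assemble the proposition directly from the chain-level analysis already carried out in (\ref{eq:sphhom})--(\ref{eq:stdes}). First I note that the action of $\MCG(\surface_g^n)$ on $\AC = \AC(\surface_g^n)$ is simplicial and preserves the filling condition on arc systems, hence preserves $\AC_\infty$; consequently the relative cellular chain complex $\CH_\ast(\AC,\AC_\infty)$ is a complex of $\MCG(\surface_g^n)$-modules with $\MCG(\surface_g^n)$-equivariant boundary maps, so the displayed sequence is a sequence of $\MCG(\surface_g^n)$-modules and equivariant maps once we know it is exact. For exactness: by (\ref{eq:sphhom}) the relative homology $\HH_k(\AC,\AC_\infty;\Z)$ vanishes for every $k \neq \tau+1$, in particular for all $k \geq \tau+2$, so the truncated complex (\ref{eq:res}) running from $\CH_{6g+3n-7}(\AC,\AC_\infty)$ down to $\CH_{\tau+1}(\AC,\AC_\infty)$ is exact at each term in degree $\geq \tau+2$. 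By (\ref{eq:tail}) we have $\CH_\tau(\AC,\AC_\infty) = 0$, so the computation (\ref{eq:stdes}) identifies $\St(\surface_g^n)$ with the cokernel of $\partial\colon \CH_{\tau+2}(\AC,\AC_\infty)\to\CH_{\tau+1}(\AC,\AC_\infty)$; the map $\CH_{\tau+1}(\AC,\AC_\infty)\to\St(\surface_g^n)$ appearing in the statement is precisely this canonical quotient, which is surjective with kernel exactly $\im\partial$. Hence the whole sequence is exact and is a resolution.

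Next I would dispose of the length and freeness claims. The resolution has finite length since $\dim\AC = 6g+3n-7$ forces $\CH_k(\AC,\AC_\infty) = 0$ for $k > 6g+3n-7$. For freeness, recall that $\CH_k(\AC,\AC_\infty)$ is by construction the free abelian group on the set of $k$-simplices of $\AC$ not contained in $\AC_\infty$, i.e.\ on the filling arc systems with $k+1$ arcs; since a filling arc system can have a non-trivial (though finite) stabilizer in $\MCG(\surface_g^n)$, over $\MCG(\surface_g^n)$ this chain group is only a direct sum of permutation modules and need not be free. If, however, $\Gamma < \MCG(\surface_g^n)$ is torsion free, then the $\Gamma$-stabilizer of a filling arc system, being a finite subgroup of $\Gamma$, is trivial; so $\Gamma$ acts freely on the set of filling $k$-simplices, any set of $\Gamma$-orbit representatives is a $\Z[\Gamma]$-basis of $\CH_k(\AC,\AC_\infty)$, and the resolution is a complex of free $\Z[\Gamma]$-modules. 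Finally, since the $\MCG(\surface_g^n)$-action on $\AC$ admits only finitely many orbits of simplices (the quotient $\AC/\MCG(\surface_g^n)$ is a finite complex), a finite-index subgroup $\Gamma$ has only finitely many orbits of filling $k$-simplices for each $k$, whence each $\CH_k(\AC,\AC_\infty)$ is finitely generated over $\Z[\Gamma]$.

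Essentially all the substantive work has already been done in the preceding discussion, so I do not expect a serious obstacle; the one subtlety worth a sentence is the interaction of the group action with simplex orientations in the chain groups --- an element fixing a simplex could reverse its orientation and introduce torsion --- which is exactly why torsion-freeness of $\Gamma$ (killing every finite stabilizer) is the right hypothesis and makes the assertion that $\CH_\ast(\AC,\AC_\infty)$ is a complex of free $\Z[\Gamma]$-modules routine to verify.
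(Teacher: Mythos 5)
Your proposal is correct and follows essentially the same route as the paper: exactness of (\ref{eq:res}) from the vanishing in (\ref{eq:sphhom}), the identification of the cokernel with $\St(\surface_g^n)$ via (\ref{eq:tail}) and (\ref{eq:stdes}), finite length from $\dim\AC = 6g+3n-7$, and the stabilizer argument for (non-)freeness and finite generation. Your remark about orientation-reversing elements of a finite stabilizer is a useful refinement the paper leaves implicit; it explains precisely why the relative chain groups fail to be free over $\Z[\MCG(\surface_g^n)]$ even as twisted permutation modules, and why torsion-freeness of $\Gamma$ is the exact hypothesis needed.
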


\section{Essential spheres and finite rigid sets for the surfaces $\surface^n_0$}
\label{sec:surfaces of genus 0}

We begin the new work in this paper.  In this section we assume that $n \geq 4$, and let $\surface_0^n$ be the surface of genus $0$ with marked point set $V \subset \surface_0^n$ of size $n$.  We turn our attention to the goal of finding a single class in $\St(\surface_0^n)$  whose orbit under $\MCG (\surface_0^n)$ generates $\St(\surface_0^n)$ as a $\Z$-module.

From Proposition~\ref{prop:stpres} we have that
\[
\St(\surface_0^n)  \cong \frac{\CH_{n-3}(\AC)}{\CH_{n-3}(\AC_\infty) \oplus \partial \CH_{n-2}(\AC)}.
\]
$\CH_{n-3}(\AC)$ is the free abelian group on all arc systems with $n-2$ arcs.  
The mapping class group is not transitive on the set of these arc systems, but there are a finite number of orbits.  
We will show that after quotienting by the relations $\CH_{n-3}(\AC_\infty)$ and $\partial \CH_{n-2}(\AC)$ that every element of $\St(\surface_0^n)$ will be a linear combination of elements of the $\MCG (\surface_0^n)$-orbit of the class of a single arc system.

The set of non-filling arc systems with $n-2$ arcs forms a basis for  $\CH_{n-3}(\AC_\infty)$.  This is a subset of the set of all arc systems with $n-2$ arcs which forms a basis for  $\CH_{n-3}(\AC)$.  Thus we see that non-filling arc systems give the trivial class in  $\St(\surface_0^n)$.

The union of the arcs in an arc system $\alpha$ is a graph $G(\alpha)$.  Note that in general the vertices of $G(\alpha)$ will be only those marked points in $V$ which are endpoints of at least one arc in the arc system $\alpha$. 
The arc system $\alpha$ fills the surface if $\surface_0^n  -  G(\alpha)$ is a disjoint union of open disks containing at most one marked point.  
If the filling arc system has exactly $n-2$ arcs then each of these disks must contain exactly one marked point.  We note that if $\alpha$ is a filling arc system then $G(\alpha)$ must be connected since its complement is a disjoint union of open disks.  	
%%%%%%%%%%%%
\begin{lemma}
\label{lem:path}
If $\alpha$ is a filling arc system in $\surface_0^n$ with $n-2$ arcs then its class in the Steinberg module $\St(\surface_0^n)$ is a $\Z$-linear combination of the classes of filling arc systems $\beta$ with $n-2$ arcs such that the graph $G(\beta)$ is a path graph.
\end{lemma}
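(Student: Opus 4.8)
The plan is to work entirely with the presentation of Proposition~\ref{prop:stpres}: $\St(\surface_0^n)$ is the quotient of the free abelian group on $(n-2)$-arc systems by the subgroup generated by (a) the non-filling $(n-2)$-arc systems and (b) the boundaries $\partial\omega$ of $(n-1)$-arc systems $\omega$. Thus in $\St(\surface_0^n)$ every non-filling $(n-2)$-arc system is $0$, and for every $(n-1)$-arc system $\omega$ one has the relation $\sum_{a'\in\omega}\pm[\omega\setminus\{a'\}]=0$. I would run an induction on the lexicographically ordered complexity $c(\alpha)=\big(b_1(G(\alpha)),\ \#\{\text{leaves of }G(\alpha)\},\ -\lambda_{\max}(G(\alpha))\big)$, where $b_1$ is the first Betti number and, when $G(\alpha)$ is not a path, $\lambda_{\max}$ is the greatest length of a \emph{leg} of $G(\alpha)$ (a maximal embedded path from a leaf inward to the first vertex of degree $\geq 3$). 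Since $G(\alpha)$ is connected and each complementary disk of a filling $(n-2)$-arc system contains exactly one marked point, $c(\alpha)$ is minimal exactly when $G(\alpha)$ is a path graph; that is the base case, where $\alpha$ itself is one of the desired $\beta$.

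\textbf{Killing cycles.} If $b_1(G(\alpha))\geq 1$, I would pick a cycle $C$ in $G(\alpha)$, an edge $e_0\subset C$, a complementary disk $\phi$ with $e_0\subset\partial\phi$, the unique marked point $p\in\phi$, and an endpoint $v$ of $e_0$; then adjoin an essential arc $a$ from $v$ to $p$ running inside $\phi$, so that $\omega=\alpha\cup\{a\}$ is an $(n-1)$-arc system. Expanding $\sum_{a'\in\omega}\pm[\omega\setminus\{a'\}]=0$: the term with $a'=a$ is $\alpha$; a term with $a'$ a bridge of $G(\omega)$ is disconnected, and a term with $a'$ a non-bridge not on the slit face is a disk carrying two marked points, so these are non-filling and hence $0$; the surviving terms are $\omega\setminus\{e\}$ for the non-bridge edges $e$ of $G(\alpha)$ lying on $\partial\phi$ (a nonempty set, as it contains $e_0$), and each such $\omega\setminus\{e\}$ is a filling $(n-2)$-arc system with $b_1$ one smaller than $b_1(G(\alpha))$. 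So $[\alpha]$ becomes a $\Z$-linear combination of classes of strictly smaller complexity.

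\textbf{Straightening trees.} If $b_1(G(\alpha))=0$ but $G(\alpha)$ is not a path, it has a vertex of degree $\geq 3$; let $p$ be the unique marked point off $G(\alpha)$ (it lies in the single complementary disk $\Phi$, whose boundary meets every vertex of $G(\alpha)$), let $v$ be a leaf at the tip of a leg of maximal length, adjoin an essential arc $a\subset\Phi$ from $v$ to $p$, and set $\omega=\alpha\cup\{a\}$, a tree on all $n$ marked points. In $\sum_{a'\in\omega}\pm[\omega\setminus\{a'\}]=0$ the interior-edge terms are disconnected, hence $0$, the term with $a'=a$ is $\alpha$, and the remaining terms are $\alpha_\ell:=\omega\setminus\{(\text{pendant edge at }\ell)\}$ for leaves $\ell\neq v$ of $G(\alpha)$, each a filling $(n-2)$-arc system whose graph is a tree on $n-1$ vertices. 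The complexity strictly drops for each $\alpha_\ell$: if the $G(\alpha)$-neighbor $w$ of $\ell$ has degree $\geq 3$ then $G(\alpha_\ell)$ has one fewer leaf; and if $\deg_{G(\alpha)}(w)=2$ then $G(\alpha_\ell)$ has the same number of leaves but possesses a leg of length $\lambda_{\max}+1$, namely the leg at $v$ extended by $p$, so the tertiary term $-\lambda_{\max}$ strictly decreases. Hence $[\alpha]$ is again a $\Z$-linear combination of lower-complexity classes and the induction closes, since both reductions preserve "filling with $n-2$ arcs'' and reach the base case of path graphs.

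\textbf{The main obstacle.} The real work is the bookkeeping in the tree step: confirming that adjoining $a$ and deleting a pendant edge always yields a connected, filling arc system, and, above all, identifying a complexity that actually decreases when the deleted leaf sits on a long leg — here the number of leaves does \emph{not} drop, which is precisely why a tertiary invariant tracking the longest leg is forced into the definition of $c(\alpha)$ and why one must adjoin $a$ at (the tip of) a \emph{longest} leg. The cycle step is comparatively routine, the only subtlety being the count of marked points in the two faces that get merged when an edge of $C$ is removed after the slit; everything else (essentiality of $a$, distinctness of isotopy classes, Euler-characteristic computations of face counts) is standard.
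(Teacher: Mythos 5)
Your proof is correct and follows the paper's basic strategy --- adjoin a pendant arc $\hat a$ to $\alpha$ and use the relation $\partial[\alpha \cup \{\hat a\}] = 0$ in $\St(\surface_0^n)$ to exchange $[\alpha]$ for classes of ``better'' arc systems, then induct --- but the complexity driving the induction is different. The paper uses a single scalar $\mu(\alpha)$, the length of a longest \emph{tine} of $G(\alpha)$ (a maximal path from a leaf inward through valence-$2$ vertices, or a single vertex when $G(\alpha)$ has no leaves), and shows in one uniform step that each nonzero term $\hat\alpha\setminus\{b\}$ has strictly larger $\mu$; since $\mu(\alpha) \leq n-2$ with equality exactly for path graphs, the induction terminates. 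You instead strictly \emph{decrease} the lexicographic triple $\left(b_1(G(\alpha)),\, \mbox{number of leaves},\, -\lambda_{\max}\right)$, which forces two separate moves (killing a cycle drops $b_1$; straightening a tree drops the leaf count or, failing that, raises $\lambda_{\max}$) together with the tertiary invariant $\lambda_{\max}$ that you correctly identify as necessary. Both arguments are valid; the paper's is leaner because the tine length is insensitive to whether $G(\alpha)$ has cycles, so no $b_1$ bookkeeping is needed and the two cases collapse into one construction. Two minor remarks: your $\lambda_{\max}$ is undefined when $G(\alpha)$ is already a path (harmless, as that is the base case); and in your cycle step you insist that $v$ lie on a cycle, whereas the paper, in the leafless case $\mu=0$, picks an arbitrary vertex --- which amounts to the same thing, since $\mu=0$ forces every vertex of $G(\alpha)$ to have degree at least $2$.
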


\begin{proof}
Let $\alpha$ be an arc system in $\surface_0^n$ with $n-2$ arcs.  If $\alpha$ does not fill then its class in $\St(\surface_0^n)$ is trivial.  
Define a {\em tine} of the graph $G(\alpha)$ to be a path subgraph $P$ of $G(\alpha)$ which is either any single vertex of $G(\alpha)$ or else has at least one vertex which has valence one in $G(\alpha)$ and exactly two vertices which have valence not equal to 2 in $G(\alpha)$.  (See sketch (ii) of Figure~\ref{fig:tine}).  
Note that every vertex of $G(\alpha)$ is a tine of length $0$ and that distinct tines of $G(\alpha)$ can intersect in at most a single vertex.  Define the complexity $\mu(\alpha)$ of the arc system $\alpha$ to be the length of a longest tine of $G(\alpha)$.    

Let $P(\alpha)$ be a tine of $\alpha$ with maximal length.  If $\mu(\alpha) = n-2$ then $G(\alpha)$ is a path graph and we are done.  Assume that $\mu(\alpha) < n-2$.   
If $\mu(\alpha) = 0$ let $v$ be the unique vertex of $P(\alpha)$.  Otherwise let $v$ be the unique vertex of $P(\alpha)$ which has valence 1 in $G(\alpha)$.  
The vertex $v$ must be in the closure of at least one component of $\surface_0^n - G(\alpha)$.  Let $w$ be the marked point contained in the interior of such a component. (See sketch (ii)  of Figure~\ref{fig:tine}).  
Let $\hat{a}$ be an arc disjoint from the arcs of $\alpha$ connecting $v$ to $w$. Let $\hat{\alpha} = \alpha \cup \{\hat{a} \}$ which is an arc system with $n-1$ arcs.  (See sketch (iii) of Figure~\ref{fig:tine}).
\begin{figure}[htbp] 
\begin{center}
%\resizebox{\textwidth}{!}{%
\framebox{
\includegraphics{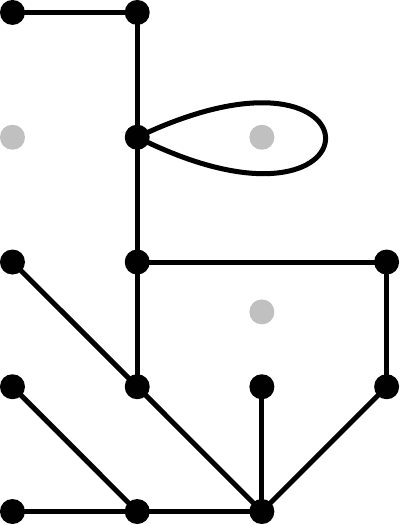}
\put(-75,-15){(i) \ $G(\alpha)$}
}
\hspace{0.2in}
\framebox{
\includegraphics{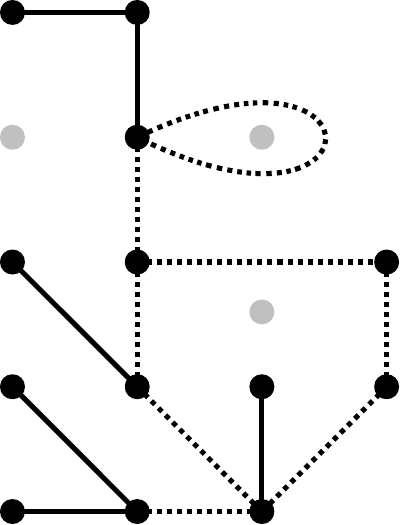}
\put(-95,-15){(ii) \ Tines of $G(\alpha)$}
\put(-106,110){$w$}
\put(-99,134){$P(\alpha)$}
\put(-114,138){$v$}
}
\hspace{0.2in}
\framebox{
\includegraphics{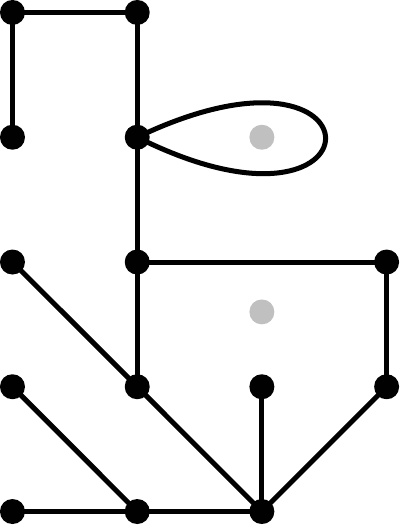}
\put(-78,-15){(iii) \ $G(\hat{\alpha})$}
\put(-106,110){$w$}
\put(-110,126){$\hat{a}$}
}
\caption{In sketch (i) $\alpha$ is a filling arc system in the surface $\surface_0^{n}$ for $n=16$ with $n-2=14$ arcs. The complexity is $\mu(\alpha)=2$.    
In sketch (ii) the solid part of the graph is the union of the tines of $G(\alpha)$.  
The graph $G(\alpha)$ has 13 tines of length zero, four tines of length one and one tine $P(\alpha)$ of length two.  
In sketch (iii) we have a filling arc system $\hat{\alpha}$ such that the relation in  $\St (\surface_0^{16})$ induced by $\partial \hat{\alpha}$ expresses the class of $\alpha$ as a linear combination of classes of filling arc systems all with with complexity at least $3$.}
\label{fig:tine}
\end{center}
\end{figure}

Given $b \in \hat{\alpha}$ set $\alpha_b = \hat{\alpha} - \{ b \}$.  We claim exactly one of the following holds:
\begin{enumerate}
\item $\alpha_b = \alpha$.
\item $\alpha_b$ is a non-filling arc system in $\surface_0^n$.
\item $\alpha_b$ is a filling arc system with $\mu(\alpha_b)  > \mu(\alpha)$.
\end{enumerate}
Note that $\alpha_b = \alpha$ if and only if $b = \hat{a}$.  Suppose $b \neq \hat{a}$.  Then $b \in \alpha$.  If $b \subset P(\alpha)$ then $G(\alpha_b)$ is disconnected so $\alpha_b$ is non-filling.  
If $b \not\subset P(\alpha)$ then $\alpha_b$ may or may not be a filling system but in either case $\mu(\alpha_b) \geq \mu(\alpha) +1$.

We may use the relation $\partial \hat{\alpha}$ in $\St(\surface_0^n)$ to express the class of the arc system $\alpha$ as a $\Z$-linear combination of classes of arc systems with higher complexity.  
By induction we may write any such class as a $\Z$-linear combination of classes of filling arcs systems with the maximal complexity $n-2$.  The graph for each of these arc systems must be a path graph.
\end{proof}

\begin{proposition}
\label{prop:gen0}
Suppose $n \geq 4$.  Let $\theta_\upsilon \subset \AC(\surface_0^n)$ be the $(n-3)$-simplex corresponding to the arc system $\upsilon$ in Figure~\ref{fig:path}.  Then the class
\[
[\theta_\upsilon] \in \St (\surface_0^n) = \frac{\CH_{n-3}(\AC)}{\CH_{n-3}(\AC_\infty) \oplus \partial \CH_{n-2}(\AC)}
\]
is non-trivial and generates $\St (\surface_0^n)$ as a $\MCG(\surface_0^n)$-module.
\end{proposition}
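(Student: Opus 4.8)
The plan is to deduce the statement from Lemma~\ref{lem:path} together with a transitivity (change-of-coordinates) argument, after which non-triviality comes for free. By Proposition~\ref{prop:stpres} and Lemma~\ref{lem:path}, every class in $\St(\surface_0^n)$ is a $\Z$-linear combination of classes $[\theta_\beta]$, where $\beta$ is a filling arc system with $n-2$ arcs whose underlying graph $G(\beta)$ is a path graph. So it is enough to prove that $\MCG(\surface_0^n)$ acts transitively on the set of such arc systems: then every such $\theta_\beta$ equals $g\cdot\theta_\upsilon$ for some $g\in\MCG(\surface_0^n)$, up to the sign that may be introduced by the action on oriented simplices, and this sign is absorbed into the $\Z$-coefficients. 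Hence the $\Z[\MCG(\surface_0^n)]$-module generated by $[\theta_\upsilon]$ contains $\pm[\theta_\beta]$ for all relevant $\beta$, and therefore equals $\St(\surface_0^n)$.

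To establish transitivity I would first pin down the combinatorial type of such a $\beta$ by an Euler-characteristic count. If $\beta$ fills $\surface_0^n$ with $n-2$ arcs then, as noted before Lemma~\ref{lem:path}, $\surface_0^n - G(\beta)$ is a disjoint union of open disks each containing exactly one marked point; writing $V$ for the number of marked points that are vertices of $G(\beta)$ and $F$ for the number of complementary disks, one has $V-(n-2)+F=2$ and $V+F=n$. When $G(\beta)$ is a path graph it has $V=n-1$ vertices, hence $F=1$: there is a single complementary disk, it contains the unique marked point not on the path, and $\beta$ is the set of $n-2$ edges of an embedded path on the other $n-1$ marked points. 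Reading around the boundary of this disk produces the palindromic arc-word $a_1a_2\cdots a_{n-2}a_{n-2}\cdots a_2a_1$, and since every vertex of $G(\beta)$ has valence at most $2$ there is no residual cyclic-order data; thus all such pairs $(\surface_0^n,\beta)$, together with their marked points, are combinatorially identical. A standard homeomorphism-extension argument — carry the ordered path of arcs of one system onto that of another, matching the marked points and the distinguished complementary disk, then extend over that disk — produces an orientation-preserving homeomorphism of $\surface_0^n$ permuting the marked points, i.e.\ an element of $\MCG(\surface_0^n)$ realizing the required equivalence.

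Finally, non-triviality of $[\theta_\upsilon]$ is immediate once generation is known: by Known Result~\ref{fact:cc} the module $\St(\surface_0^n)=\widetilde{\HH}_{n-4}(\CC(\surface_0^n);\Z)\cong\oplus^\infty\Z$ is non-zero, so a class whose $\MCG(\surface_0^n)$-orbit generates it cannot be zero. The step that needs genuine care is the transitivity argument: I must verify that ``filling, $n-2$ arcs, path graph'' really leaves no moduli — no winding or nesting of the arcs, no choice in how the complementary disk is attached — so that a single change-of-coordinates homeomorphism suffices. Everything else is bookkeeping, with the reduction already carried out in Lemma~\ref{lem:path} doing the heavy lifting.
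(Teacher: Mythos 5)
Your proof is correct, and it follows the paper's strategy for the generation step (reduce to path-graph arc systems via Lemma~\ref{lem:path}, then quote transitivity) while spelling out the transitivity claim in more detail than the paper does -- the paper simply asserts that all path-graph arc systems with $n-2$ arcs lie in one $\MCG(\surface_0^n)$-orbit, whereas you justify this via the Euler-characteristic count $V-(n-2)+F=2$, $V+F=n$ and the change-of-coordinates principle. The one place you should be slightly more careful is in claiming the resulting homeomorphism is orientation-preserving: a priori the change-of-coordinates homeomorphism could be orientation-reversing, but since a path-graph configuration on $S^2$ admits an orientation-reversing self-homeomorphism (reflection), you can always postcompose to land in $\MCG(\surface_0^n)$.

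Where you genuinely diverge from the paper is the non-triviality step, and there your route is simpler. The paper argues by contradiction using Harer's duality theorem: if $[\theta_\upsilon]=0$ then $\St(\surface_0^n)=0$, which via duality would force the cohomological dimension of a torsion-free finite-index subgroup $\Gamma<\MCG(\surface_0^n)$ to be $0$ rather than $n-3$. You instead observe directly from Known Result~\ref{fact:cc} that $\St(\surface_0^n)\cong\oplus^\infty\Z\neq 0$, so a $\MCG$-module generator cannot vanish. Both arguments rest on results of Harer that the paper already invokes (equation (\ref{eq:sphhom}) uses Known Result~\ref{fact:cc}), so there is no circularity in either; your version is shorter and avoids bringing duality into the picture, while the paper's version has the feature of keeping the argument inside the cohomological framework that motivates the construction.
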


\begin{figure}[htbp]
\begin{center}
%\resizebox{\textwidth}{!}{%
%\framebox{
\includegraphics{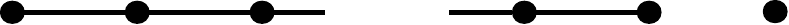}
\put(-209,9){$y_1$}
\put(-173,9){$y_2$}
\put(-69,9){$y_{n-2}$}
\put(-9,11){$p_{\upsilon}$}
\put(-126,-1){\Large$\cdots$}
%}
\end{center}
\caption{A filling arc system $\upsilon = \{ y_1, y_2, \cdots, y_{n-2} \}$ with  $n-2$ arcs in the surface $\surface_0^n$ whose class generates $\St (\surface_0^n)$ as a $\MCG(\surface_0^n)$-module.\label{fig:path}}
\end{figure}

\begin{proof}
By Lemma~\ref{lem:path} the class of any arc system with $n-2$ arcs is a $\Z$-linear combination of classes of arc systems whose graphs are path graphs.  
All path graphs with $n-2$ edges in $\surface_0^n$ are in the  $\MCG(\surface_0^n)$-orbit of the arc system $\upsilon$ pictured in Figure~\ref{fig:path}.  
Hence the $\MCG(\surface_0^n)$-orbit of the class of $\theta_\upsilon$ generates $\St (\surface_0^n)$ as a $\Z$-module.  In other words $[\theta_\upsilon]$ generates $\St (\surface_0^n)$ as a $\MCG(\surface_0^n)$-module.

Suppose that $[\theta_\upsilon]$ is the trivial class in  $\St (\surface_0^n)$.  Then $\St (\surface_0^n)$ is the zero module.  Let $\Gamma < \MCG(\surface_0^n)$ be a torsion free finite index subgroup.  
Then by \cite[Theorem 4.1]{Harer1986} the cohomological dimension of $\Gamma$ is $n-3$.  We may use duality to show that for all $k$ and all $\Gamma$-modules $A$
\[
\HH^k(\Gamma;A) \cong \HH_{n-3-k}(\Gamma; A \otimes \St (\surface_0^n)) = \HH_{n-3-k}(\Gamma; A \otimes 0) =  \HH_{n-3-k}(\Gamma; 0) = 0.
\]
This implies that the cohomological dimension of $\Gamma$ is $0$.  Hence we have a contradiction.  It follows that $[\theta_\upsilon]$ is a non-trivial class in  $\St (\surface_0^n)$.
\end{proof}

\subsection{An essential sphere in the curve complex of the surface $\surface_0^n$}
\label{sec:ccsph}

Again we assume that $g=0$ and $n\geq 4$.   In  Proposition~\ref{prop:gen0} we have a non-trivial class in
\[
\St(\surface_0^n) = \frac{\CH_{n-3}(\AC)}{\CH_{n-3}(\AC_\infty) \oplus \partial \CH_{n-2}(\AC)}.
\]
We will now use the homotopy equivalence between $\AC_\infty$ and $\CC$ and the techniques of $\S$\ref{sec:simpmap} to get a $(n-4)$-sphere in $\CC$ representing this non-trivial class.  
After simplification our $(n-4)$-sphere will be the dual $D_{n-1}$ of the boundary of the associahedron $K_{n-1}$ (See Theorem~\ref{thm:assoc}).

The class of the filling arc system in Figure~\ref{fig:path} is represented by a single $(n-3)$-simplex $\theta_\upsilon \subset \AC (\surface_g^n)$ all of whose proper faces are contained in the arc complex at infinity.  The connecting homomorphism 
\[
\partial: \HH_{n-3}(\AC, \AC_\infty;\Z) \to \widetilde{\HH}_{n-4}(\AC_\infty;\Z) 
\]
sends the class $[\theta_\upsilon] \in \HH_{n-3}(\AC, \AC_\infty;\Z)$ to the class $[\theta_\upsilon^{n-4}] \in \widetilde{\HH}_{n-4}(\AC_\infty;\Z)$ where $\theta_\upsilon^{n-4}$ denotes the $(n-4)$-skeleton of the $(n-3)$-simplex $\theta_{\upsilon}$. 
Let $\bs: \AC_\infty \to \bs \AC_\infty$ be the barycentric subdivision map from (\ref{eq:bs}). 
Recall that the homotopy equivalence $\Phi : \bs \AC_\infty ( \surface_0^n ) \to \CC ( \surface_0^n )$ from (\ref{eqn:phi}) sends a non-filling arc system to the collection of boundary curves of a regular neighborhood of the arc system.
It follows that the homology class of the $(n-4)$-sphere $\Phi ( \bs \theta^{n-4}_\upsilon)$ is a $\MCG(\surface_0^n)$-module generator for $\widetilde{\HH}_{n-4}(\CC;\Z)$ and is non-trivial.

There are a number of properties of the  $(n-4)$-sphere $\Phi (\bs \theta^{n-4}_\upsilon)$ which are undesirable.  
Firstly, since $\Phi$ is defined on the barycentric subdivision of $\AC_\infty$ the sphere $\Phi ( \bs \theta^{n-4}_\upsilon)$ has $2^{(n-2)}-2$ vertices which is exponential in the number of marked points.  
Secondly, it is a subcomplex of the barycentric subdivision of the curve complex, whereas one would like to describe the sphere as a subcomplex of the curve complex itself.  
In order to address these issues we will replace the sphere $\Phi (\bs \theta^{n-4}_\upsilon)$ with a simpler one representing the same free homotopy class.  
In fact our simplified sphere $X \subset \CC$ described below will agree with $\Phi (\bs \theta^{n-4}_\upsilon)$ setwise but will have a much simpler structure as a simplicial complex.

Let $Y \subset \bs \AC_\infty$ be the flag complex whose vertices correspond to non-empty proper sub arc systems of the filling arc system $\upsilon$ in Figure~\ref{fig:path} and whose $k$-simplices correspond to increasing flags of such sub arc systems of length $k+1$.  
Notice that $Y = \bs \theta^{n-4}_\upsilon$ is the homeomorphic image of $\theta^{n-4}_\upsilon$ under the barycentric subdivision map $\bs : \AC_\infty \to \bs \AC_\infty$ from (\ref{eq:bs}) and hence topologically a sphere of dimension $n-4$.  
Let $i: Y \to \bs \AC_\infty$ be the inclusion map and let $\varphi = \Phi \circ i$ where $\Phi: \bs \AC_\infty \to \CC$ is as in (\ref{eqn:phi}).  Now we will apply the techniques of  $\S$\ref{sec:simpmap} to simplify $\varphi$.

Let $Y^0$ be the $0$-skeleton of $Y$.  Define the function $\rho^0:Y^0 \to \CC$ as follows. Let $\alpha \subsetneq \upsilon$ be a vertex of $Y$. 
Set $\alpha_\ast \subset \alpha$ to be the left-most connected component of the arc system $\alpha$ and $\rho^0(\alpha)$ to be the unique boundary curve of $\alpha_\ast$.  Notice that $\rho^0(\alpha) \in \supp \varphi(\alpha)$.  Hence by Corollary~\ref{cor:drop} the function $\rho^0$ is interpolable. Let
\[
\rho:Y \to \CC
\]
be its linear interpolation which by Corollary~\ref{cor:drop} is homotopic to $\varphi$.

Define the subcomplex $X = X(\surface_0^n) \subset \CC(\surface_0^n)$ as follows.
For each non-empty interval $J = \{ j, j+1, \cdots, m \}  \subsetneq \{1, 2, \cdots, n-2 \}$ let $x_J$ be the boundary curve of the arc system $\{y_j, y_{j+1}, \cdots, y_m \} \subset \upsilon$. Let
\[
X^0 = \{ x_J | \mbox{ $J \subsetneq \{ 1, 2, \cdots, n-2 \}$ is a non-empty interval} \}
\]
Let $X(\surface_0^n) = \Span X^0 \subset \CC(\surface_0^n)$.  The reader may now recognize the subcomplex $X$ as precisely the complex identified by Aramayona and Leininger in \cite[$\S$3]{AL}.  
For completeness of exposition we include Lemma~\ref{lem:Xassoc} below and in $\S$\ref{sec:al} below rectify the description of $X$ above with that given in \cite[$\S$3]{AL}.
\begin{lemma}[{\cite[Theorem 3.3]{AL}}]
\label{lem:Xassoc}
The complex $X$ and the simplicial complex $D_{n-1}$ (see Theorem~\ref{thm:assoc}) are isomorphic as simplicial complexes.
\end{lemma}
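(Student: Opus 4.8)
The plan is to write down the evident vertex correspondence $J \mapsto x_J$ and verify that it is a simplicial isomorphism; all the work lies in matching which sets of vertices span simplices. First I would fix surface notation: write the path $G(\upsilon)$ with vertices $v_0, v_1, \ldots, v_{n-2}$, the arc $y_i$ joining $v_{i-1}$ to $v_i$, so that $v_0,\ldots,v_{n-2}$ are $n-1$ of the marked points and the last marked point $p_\upsilon$ lies in the unique complementary disk. For a non-empty interval $J = \{j,\ldots,m\} \subsetneq \{1,\ldots,n-2\}$ a regular neighborhood of the subpath $\upsilon_J = \{y_j,\ldots,y_m\}$ is a disk, so $x_J$ is a single curve separating $S_J := \{v_{j-1},\ldots,v_m\}$, which has $|J|+1\ge 2$ marked points, from the complementary $n-(|J|+1)\ge 2$ marked points (using $|J|\le n-3$); hence $x_J$ is essential. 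Since a curve in a punctured sphere is determined up to isotopy by the partition of the marked points it induces, $J \mapsto x_J$ is injective, and since the tubes of $\Lambda_{n-2}$ are precisely the non-empty proper subintervals of $\{1,\ldots,n-2\}$, this gives a bijection from the vertex set of $D_{n-1} = \mathscr{D}(\Lambda_{n-2})$ onto $X^0$.

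Next I would match simplices, beginning with edges. Two distinct curves on $(\surface_0^n,V)$ admit disjoint representatives exactly when the two partitions of $V$ they induce are non-crossing; because $p_\upsilon$ lies outside every $S_J$, the ``outside-both'' block of such a pair of partitions is automatically non-empty, so for our curves the criterion reduces to: $x_J$ and $x_{J'}$ are disjoint iff $S_J \subseteq S_{J'}$, $S_{J'} \subseteq S_J$, or $S_J \cap S_{J'} = \varnothing$. Writing $\hat{J} = J \cup \{\min J - 1\}$, so that $S_J = \{v_i : i \in \hat{J}\}$ with $\hat{J}$ a subinterval of $\{0,1,\ldots,n-2\}$, this says that $\hat{J}$ and $\hat{J}'$ are nested or disjoint, which unwinds to: $J \subseteq J'$, or $J' \subseteq J$, or $J \cap J' = \varnothing$ with a strict gap between them in $\{1,\ldots,n-2\}$ --- exactly the relation of being compatible tubes of $\Lambda_{n-2}$ (the non-edges being overlapping intervals and intervals that abut with no gap). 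To pass from edges to all simplices: given pairwise compatible tubes $J_1,\ldots,J_{k+1}$, their inclusion-maximal members are, by compatibility, pairwise disjoint with gaps, so one may take disjoint regular neighborhoods of those and recurse inside each, producing neighborhoods $N(\upsilon_{J_1}),\ldots,N(\upsilon_{J_{k+1}})$ that are pairwise nested or disjoint; then the $x_{J_i}$ are simultaneously disjoint and span a simplex of $X$, while conversely a simplex of $X$ is pairwise disjoint hence pairwise compatible. (Alternatively, one invokes that $\CC$ is a flag complex to promote the edge bijection directly.) This exhibits the simplicial isomorphism.

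I expect the main obstacle to be bookkeeping rather than any single hard step: one must correctly identify the combinatorial content of ``compatible tubes of a path'' with the non-crossing relation on the shifted intervals $\hat{J}$ (in particular pinning down that abutting intervals with no gap are \emph{not} compatible, matching the fact that the corresponding curves must cross at the shared endpoint vertex), and one must cleanly import the two surface-topological inputs --- the non-crossing-partition criterion for disjointness of curves in a punctured sphere, and the simultaneous realizability of pairwise disjoint members of the family $\{x_J\}$. Once those are in hand the identification is forced.
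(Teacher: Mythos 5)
Your proposal is correct and takes the same approach as the paper: the vertex correspondence $J \mapsto x_J$ together with the observation that $x_J$ and $x_{J'}$ are disjoint exactly when $J$ and $J'$ are compatible tubes of $\Lambda_{n-2}$. The paper simply asserts the compatibility--disjointness equivalence without proof and packages the isomorphism via the linear-interpolation formalism of $\S$\ref{sec:simpmap} (using Remark~\ref{rem:simp} for simpliciality), whereas you fill in that same equivalence with a complete argument via the non-crossing-partition criterion for curves on a punctured sphere and the shifted intervals $\hat{J}$.
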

\begin{proof}
Let $\Lambda_{n-2}$ be the graph in  Figure~\ref{fig:pathonly}.  If $J,J' \subsetneq \{ 1, 2, \cdots, n-2 \}$ are two non-empty intervals then the boundary curves $x_{J}$ and $x_{J'}$ form a curve system exactly when $J$ and $J'$ are compatible tubes for the graph $\Lambda_{n-2}$.  
We caution the reader not to confuse the graph $\Lambda_{n-2}$ which has a {\em vertex} for each arc in the arc system $\upsilon$ and the underlying graph $G(\upsilon)$ for the arc system $\upsilon$ from $\S$\ref{sec:surfaces of genus 0} in which each arc of $\upsilon$ is an {\em edge}.

The linear interpolation $f:D_{n-1} \to X$ of the map sending the tube $J \in D_{n-1}^0$ of  $\Lambda_{n-2}$ to the curve $x_J \in X$ is a simplicial map by Remark~\ref{rem:simp} and its inverse is the linear interpolation of the function sending the curve $x_J$ to the maximal subinterval $J' \subset \{1, 2, \cdots, n-2 \}$ such that the arc system $\{y_j|j \in J'\}$ is entirely contained in the component of $\surface_0^n - x_J$ which does not contain the marked point $p_\upsilon$.
\end{proof}

\begin{lemma}
\label{lem:YtoX}
The map $\rho$ is a simplicial map, $\rho(Y) = X$ and $\rho: Y \to X$ is a homotopy equivalence. 
\end{lemma}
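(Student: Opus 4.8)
The plan is to verify each of the three assertions --- that $\rho$ is simplicial, that its image is exactly $X$, and that it is a homotopy equivalence --- in turn, leaning on the general machinery of Section~\ref{sec:simpmap} and the combinatorial identification of $X$ with $D_{n-1}$ in Lemma~\ref{lem:Xassoc}.

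First I would show $\rho$ is simplicial. By Remark~\ref{rem:simp} it suffices to check that $|\supp \rho(v)| = 1$ for each vertex $v$ of $Y$; but $\rho^0(\alpha)$ was defined to be the \emph{unique} boundary curve of the left-most component $\alpha_\ast$ of $\alpha$ (this component is a sub-path of the linear arc system $\upsilon$, so its regular neighborhood has exactly one essential boundary curve in $\surface_0^n$), so this is immediate once one checks that $\rho$ is indeed a linear interpolation, i.e.\ that $\rho^0$ is interpolable --- which was already established via Corollary~\ref{cor:drop}. Second, for the image, I would argue that $\rho^0$ maps $Y^0$ onto $X^0$: a vertex $\alpha \subsetneq \upsilon$ of $Y$ is a sub arc system of the path $\upsilon$, its left-most component is an interval arc system $\{y_j, \ldots, y_m\}$, and its unique boundary curve is exactly $x_J$ with $J = \{j, \ldots, m\}$; conversely every interval $J$ arises this way (take $\alpha = \{y_j,\ldots,y_m\}$ itself). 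Since $\rho$ is simplicial and carries a simplex of $Y$ (a flag $\alpha_1 \subsetneq \cdots \subsetneq \alpha_k$) to the curve system $\{\rho^0(\alpha_1), \ldots, \rho^0(\alpha_k)\}$, whose curves are pairwise disjoint and hence span a simplex of $X$, we get $\rho(Y) \subseteq X$; and surjectivity onto $X^0$ plus the fact that $X = \Span X^0$ is a flag complex (Lemma~\ref{lem:Xassoc} identifies it with $D_{n-1} = \mathscr{D}(\Lambda_{n-2})$, which is flag by construction) gives $\rho(Y) = X$ provided every simplex of $X$ is hit. For the latter I would observe that any pairwise-compatible collection of intervals $J_1, \ldots, J_k$ can be realized by a single flag of sub arc systems of $\upsilon$ whose left-most components are exactly the $\{y_j : j \in J_i\}$, so the corresponding simplex of $Y$ maps onto that simplex of $X$.

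Third, for the homotopy equivalence: $Y = \bs\theta^{n-4}_\upsilon$ is homeomorphic, via the barycentric subdivision homeomorphism, to $\theta^{n-4}_\upsilon$, the boundary of an $(n-3)$-simplex, hence to $S^{n-4}$; and $X \cong D_{n-1}$, which by Theorem~\ref{thm:assoc} is the simplicial complex dual to $\partial K_{n-1}$ and hence also homeomorphic to $S^{(n-1)-3} = S^{n-4}$. So both are spheres of the same dimension. It then suffices to check $\rho: Y \to X$ induces an isomorphism on $\widetilde{\HH}_{n-4}$ (or, equivalently, has degree $\pm 1$). I would do this by noting that $\rho$ is a simplicial map between two PL $(n-4)$-spheres which is a bijection on vertices --- indeed $\rho^0: Y^0 \to X^0$ is injective because distinct sub arc systems of the path $\upsilon$ with the same left-most component... here I must be slightly careful, since $\rho^0$ need \emph{not} be injective on all of $Y^0$ (many $\alpha$'s share a left-most component). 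The cleaner route, and the one I expect to be the main obstacle, is to identify $\rho$ combinatorially with an explicit simplicial retraction: compose the homeomorphism $Y \cong \partial\Delta^{n-3}$ with the standard map $\partial\Delta^{n-3} \to D_{n-1}$ realizing the duality of Theorem~\ref{thm:assoc}/Lemma~\ref{lem:Xassoc}, and check these agree. Alternatively --- and this is probably the path of least resistance --- I would apply Corollary~\ref{cor:drop}: $\rho$ is homotopic to $\varphi = \Phi \circ i$, and $\varphi$ is the restriction to the sphere $Y$ of Harer's homotopy equivalence $\Phi$; since the preceding discussion already established that $\Phi(\bs\theta^{n-4}_\upsilon) = \varphi(Y)$ carries a $\MCG$-module generator of $\widetilde{\HH}_{n-4}(\CC)$ and in particular $\varphi_\ast$ is nonzero on $\widetilde{\HH}_{n-4}(Y) \cong \Z$, the induced map $\rho_\ast = \varphi_\ast: \widetilde{\HH}_{n-4}(Y) \to \widetilde{\HH}_{n-4}(X)$ is injective; as $X$ is itself an $(n-4)$-sphere with $\widetilde{\HH}_{n-4}(X) \cong \Z$, an injective endomorphism-up-to-identification $\Z \to \Z$ whose image is a direct summand (it is, being the restriction of a map landing in the free module $\widetilde{\HH}_{n-4}(\CC)$ and generating a summand there) must be an isomorphism. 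By Whitehead's theorem, a map between spheres of the same dimension inducing a homology isomorphism is a homotopy equivalence, so $\rho: Y \to X$ is a homotopy equivalence.

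The main obstacle, then, is bookkeeping around surjectivity of $\rho$ onto all simplices of $X$ (not just vertices) and pinning down that $\rho_\ast$ is an isomorphism rather than merely injective; both are handled by combining the combinatorial description of $X$ as the flag complex $D_{n-1}$ with the fact, already in hand from the paragraph preceding Lemma~\ref{lem:YtoX}, that $\varphi = \Phi\circ i$ represents a homology generator. I would present the surjectivity argument explicitly (every compatible tube collection lifts to a flag in $Y$) and then invoke Corollary~\ref{cor:drop} and Whitehead to finish.
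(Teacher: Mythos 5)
Your proof is organized around a genuinely different argument for the homotopy-equivalence claim than the paper's. The paper computes the degree of $\rho$ directly and inductively: after noting that the curve $x_J$ with $J = \{1,\ldots,n-3\}$ has a single preimage, it identifies the induced map on links with the map $\rho$ for $\surface_0^{n-1}$ and reduces by induction to the base case $\surface_0^4$. Surjectivity is then a free consequence of degree $\pm 1$. You instead argue ``from above'': the composite $(i_X)_\ast \circ \rho_\ast = \varphi_\ast$ sends a generator of $\widetilde{\HH}_{n-4}(Y)\cong\Z$ to a nontrivial $\MCG$-module generator of the free abelian group $\widetilde{\HH}_{n-4}(\CC)$, and you deduce from this that $\rho_\ast$ must be $\pm 1$. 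Both routes work. The paper's is self-contained and gives surjectivity and the degree simultaneously; yours leans on Proposition~\ref{prop:gen0} and the discussion preceding Lemma~\ref{lem:Xassoc}, which is legitimate since those come earlier, but it makes the lemma less portable.

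Two points in your argument need to be tightened or they remain gaps. First, the phrase ``whose image is a direct summand (\ldots being the restriction of a map landing in the free module \ldots and generating a summand there)'' elides the actual reason: the element $\varphi_\ast(1)$ is nontrivial and generates $\St(\surface_0^n)$ as a $\MCG$-module, and in a nonzero free abelian group a $\MCG$-module generator must be \emph{primitive} (if it were $k\cdot w$ with $|k|>1$, its entire $\Z[\MCG]$-span would lie in $k\cdot\St$, a proper subgroup). Writing $\rho_\ast(1) = d$, we get that $d\cdot (i_X)_\ast(1)$ is primitive, which forces $d = \pm 1$. This is a short argument but it is the crux and should be made explicit; as written, ``generating a summand there'' is asserted rather than derived. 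Second, your direct surjectivity argument---that every pairwise-compatible collection of intervals $J_1,\ldots,J_k$ lifts to a flag $\alpha_1\subsetneq\cdots\subsetneq\alpha_k$ in $Y$ with $\rho^0(\alpha_i)=x_{J_{\sigma(i)}}$---is true but nontrivial: one must order the intervals by the relation ``$J\prec J'$ iff $J\subsetneq J'$ or $J$ lies strictly to the right of $J'$'', check this is a total order on a compatible family, set $\alpha_i=\bigcup_{m\le i}\{y_j : j\in J_{\sigma(m)}\}$, and verify that each $\alpha_i$ is proper, that the inclusions are strict, and that the leftmost component of $\alpha_i$ is exactly $J_{\sigma(i)}$ (the last two use compatibility to rule out earlier intervals covering $J_{\sigma(i+1)}$ or abutting its right endpoint). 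None of this is done in your sketch. Moreover, the effort is redundant: once you know $\rho$ has degree $\pm 1$ between $(n-4)$-spheres, surjectivity is automatic, which is exactly how the paper gets it. Finally, a small caution: Whitehead's homology version requires simple connectivity, which fails for $S^1$ (the case $n=5$); the correct statement to cite is simply that a degree $\pm 1$ self-map of $S^m$ is a homotopy equivalence for all $m\ge 0$.
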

\begin{proof}
The map $\rho$ is a simplicial map onto its image by Remark~\ref{rem:simp} and the observation that for each vertex $\alpha$ of $Y$ we have
\[
|\supp \rho(\alpha)| = |\{ \rho^0(\alpha) \}| = 1.
\]

If $\alpha$ is a vertex of $Y$ then $\rho(\alpha)$ is a vertex of $X$.  The map $\rho$ is simplicial so $\rho(Y) \subset \Span X^0 = X$.

Observe that $Y$ is topologically an $(n-4)$-sphere and by Lemma~\ref{lem:Xassoc} that $X$ is topologically an $(n-4)$-sphere.  
Hence the map $\rho : Y \to X$ will be a homotopy equivalence if and only if it is a degree $\pm 1$ map.  
Let $J = \{1,2, \cdots, n-3 \}$ then  $\rho^{-1}( x_{J}) = \{\alpha\}$ where $\alpha = \{y_1, y_2, \cdots, y_{n-3} \}$.  
Thus the degree of $\rho$ is equal to the degree of the induced map $\rho^\ast : \link(\alpha,Y) \to \link( x_{J},X)$ where $\link(w,W)$ denotes the link of the vertex $w$ in the complex $W$.  
The map $\rho^\ast$ is exactly the map $\rho$ for the surface with one less puncture.  
For the base case surface $\surface_0^4$ the map $\rho:Y \to X$ is a homeomorphism of $0$-spheres which has degree $\pm 1$.  
Hence, inductively, for all $n \geq 4$ the map $\rho:Y \to X$ has degree $\pm 1$ and is therefore a homotopy equivalence.  It follows that $\rho : Y \to X$ is surjective.
\end{proof}

We have arrived at the following simplification of our $\MCG(\surface_0^n)$-module generator for $\widetilde{\HH}_{n-4}(\CC;\Z)$.

\begin{proposition}
\label{prop:simpsphere}
Assume $n \geq 4$. Let $X = X(\surface_0^n) \subset \CC(\surface_0^n)$ be the subcomplex of all simplices of $\CC$ whose vertices are boundary curves of connected sub arc systems of the arc system $\upsilon$ in Figure~\ref{fig:path}.  
Then as a simplicial complex $X$ is the dual $D_{n-1}$ of the boundary of the associahedron $K_{n-1}$ (see Theorem~\ref{thm:assoc}) and with a proper choice of orientation the class $[X] \in  \widetilde{\HH}_{n-4}(\CC;\Z)$ is the same as that of $\theta_\upsilon$ from Proposition~\ref{prop:gen0} and hence is a non-trivial generator for $\widetilde{\HH}_{n-4}(\CC;\Z)$ as a $\MCG(\surface_0^n)$-module.
\end{proposition}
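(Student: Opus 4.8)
The plan is to concatenate the ingredients already assembled; essentially all of the real work sits in Lemmas~\ref{lem:Xassoc} and~\ref{lem:YtoX} and Corollary~\ref{cor:drop}. The first assertion --- that $X$ is isomorphic as a simplicial complex to the dual $D_{n-1}$ of the boundary of the associahedron $K_{n-1}$ --- is exactly Lemma~\ref{lem:Xassoc} combined with Theorem~\ref{thm:assoc}; in particular $X$ is a simplicial $(n-4)$-sphere, so it carries a fundamental class $[X]\in\widetilde{\HH}_{n-4}(X;\Z)\cong\Z$, well defined up to sign once an orientation is chosen, and it is the image of this class under the inclusion $X\hookrightarrow\CC$ that is meant in the statement.

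For the homological assertion I would recall from the discussion preceding the proposition that the composite of the connecting homomorphism $\partial\colon\HH_{n-3}(\AC,\AC_\infty;\Z)\to\widetilde{\HH}_{n-4}(\AC_\infty;\Z)$, the barycentric subdivision isomorphism $\bs$, and $\Phi_\ast$ (Known Result~\ref{fact:accc}) carries $[\theta_\upsilon]$ to $[\Phi(\bs\theta_\upsilon^{n-4})]$, and that this is a non-trivial $\MCG(\surface_0^n)$-module generator of $\widetilde{\HH}_{n-4}(\CC;\Z)$; here $\partial$ is an isomorphism by~(\ref{eq:sphhom}) with $\tau=n-4$ (equivalently, by the long exact sequence of $(\AC,\AC_\infty)$ and contractibility of $\AC$, Known Result~\ref{fact:con}), and $\theta_\upsilon$ is a single top simplex whose proper faces all lie in $\AC_\infty$, which is what makes its image under $\partial$ the fundamental class of the boundary sphere $\theta_\upsilon^{n-4}$, equivalently (after applying $\bs$) of $Y$. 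So it remains only to identify $[X]$ with $[\Phi(\bs\theta_\upsilon^{n-4})]$ up to sign.

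To do this I would invoke the two simplification lemmas. By construction $\rho^0(\alpha)\in\supp\varphi(\alpha)$ for each vertex $\alpha$ of $Y$, so Corollary~\ref{cor:drop} gives that $\rho$ is homotopic to $\varphi=\Phi\circ i$; hence $\rho_\ast[Y]=\varphi_\ast[Y]=[\Phi(\bs\theta_\upsilon^{n-4})]$ in $\widetilde{\HH}_{n-4}(\CC;\Z)$. By Lemma~\ref{lem:YtoX}, $\rho\colon Y\to X$ is a homotopy equivalence with $\rho(Y)=X$; being a homotopy equivalence between $(n-4)$-spheres it has degree $\pm1$, so $\rho_\ast[Y]=\pm[X]$. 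Choosing the orientation of $X$ so that the sign is $+$ yields $[X]=[\Phi(\bs\theta_\upsilon^{n-4})]$, which is the image of $[\theta_\upsilon]$ under the $\MCG(\surface_0^n)$-equivariant identification $\St(\surface_0^n)\cong\widetilde{\HH}_{n-4}(\CC;\Z)$; since $[\theta_\upsilon]$ is a non-trivial $\MCG(\surface_0^n)$-module generator of $\St(\surface_0^n)$ by Proposition~\ref{prop:gen0}, so is $[X]$, and the proposition follows.

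The argument is thus pure assembly, with no remaining obstacle; the one point that needs minimal care --- matching the fundamental class of the sphere $X$ with $\rho_\ast[Y]$ up to a single sign, hence the phrase ``proper choice of orientation'' --- is precisely what the inductive degree computation inside the proof of Lemma~\ref{lem:YtoX} supplies. One could equally compress paragraph two to a single citation of the sentence preceding the proposition, which already records that $[\Phi(\bs\theta_\upsilon^{n-4})]$ is a non-trivial $\MCG(\surface_0^n)$-module generator, leaving only Corollary~\ref{cor:drop} and Lemma~\ref{lem:YtoX} to do the remaining work.
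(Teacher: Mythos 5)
Your proposal is correct and follows essentially the same route as the paper's own proof: both reduce the first assertion to Lemma~\ref{lem:Xassoc}, both identify $[X]$ with $\varphi_\ast[Y]$ via the homotopy $\rho \simeq \varphi$ from Corollary~\ref{cor:drop} together with the homotopy equivalence $\rho\colon Y \to X$ from Lemma~\ref{lem:YtoX}, and both appeal to the pre-proposition discussion of $\partial$, $\bs$ and $\Phi_\ast$ to recognize $\varphi_\ast[Y]$ as the image of $[\theta_\upsilon]$ and hence a non-trivial $\MCG(\surface_0^n)$-module generator. Your write-up merely spells out the degree-$\pm1$/orientation bookkeeping slightly more explicitly than the paper's one-paragraph version.
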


\begin{proof}
Lemma~\ref{lem:Xassoc} shows that $X$ and $D_{n-1}$ are isomorphic simplicial complexes.
The map $\varphi: Y \to \CC$ above represents a $\MCG(\surface_0^n)$-module generator for $\widetilde{\HH}_{n-4}(\CC;\Z)$.  As we noted $\rho: Y \to \CC$ is homotopic to $\varphi$ and thus represents the same class.  Let $i_X: X \to \CC$ be the inclusion map.  By Lemma~\ref{lem:YtoX} the map $\rho: Y \to X$ has a homotopy inverse $h: X \to Y$.  Thus $i_X$ is homotopic to $\rho \circ h$ which represents the same class as $\varphi$ in $\widetilde{\HH}_{n-4}(\CC;\Z)$.
\end{proof}

\subsection{The proof of Theorem~\ref{thm:same}}
\label{sec:al}

In \cite{AL} Aramayona and Leininger show that certain finite subcomplexes $\mathfrak{W} \subset \CC$ of the curve complex are {\em rigid sets}.  
That is, any injective simplicial map $f:\mathfrak{W} \to \CC$ extends to a simplicial injection $F:\CC \to \CC$ and hence is induced by a mapping class.

In the case that $\surface = \surface_0^n$ their finite rigid set $\mathfrak{X} = \mathfrak{X}(\surface_0^n)$ is a topological sphere of dimension $n-4$ and they ask \cite[Question 2]{AL} if the class $[\mathfrak{X}] \in \widetilde{\HH}_{n-4}(\CC (\surface_0^n))$ is non-trivial.  
The construction of the set $\mathfrak{X} \subset \CC (\surface_0^n)$ is as follows:  Let $G_n$ be a regular $n$-gon and let $\surface_0^n$ be the double of $G_n$ along its boundary with each vertex of $G_n$ giving a marked point.  
For each line segment $w$ joining two nonadjacent sides of $G_n$ the double $c_w$ of $w$ will be an essential curve in $\surface_0^n$.  
Let $\mathfrak{X}^0$ be the set of all $\binom{n}{2} - n$ essential curves in $\surface_0^n$ which may be constructed in this manner.  Define $\mathfrak{X} = \Span \mathfrak{X}^0$ to be the union of all simplices of $\CC(\surface_0^n)$ whose vertices all lie in the set $\mathfrak{X}^0$.

The reader is referred to $\S$\ref{sec:intro} of this paper for the statement of Theorem~\ref{thm:same}.  Using Proposition~\ref{prop:simpsphere},  we are now ready to give the proof.

\begin{proof}[Proof of Theorem~\ref{thm:same}]
Assume that $n \geq 5$.  Arrange the arc system in Figure~\ref{fig:path} so that it is entirely contained in the image of the boundary of the $n$-gon $G_n$ in $\surface_0^n$.  
Arrange the arcs in Figure~\ref{fig:path} so that they coincide with all but two adjacent edges of $G_n$.  Let $p_\upsilon$ be the marked point of $\surface_0^n$ which is not incident with any arcs.  
If $w$ is a line segment joining two nonadjacent sides of $G_n$ then its double $c_w$ will be the boundary curve for the maximal connected sub arc system of the arc system  in Figure~\ref{fig:path} which is disjoint from $c_w$ and which is in the component of $\surface_0^n  -  c_w$ which does not contain the marked point $p_\upsilon$.  
This pairing identifies the vertices of $\mathfrak{X}$ and $X$.  Since both subcomplexes are the full subcomplexes of $\CC (\surface_0^n)$ spanned by their vertices the subcomplexes must coincide.
\end{proof}
We address the case of the surface $\surface_0^4$  in $\S$\ref{SS:sporadic} below.

\section{Essential spheres and finite rigid sets for the surfaces $\surface^0_g$ and $\surface^1_g$}
\label{sec:genusg}

We now turn our attention to the surface $\surface_g^n$ where $g \geq 1$ and $n \in \{ 0,1 \}$.  Our main interest is in the case $n=0$, but in the process of investigating that we will need to study the case $n=1$.  
To get started, we review what was done in \cite{Broaddus} to identify the homologically non-trivial sphere in $\CC(\surface_g^n)$ described there.  
After that, we use the techniques of $\S$\ref{sec:simpmap} to radically simplify this sphere.  Finally in $\S$\ref{sec:gcomp} we compare this simplified sphere to the finite rigid sets for $\surface_g^n$ given in \cite{AL}.

\subsection{An essential sphere in the curve complex of a surface of genus $g>0$}
\label{sec:ggen}

Consider the surface of genus $g$ with $1$ marked point depicted in Figure~\ref{fig:salgen}.  Let  $\theta_\zeta \subset \AC(\surface_g^1)$ be the $(2g-1)$-simplex corresponding to the filling arc system $\zeta$ given in Figure~\ref{fig:salgen}. 
In \cite{Broaddus} it is shown that the class $[\theta_\zeta]$ generates 
\[
\St(\surface_g^1) \cong \frac{\CH_{2g-1}(\AC)}{\CH_{2g-1}(\AC_\infty) \oplus \partial \CH_{2g}(\AC)}
\]
as a $\MCG(\surface_g^1)$-module.
\begin{figure}[htbp]
\begin{center}
%\resizebox{\textwidth}{!}{%
%\framebox{
\includegraphics{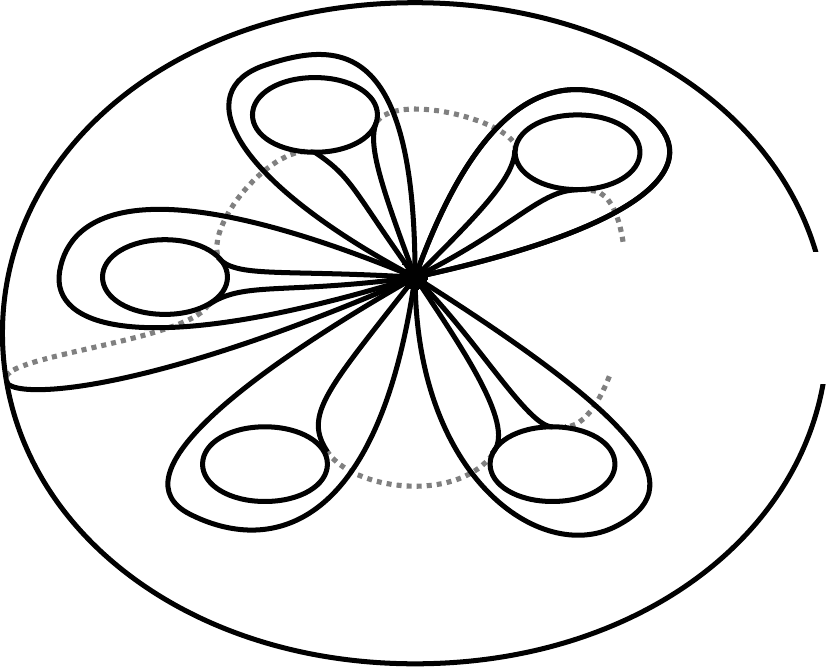}
\put(-64,97){\rotatebox{-5}{\Huge$\vdots$}}
\put(-8,97){\rotatebox{1}{\Huge$\vdots$}}
\put(-219,72){$z_1$}
\put(-215,135){$z_2$}
\put(-170,118){$z_3$}
\put(-187,160){$z_4$}
\put(-145,156){$z_5$}
\put(-48,156){$z_6$}
\put(-78,142){$z_7$}
\put(-92,60){$z_{2g-3}$}
\put(-80,30){$z_{2g-2}$}
\put(-173,57){$z_{2g-1}$}
\put(-190,34){$z_{2g}$}
%}
\end{center}
\caption{A filling arc system $\zeta = \{ z_1, z_2, \cdots, z_{2g} \}$ with  $2g$ arcs in the surface $\surface_g^1$ whose class generates $\St (\surface_g^1)$ as a $\MCG(\surface_g^1)$-module.\label{fig:salgen}}
\end{figure}
As was the case in Proposition~\ref{prop:gen0} this class in $\St (\surface_g^1)$ must be non-trivial.

Now consider the surface $\surface_g^0$ without marked points.  Here we have no arc complex, but by a result of Harer \cite[Lemma 3.6]{Harer1986} following from more general work by Kent, Leininger and Schleimer \cite[Corollary 1.1]{KLS} the map
\begin{equation}
\label{eq:psi}
\Psi : \CC(\surface_g^1) \to \CC(\surface_g^0)
\end{equation}
which ``forgets'' the marked point is a homotopy equivalence.  Hence we may identify $\St (\surface_g^1)$ and $\St (\surface_g^0)$ via the induced map
\[
\Psi_\ast :  \widetilde{\HH}_{2g-2}(\CC(\surface_g^1);\Z) \to \widetilde{\HH}_{2g-2}(\CC(\surface_g^0);\Z)
\]
for all $g \geq 1$.

To continue,  suppose that $g \geq 1$ and $n=1$. 
Let $Z \subset \bs \AC_\infty(\surface_g^1) $ be the flag complex whose vertices correspond to non-empty proper sub arc systems of the filling arc system $\zeta$ in Figure~\ref{fig:salgen} and whose $k$-simplices correspond to increasing flags of such sub arc systems of length $k+1$.
The simplicial complex $Z$ is the barycentric subdivision of the boundary of a $(2g-1)$-simplex and hence topologically a sphere of dimension $2g-2$.  Again we let $i: Z \to \bs \AC_\infty$ be the inclusion map and set $\varphi = \Phi \circ i$ where $\Phi: \bs \AC_\infty \to \CC$ is as in (\ref{eqn:phi}).

The {\em sequential components} of a sub arc system $\alpha$ of the arc system $\zeta$ in Figure~\ref{fig:salgen} will be the maximal subsets of $\alpha$ of the form $\{z_j, z_{j+1}, \cdots, z_{k} \}$ where $1 \leq j \leq k \leq 2g$.
Note that $\alpha$ is the disjoint union of its sequential components.  
If $\alpha_1,\alpha_2 \subset \alpha$ are sequential components of $\alpha$ then we say that $\alpha_1$ {\em precedes} $\alpha_2$ if for all $z_j \in \alpha_1$ and all $z_k \in \alpha_2$ we have $j < k$.   
Under this order we will refer to the {\em first}, {\em second}, {\em last}, {\em etc.} sequential component of $\alpha$.  
The order on the sequential components of a sub arc system $\alpha \subsetneq \zeta$ should not be confused with the containment partial order on all arc systems under which the sequential components of $\alpha$ are incomparable. 
A sequential component will be called {\em odd} if it has an odd number of elements and {\em even} if it has an even number of elements.

The first step in the simplification of $\varphi : Z \to\CC(\surface_{g}^1)$ will be to define a map $\rho : Z \to\CC(\surface_{g}^1)$ which will prove to be homotopic to $\varphi$. (See Lemma~\ref{lem:hom} below). Let $Z^0$ be the $0$-skeleton of $Z$.  
Define the function $\rho^0 : Z^0 \to \CC(\surface_{g}^1)$ as follows. Let $\alpha \in Z^0$ be a proper sub arc system of $\zeta$ and let $\alpha^\star = \{ z_j, z_{j+1}, \cdots, z_{k} \}$ be the first sequential component of $\alpha$.  
Let $N(\alpha^\star)$ be a closed regular neighborhood of $\bigcup \alpha^\star$ in $\surface_g^1$.  If $\alpha^\star$ is an odd sequential component set $N'(\alpha) = N(\alpha^\star)$ and let $\rho^0(\alpha)$ be the boundary component of $N'(\alpha)$ which is to the right when following $z_j$ in the clockwise direction. (See sketch (i) of Figure~\ref{fig:oesc}). 
\begin{figure}[htbp]
\begin{center}
\begin{tabular}{cc}
\raisebox{.4cm}{
\includegraphics{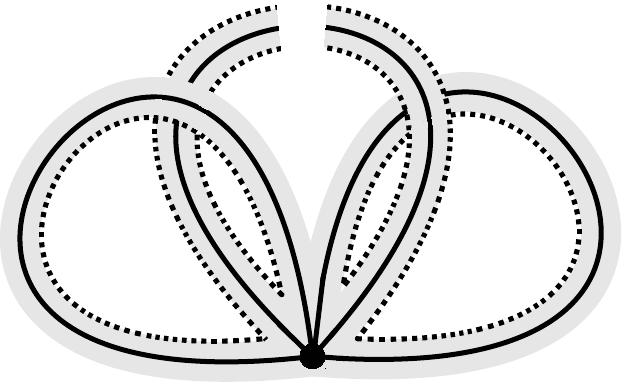}
\put(-99,99){$\cdots$}
\put(-170,77){$z_j$}
\put(-140,102){$z_{j+1}$}
\put(-60,102){$z_{j+2m-1}$}
\put(-34,86){$z_{j+2m}$}
\put(-163,35){$\rho^0(\alpha)$}
}
&
\includegraphics{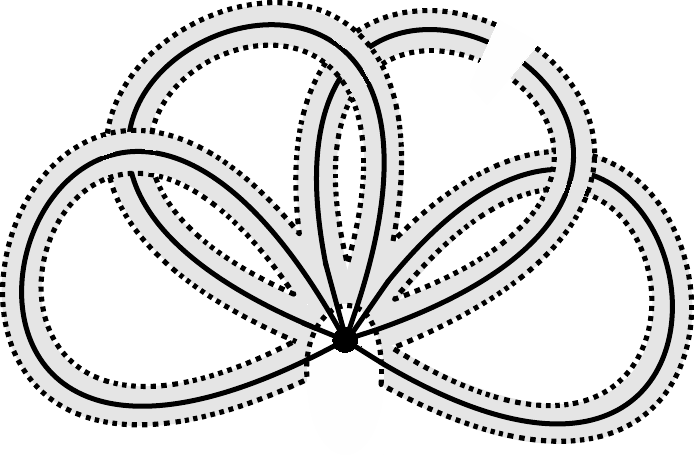}
\put(-64.5,111.5){\rotatebox{60}{$\vdots$}}
\put(-192,87){$z_j$}
\put(-178,115){$z_{j+1}$}
\put(-135,92){$z_{j+2}$}
\put(-30,102){$z_{j+2m}$}
\put(-54,26){$z_{j+2m+1}$}
\put(-182,35){$\rho^0(\alpha)$}
\\
(i) $\alpha^\star$ is an odd sequential component
&
(ii) $\alpha^\star$ is an even sequential component
\\
\end{tabular}
\end{center}
\caption{Let $\alpha^\star$ be the first sequential component of the arc system $\alpha$.  
Sketch (i) depicts the case where $\alpha^\star = \{ z_j, z_{j+1}, \cdots, z_{j+2m} \}$ is odd and sketch (ii) depicts the case where $\alpha^\star = \{ z_j, z_{j+1}, \cdots, z_{j+2m+1} \}$ is even.  
In both cases $N'(\alpha)$ is the grey region and $\rho^0(\alpha)$ is the dotted line.\label{fig:oesc}}
\end{figure}

If $\alpha^\star = \{ z_j, z_{j+1}, \cdots, z_{k} \}$ is an even sequential component let $N'(\alpha) = N(\alpha^\star) - \inte D$ where $D$ is a closed regular neighborhood  of the marked point in $\surface_{g}^1$ whose intersection with the boundary curve  
of $N(\alpha^\star)$ is a single line segment and which contains $N(\alpha^\star) \cap z_i$ for each $i$ satisfying $i \leq j-2$ or $i \geq k+2$. Let $\rho^0(\alpha)$ be the unique boundary component of $N'(\alpha)$. (See sketch (ii) of Figure~\ref{fig:oesc}).

\begin{lemma}
\label{lem:self}
If $\alpha \subset \beta$ are two proper sub arc systems of the arc system $\zeta$ in Figure~\ref{fig:salgen} then $\{ \rho^0(\alpha),\rho^0(\beta) \}$ is a curve system.
\end{lemma}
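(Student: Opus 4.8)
The plan is to show that $\rho^0(\alpha)$ and $\rho^0(\beta)$ can be realized disjointly by exhibiting a single regular neighborhood in $\surface_g^1$ that contains representatives of both. Since $\alpha \subset \beta$, the first sequential component $\alpha^\star = \{z_j, \dots, z_k\}$ of $\alpha$ is contained in the first sequential component $\beta^\star = \{z_{j'},\dots,z_{k'}\}$ of $\beta$; in fact, because the $z_i$ are indexed consecutively along the chain, $\alpha^\star \subseteq \beta^\star$ forces $j' \leq j \leq k \leq k'$. So I would fix, once and for all, a nested pair of closed regular neighborhoods $N(\alpha^\star) \subset \inte N(\beta^\star)$, together with a single disk $D$ around the marked point chosen compatibly with both of the (at most two) even-component cases, and argue that the curves $\rho^0(\alpha)$ and $\rho^0(\beta)$ defined in terms of these nested neighborhoods are disjoint.

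**Key steps.** First I would record the combinatorial containment $j' \leq j \leq k \leq k'$ and split into the four cases according to the parities of $|\alpha^\star|$ and $|\beta^\star|$. In the odd/odd case, $\rho^0(\alpha)$ is the "right-hand" boundary curve of $N(\alpha^\star)$ and $\rho^0(\beta)$ the corresponding boundary curve of $N(\beta^\star)$; since $N(\alpha^\star) \subset \inte N(\beta^\star)$ these two boundary curves are automatically disjoint (one lies in the interior of the other's neighborhood). When $\alpha^\star$ is even and $\beta^\star$ is even, I would choose $D$ so that it simultaneously realizes both even-component conditions — it must meet $\partial N(\alpha^\star)$ in a single arc, meet $\partial N(\beta^\star)$ in a single arc, and contain all the stray intersections $N(\beta^\star)\cap z_i$ for $i \leq j'-2$ or $i \geq k'+2$ (which in particular handles the $\alpha$-condition since those indices are a superset); then $\rho^0(\alpha) = \partial N'(\alpha)$ with $N'(\alpha) = N(\alpha^\star) - \inte D$ and $\rho^0(\beta) = \partial N'(\beta)$ with $N'(\beta) = N(\beta^\star) - \inte D$, and nestedness of $N(\alpha^\star)$ inside $N(\beta^\star)$ together with the shared $D$ makes $N'(\alpha) \subset N'(\beta)$ up to the puncture-disk, so the boundary curves are disjoint. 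The mixed cases (odd $\alpha^\star$, even $\beta^\star$, and even $\alpha^\star$, odd $\beta^\star$) are handled the same way: in each one $\rho^0(\alpha)$ is a boundary curve of a neighborhood built from $N(\alpha^\star)$ and $\rho^0(\beta)$ from a neighborhood built from $N(\beta^\star)$, and since the first neighborhood sits in the interior of the second the curves do not intersect. Finally I would check that $\rho^0(\alpha) \neq \rho^0(\beta)$ — they cobound a nontrivial subsurface (the genus and/or punctures picked up between $\alpha^\star$ and $\beta^\star$ are nonempty because $\alpha \subsetneq \beta$, hence $\alpha^\star$ and $\beta^\star$ differ, or else $\beta$ has a later sequential component) — so $\{\rho^0(\alpha),\rho^0(\beta)\}$ is genuinely a $1$-simplex and not a repeated vertex.

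**Main obstacle.** The routine part is the disjointness once the neighborhoods are nested; the delicate part is the bookkeeping for the puncture-disk $D$ in the even cases, making sure one can pick a single $D$ that works simultaneously for $\alpha$ and $\beta$ and that the resulting $\rho^0(\beta)$ really is isotopic to the curve originally defined using $\beta$'s own choice of disk. I expect the argument to reduce to the observation that the definition of $\rho^0$ is independent (up to isotopy) of the particular regular neighborhood and puncture-disk used, so that one is free to make compatible choices for the nested pair; stating and using this well-definedness carefully is where the real content lies, but it is straightforward since any two such choices differ by an ambient isotopy fixing the arcs of $\zeta$.
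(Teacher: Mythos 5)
There is a genuine gap. Your opening reduction — that $\alpha \subset \beta$ forces the first sequential component $\alpha^\star$ of $\alpha$ to lie inside the first sequential component $\beta^\star$ of $\beta$ — is false, and the rest of the argument is built on it. A simple counterexample: take $\alpha = \{z_3\}$ and $\beta = \{z_1, z_3\}$. Then $\alpha \subset \beta$, but $\alpha^\star = \{z_3\}$ while $\beta^\star = \{z_1\}$, so $\alpha^\star \not\subset \beta^\star$. In general, since $\alpha \subset \beta$, the block $\alpha^\star$ lands inside \emph{some} sequential component of $\beta$, but that need not be the first one; $\beta$ can pick up an entirely earlier block not present in $\alpha$.

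The correct dichotomy, used in the paper's proof, is: either $\alpha^\star \subset \beta^\star$, or $\alpha^\star$ and $\beta^\star$ are disjoint and non-adjacent with $\beta^\star$ preceding $\alpha^\star$. Your four parity sub-cases handle only the first branch (and there your treatment is essentially that of the paper, nesting $N'(\alpha) \subset \inte N'(\beta)$, with the mixed even/odd case requiring the disk $D$ to be chosen compatibly). The second, disjoint branch is not a bookkeeping variant of the first: when both $\alpha^\star$ and $\beta^\star$ are odd and disjoint, the two right-hand boundary curves $\rho^0(\alpha)$ and $\rho^0(\beta)$ both run past the marked point, and the relevant observation is that one can arrange the two regions $N'(\alpha)$ and $N'(\beta)$ to overlap precisely in a bigon containing the marked point that both curves avoid. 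That argument is different in kind from the nested-neighborhood argument and cannot be obtained by the containment $j' \leq j \leq k \leq k'$, which does not hold here. You should restore the missing branch, split it by parity, and supply the bigon argument for the odd/odd sub-case.
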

\begin{proof}
Let $\alpha \subset \beta$ be two proper sub arc systems of the arc system $\zeta$ in Figure~\ref{fig:salgen} with respective first sequential components $\alpha^\star$ and $\beta^\star$.
Then either $\alpha^\star \subset \beta^\star$ or $\alpha^\star \cup \beta^\star$ has two sequential components with $\beta^\star$ preceding $\alpha^\star$.

Suppose $\alpha^\star \subset \beta^\star$.  If $\beta^\star$ is an odd sequential component or both $\alpha^\star$ and $\beta^\star$ are even sequential components then we may choose the closed regions $N'(\alpha)$ and $N'(\beta)$ so that  $N'(\alpha) \subset \inte N'(\beta)$. 
Hence the curves $\rho^0(\alpha)$ and $\rho^0(\beta)$ are disjoint and form a curve system.

Again suppose $\alpha^\star \subset \beta^\star$. If $\beta^\star$ is an even sequential component and $\alpha^\star$ is an odd sequential component then we may choose the closed region $N'(\alpha)$ and a closed regular neighborhood $N(\beta^\star)$ of $\bigcup \beta^\star$ so that $N'(\alpha) \subset \inte N(\beta^\star)$. 
We may choose the closed disk neighborhood $D$ of the marked point so that it is disjoint from the boundary curve $\rho^0(\alpha)$ of $N'(\alpha)$ and set $N'(\beta) = N(\beta^\ast) - \inte D$.  We again see that the curves $\rho^0(\alpha)$ and $\rho^0(\beta)$ are disjoint and form a curve system.

Suppose $\alpha^\star \cup \beta^\star$ has two sequential components.  If at least one of $\alpha^\star$ and $\beta^\star$ is an even sequential component then we may arrange for the regions $N'(\alpha)$ and $N'(\beta)$ to be disjoint.  
Thus the curves $\rho^0(\alpha)$ and $\rho^0(\beta)$ are disjoint and form a curve system.
If both $\alpha^\star$ and $\beta^\star$ are odd sequential components then we may choose $N'(\alpha)$ and $N'(\beta)$ so that their intersection is a bigon containing the marked point which is disjoint from $\rho^0(\alpha)$ and $\rho^0(\beta)$.  Again we see that the curves $\rho^0(\alpha)$ and $\rho^0(\beta)$ are disjoint.
\end{proof}

Recall that a set of curves forms a curve system if and only if every subset of size two is a curve system.  Let $\sigma$ be a simplex of $Z$ with vertex set $\sigma^0$.  
Then every subset of $\rho^0(\sigma^0)$ of size two is of the form $\{ \rho^0(\alpha),\rho^0(\beta) \}$ for some arc systems $\alpha, \beta$ with $\alpha \subsetneq \beta \subsetneq \zeta$.  
By Lemma~\ref{lem:self} all such sets of size two are curve systems.  It follows that $\rho^0(\sigma^0)$ is a curve system and that $\rho^0$ is interpolable.
Let
\[
\rho: Z \to \CC(\surface_g^1)
\]
be its linear interpolation.

In Lemma~\ref{lem:hom} below we will construct an explicit homotopy between $\varphi: Z \to \CC(\surface_g^1)$ and $\rho: Z \to \CC(\surface_g^1)$.  
In order to do this we will need Lemma~\ref{lem:lower} below.

\begin{lemma}
\label{lem:lower}
If $\alpha \subset \beta$ are two proper sub arc systems of the arc system $\zeta$ in Figure~\ref{fig:salgen} then
\[
\supp \rho(\alpha) \cup \supp \varphi(\beta)
\]
is a curve system.
\end{lemma}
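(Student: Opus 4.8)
The plan is to reduce the claim to a statement comparable to Lemma~\ref{lem:self}, using the fact that for a vertex $\beta$ of $Z$ the support $\supp\varphi(\beta)$ is exactly the curve system $\gamma(\beta)$ of essential boundary curves of a regular neighborhood $N(\beta)$ of $\bigcup\beta$, while $\supp\rho(\alpha) = \{\rho^0(\alpha)\}$ is a single curve living on the boundary of a neighborhood $N'(\alpha)$ of the first sequential component $\alpha^\star$ of $\alpha$. Since a finite set of curves is a curve system iff every pair is, and since both $\supp\rho(\alpha)=\{\rho^0(\alpha)\}$ and $\supp\varphi(\beta)=\gamma(\beta)$ are themselves curve systems, it suffices to show that $\rho^0(\alpha)$ can be realized disjointly from every curve of $\gamma(\beta)$ simultaneously. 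So the whole statement comes down to: one can choose a neighborhood $N(\beta)$ of $\bigcup\beta$ and a neighborhood $N'(\alpha)$ of the first sequential component $\alpha^\star$ of $\alpha$ so that $\rho^0(\alpha)$ is disjoint from $\partial N(\beta)$.

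First I would set up notation exactly as in Lemma~\ref{lem:self}: write $\alpha^\star$ and $\beta^\star$ for the first sequential components of $\alpha$ and $\beta$, and observe that since $\alpha\subset\beta$ we have either $\alpha^\star\subset\beta^\star$, or $\alpha^\star$ and $\beta^\star$ lie in distinct sequential components of $\beta$ with $\beta^\star$ preceding $\alpha^\star$. I would then handle these two cases, and within each the parity subcases, just as in the proof of Lemma~\ref{lem:self}. The key point that makes everything work is that the neighborhoods can be nested or made disjoint \emph{flexibly}: in the case $\alpha^\star\subset\beta^\star$ with $\beta^\star$ odd (or both even), we may choose $N'(\alpha)\subset\inte N(\beta)$, so $\rho^0(\alpha)$ lies in the interior of $N(\beta)$ and is therefore disjoint from $\partial N(\beta)\supset\gamma(\beta)$; in the mixed-parity subcase one first arranges $N'(\alpha)\subset\inte N(\beta^\star)$ and then carves out the disk $D$ around the marked point away from $\rho^0(\alpha)$. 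In the case where $\alpha^\star$ and $\beta^\star$ are distinct sequential components of $\beta$, one arranges $N'(\alpha)$ to lie in the interior of the component of $N(\beta)$ containing $\bigcup\alpha^\star$ (shrinking $N'(\alpha)$ toward $\alpha^\star$), again placing $\rho^0(\alpha)$ off $\partial N(\beta)$; the only delicate sub-subcase, exactly as in Lemma~\ref{lem:self}, is when both relevant sequential components of $\beta$ around the marked point are odd, where one lets the intersection of the relevant neighborhoods be a bigon through the marked point disjoint from $\rho^0(\alpha)$.

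The main obstacle, and the only place requiring genuine care, is the bookkeeping of the neighborhood $D$ of the marked point that enters the definition of $\rho^0$ on even sequential components, together with the clause in that definition demanding $D$ contain $N(\alpha^\star)\cap z_i$ for the appropriate indices $i$: one must check that the choices of $D$ made for $\rho^0(\alpha)$ and those implicit in $N(\beta)$ (and in $\rho^0(\beta)$, though $\rho^0(\beta)$ itself is not what we compare against here) can be made mutually consistent, i.e.\ that no arc of $\beta$ is forced to cross $\rho^0(\alpha)$. This is where I would spend the most words, essentially re-running the disjointness argument of Lemma~\ref{lem:self} but comparing the single curve $\rho^0(\alpha)$ against the full boundary $\gamma(\beta)$ of a neighborhood of all of $\bigcup\beta$ rather than against another single curve. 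Once that is done, I would conclude: every pair of curves, one from $\supp\rho(\alpha)$ and one from $\supp\varphi(\beta)$, has disjoint representatives, hence $\supp\rho(\alpha)\cup\supp\varphi(\beta)$ is a curve system, as claimed. This lemma then feeds into the construction of the homotopy in Lemma~\ref{lem:hom} via Lemma~\ref{lem:quil}, since it exhibits, for each vertex, a simplex of $\CC$ containing both $\rho$- and $\varphi$-images.
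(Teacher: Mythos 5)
Your proposal is correct as far as it goes, and would eventually yield a valid proof, but it is substantially more elaborate than necessary and misses the one observation that makes the paper's proof a three-line argument with no case analysis at all. The paper's proof simply notes that $\bigcup\alpha^\star\subset\bigcup\alpha\subset\bigcup\beta$, so a regular neighborhood $N(\alpha^\star)$ can always be chosen inside $\inte N(\beta)$; and since $N'(\alpha)\subset N(\alpha^\star)$ in \emph{every} case (whether $\alpha^\star$ is odd or even, $N'(\alpha)$ is either $N(\alpha^\star)$ itself or $N(\alpha^\star)$ with a disk removed), we get $\rho^0(\alpha)\subset N'(\alpha)\subset\inte N(\beta)$. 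Meanwhile $\supp\varphi(\beta)$ lies on $\partial N(\beta)$, which is disjoint from $\inte N(\beta)$. Done. The point is that, unlike in Lemma~\ref{lem:self} where both curves came from first sequential components and parity mattered, here the asymmetry works entirely in your favor: $\varphi(\beta)$ uses a neighborhood of all of $\bigcup\beta$, not of $\beta^\star$, so $N'(\alpha)$ can always simply be nested inside $N(\beta)$ regardless of parities or how $\alpha^\star$ sits among the sequential components of $\beta$.

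Concretely, the machinery you import from Lemma~\ref{lem:self}---the split into $\alpha^\star\subset\beta^\star$ versus ``distinct sequential components,'' the parity subcases, the disk $D$ bookkeeping, and the bigon-through-the-marked-point trick---is all unnecessary here. In particular, your references to $N(\beta^\star)$ and to the disk $D$ used in defining $\rho^0(\beta)$ are artifacts of copying Lemma~\ref{lem:self}'s structure; $\rho^0(\beta)$ does not appear in this lemma's statement, so neither does $N(\beta^\star)$ nor that disk. (Also a small factual slip: in $\surface_g^1$ all arcs of $\zeta$ are loops based at the single marked point, so $\bigcup\beta$ is connected and $N(\beta)$ has only one component, making your phrase ``the component of $N(\beta)$ containing $\bigcup\alpha^\star$'' vacuous.) Your instinct to reduce to pairwise disjointness and realize curves on boundaries of nested neighborhoods is exactly right; the remaining gap is noticing that the nesting $N'(\alpha)\subset\inte N(\beta)$ is automatic and uniform, which collapses the whole case analysis.
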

\begin{proof}
Let $\alpha \subset \beta$ be two proper sub arc systems of the arc system $\zeta$.  Then we may choose a regular neighborhood $N(\beta)$ of $\bigcup \beta$ in $\surface_g^1$ and a closed region $N'(\alpha)$ so that $N'(\alpha) \subset \inte N(\beta)$.  
The curves in $\supp \varphi(\beta)$ may be chosen to be the pairwise disjoint boundary curves of $N(\beta)$ and the single curve in $\supp \rho(\alpha)$ may be taken to be the boundary curve of $N'(\alpha)$.  
Therefore the curves in $\supp \rho(\alpha) \cup \supp \varphi(\beta)$ are pairwise disjoint and form a curve system.
 \end{proof}

Now we have the tools to construct an explicit homotopy equivalence between $\varphi$ and $\rho$.
\begin{lemma}
\label{lem:hom}
The maps $\varphi: Z \to \CC(\surface_g^1)$ and $\rho: Z \to \CC(\surface_g^1)$ are homotopic.
\end{lemma}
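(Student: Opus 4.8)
The plan is to build an explicit homotopy by interpolating linearly between $\varphi$ and $\rho$ on the $0$-skeleton and extending, using the criterion of Lemma~\ref{lem:quil}. Concretely, I would verify the hypothesis of Lemma~\ref{lem:quil}: for each simplex $\sigma$ of $Z$, I must produce a single simplex $\eta$ of $\CC(\surface_g^1)$ containing both $\varphi(v)$ and $\rho(v)$ for every vertex $v$ of $\sigma$. Since a vertex $v$ of $\sigma$ is a proper sub arc system $\alpha_v \subsetneq \zeta$, the support of $\varphi(v)$ is the set of essential boundary curves of a regular neighborhood $N(\alpha_v)$ and the support of $\rho(v)$ is the single curve $\rho^0(\alpha_v)$. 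The candidate for $\eta$ is the simplex spanned by the curve system $\bigcup_v \bigl(\supp\varphi(v)\cup\supp\rho(v)\bigr)$, so the whole task reduces to checking this union is a curve system.

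Because a finite set of curves forms a curve system exactly when every two-element subset does, I would reduce to pairwise checks. A simplex $\sigma$ of $Z$ corresponds to a flag $\alpha_1\subsetneq\alpha_2\subsetneq\cdots\subsetneq\alpha_k\subsetneq\zeta$, so any two vertices $u,v$ of $\sigma$ satisfy (after relabeling) $\alpha_u\subseteq\alpha_v$. The pairwise disjointness statements then split into three types: (a) $\supp\varphi(\alpha_u)\cup\supp\varphi(\alpha_v)$ is a curve system — this is exactly the interpolability of $\Phi^0$ already established in the paragraph preceding Known Result~\ref{fact:accc}, since $N(\alpha_u)$ can be nested inside $\inte N(\alpha_v)$; (b) $\supp\rho(\alpha_u)\cup\supp\rho(\alpha_v)$ is a curve system — this is precisely Lemma~\ref{lem:self}; and (c) the mixed case $\supp\rho(\alpha_u)\cup\supp\varphi(\alpha_v)$ (and also $\supp\rho(\alpha_v)\cup\supp\varphi(\alpha_u)$, which is the same situation with the roles of the nested systems as in Lemma~\ref{lem:lower}) — these are exactly Lemma~\ref{lem:lower}. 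The only subtlety is that Lemma~\ref{lem:lower} as stated handles $\alpha\subset\beta$; one also needs the curves $\rho^0(\alpha_v)$ and the boundary curves of $N(\alpha_u)$ disjoint when $\alpha_u\subset\alpha_v$, but this follows the same way: choose $N(\alpha_u)\subset\inte N'(\alpha_v)$ when $\alpha_v^\star$ is odd, or fit $N(\alpha_u)$ inside $N(\alpha_v^\star)$ avoiding the disk $D$ when $\alpha_v^\star$ is even, so $\rho^0(\alpha_v)$ and $\supp\varphi(\alpha_u)$ are disjoint.

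Having assembled these pieces, the argument concludes as follows. Both $\varphi$ and $\rho$ are linear interpolations of interpolable maps $Y^0\to\CC$ (interpolability of $\varphi$ follows from that of $\Phi^0$; interpolability of $\rho^0$ was shown via Lemma~\ref{lem:self}). For every simplex $\sigma$ of $Z$ and every vertex $v$ of $\sigma$, the single simplex $\eta$ of $\CC(\surface_g^1)$ spanned by $\bigcup_{v\in\sigma^0}\bigl(\supp\varphi(v)\cup\supp\rho(v)\bigr)$ — which is a simplex by the pairwise disjointness just checked — contains both $\varphi(v)$ and $\rho(v)$. Hence the hypothesis of Lemma~\ref{lem:quil} is satisfied, so $\varphi$ and $\rho$ are homotopic, completing the proof.

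The main obstacle I anticipate is bookkeeping in case (c) above: one has to simultaneously realize the nesting $N(\alpha_u)\subset\inte N(\alpha_v)$ needed for disjointness of the $\varphi$-boundary curves, the modifications by deleting the disk $D$ around the marked point that define $\rho^0$ on even sequential components, and the requirement that $D$ avoid the relevant boundary curves — all with a single consistent choice of neighborhoods that works for every pair in the flag at once. Since the flag is totally ordered by inclusion, one can fix the neighborhoods inductively from the largest arc system down, which makes the simultaneous choice possible; still, carefully stating this so that Lemmas~\ref{lem:self} and~\ref{lem:lower} apply verbatim is where the care is needed. Everything else is a direct appeal to Lemma~\ref{lem:quil} and the already-proved lemmas.
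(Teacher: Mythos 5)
Your plan to apply Lemma~\ref{lem:quil} directly is close to the paper's strategy, but it asks for more than the paper proves and more than the lemmas in $\S$\ref{sec:ggen} actually supply, and your proposed fix does not work. To invoke Lemma~\ref{lem:quil} with $\eta$ spanned by $\bigcup_{v\in\sigma^0}\bigl(\supp\varphi(v)\cup\supp\rho(v)\bigr)$ you need, for every pair $\alpha_u\subsetneq\alpha_v$ in the flag, \emph{both} that $\supp\rho(\alpha_u)\cup\supp\varphi(\alpha_v)$ is a curve system (Lemma~\ref{lem:lower}) and that $\supp\rho(\alpha_v)\cup\supp\varphi(\alpha_u)$ is a curve system. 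The second claim is not among the proved lemmas, and your sketched argument for it is incorrect: you cannot ``choose $N(\alpha_u)\subset\inte N'(\alpha_v)$,'' because $N'(\alpha_v)$ is a (possibly modified) regular neighborhood of only the \emph{first sequential component} $\alpha_v^\star$ of $\alpha_v$, while $\alpha_u$, being merely a subset of $\alpha_v$, may contain arcs lying entirely outside $\alpha_v^\star$. For example with $\alpha_v=\{z_1,z_3\}$ and $\alpha_u=\{z_3\}$, a regular neighborhood of $z_3$ cannot be made to sit inside a small regular neighborhood of $z_1$. So you are missing a genuine additional claim, and the argument you offer for it fails. (The claim itself may or may not be true; settling it would require a fresh geometric argument about the cyclic arrangement of the arcs near the marked point, which you do not supply.)

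The paper avoids this entirely. Instead of Lemma~\ref{lem:quil}, it triangulates each prism $\sigma\times I$ by the standard triangulation, whose maximal simplices $P_i$ have vertex sets $\{(\alpha_0,0),\dots,(\alpha_i,0),(\alpha_i,1),\dots,(\alpha_k,1)\}$, and interpolates on each $P_i$. In every such $P_i$ the vertices at level $0$ (which receive $\rho$) are precisely the \emph{smaller} arc systems in the flag and those at level $1$ (which receive $\varphi$) are the \emph{larger} ones, so the only mixed pairs that arise are of the form $\rho(\alpha_j),\varphi(\alpha_l)$ with $\alpha_j\subset\alpha_l$ --- exactly the hypothesis of Lemma~\ref{lem:lower}. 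Lemma~\ref{lem:self} and the nesting of regular neighborhoods handle the two unmixed types of pairs, and the gluing lemma assembles the $F_\sigma$'s into a global homotopy. This is why the paper proves Lemmas~\ref{lem:self} and~\ref{lem:lower} in just the one-sided form it does: the prism triangulation never forces the opposite mixed direction. Your approach would go through if you could establish the missing one-sided claim, but as written there is a gap, and the ``bookkeeping'' you anticipate is in fact the substantive missing step.
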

\begin{proof}
Let $\sigma$ be a simplex of $Z$ with vertex set $\sigma^0 = \{\alpha_0, \alpha_1, \cdots, \alpha_k \}$ satisfying $\alpha_0 \subsetneq \alpha_1 \subsetneq \cdots \subsetneq \alpha_k$.
Consider the simplicial complex $P = \sigma \times I$ which is a prism triangulated by $k+1$ simplices of dimension $k+1$ with vertex sets:
\[
\begin{split}
P_0 &  = \{ (\alpha_0,0), (\alpha_0,1), (\alpha_1,1), \cdots, (\alpha_k,1) \}\\
P_1 &  = \{ (\alpha_0,0), (\alpha_1,0),(\alpha_1,1) \cdots, (\alpha_k,1) \}\\
& \vdots \\
%P_i &  = \{ (\alpha_0,0), (\alpha_1,0), \cdots, (\alpha_i,0),  (\alpha_i,1) \cdots, (\alpha_k,1) \}\\
%& \vdots \\
P_k &  = \{ (\alpha_0,0), (\alpha_1,0), \cdots, (\alpha_k,0), (\alpha_k,1) \}\\
\end{split}
\]
The $0$-skeleton of $P$ is the set $\sigma^0 \times \{0,1\}$.  Let $F_\sigma^0: \sigma^0 \times \{0,1\} \to  \CC(\surface_g^1)$ be the function
\[
F_\sigma^0(\alpha_i, \varepsilon) = \left \{\begin{array}{ll} \rho(\alpha_i), & \varepsilon =0 \\ \varphi(\alpha_i), & \varepsilon=1\end{array} \right. .
\]

We wish to show that $F_\sigma^0$ is interpolable.  Each maximal simplex of $P = \sigma \times I$ has vertex set $P_i$ for some $i$.  
Thus we must show that $\supp (F_\sigma^0(P_i))$ is a curve system for each $i$.  Recall that a set of curves is a curve system if and only if each subset of size two is a curve system.  
A subset of size two of the set $\supp (F_\sigma^0(P_i))$ must be a subset of a set of the form  $\supp F_\sigma^0(\alpha, 0) \cup \supp F_\sigma^0(\beta, 0)$, a set of the form $\supp F_\sigma^0(\alpha, 0) \cup \supp F_\sigma^0(\beta, 1)$, or a set of the form $ \supp F_\sigma^0(\alpha, 1)  \cup \supp F_\sigma^0(\beta, 1)$
for some curve systems $\alpha \subset \beta \subsetneq \zeta$.  The set
\[
\supp F_\sigma^0(\alpha, 0) \cup \supp F_\sigma^0(\beta, 0)  =  \{ \rho(\alpha) ,  \rho(\beta)\}
\]
is a curve system by Lemma~\ref{lem:self}.  The set
\[
\supp F_\sigma^0(\alpha, 0) \cup \supp F_\sigma^0(\beta, 1) = \supp \rho(\alpha) \cup \supp \varphi(\beta)
\]
is a curve system by Lemma~\ref{lem:lower}.  Finally the set
\[
\supp F_\sigma^0(\alpha, 1) \cup \supp F_\sigma^0(\beta, 1) = \supp \varphi(\alpha) \cup \supp \varphi(\beta)
\]
is a curve system since it consists of boundary curves of two closed subsurfaces $N(\alpha)$ and $N(\beta)$ of $\surface_g^1$ with $N(\alpha) \subset \inte N(\beta)$.
Hence $F_\sigma^0$ is interpolable.  Let $F_\sigma: \sigma \times I \to  \CC(\surface_g^1)$ be its linear interpolation.  By the gluing lemma these $F_\sigma$'s assemble to a homotopy $F: Z \times I  \to \CC(\surface_g^1)$  from $\rho$ to $\varphi$.
\end{proof}

We now describe a subcomplex $X(\surface_g^1) \subset \CC(\surface_g^1)$ which will prove to be the image of the map $\rho$.  For $1 \leq i \leq 2g$ let $z_i' = \rho(z_i)$ (see Figure~\ref{fig:chain}).   \begin{figure}[htbp]
\begin{center}
%\resizebox{\textwidth}{!}{%
%\framebox{
\includegraphics{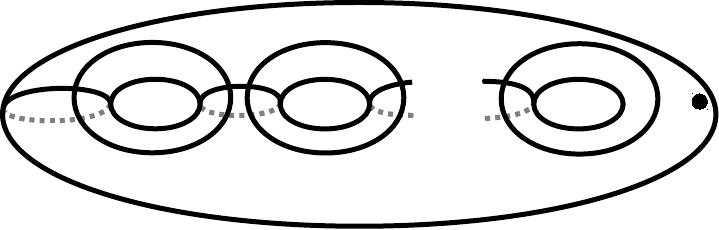}
\put(-84,37){$\cdots$}
\put(-217,32){$z_1'$}
\put(-148,52){$z_2'$}
\put(-122,34){$z_3'$}
\put(-98,52){$z_4'$}
\put(-72,52){$z_{2g}'$}
%}
\end{center}
\caption{\label{fig:chain}}
\end{figure}
For each interval $J  = \{j, j+1, \cdots, m \} \subsetneq \{ 1 , 2, \cdots, 2g \}$ let $N_J$ be a closed regular neighborhood of the set $z_j' \cup z_{j+1}' \cup \cdots \cup z_m'$.  
If $|J|$ is even then $N_J$ has one boundary component.  Let $x_J$ be that boundary component.  
If $|J|$ is odd then $N_J$ has two boundary components.  In this case let $x_J$ be the boundary component of $N_J$ in the ``back half'' (if $j$ is even) or ``top half'' (if $j$ is odd) of the surface.  Let $X^0$ be the set
\begin{equation}
\label{eq:xdef}
X^0 = \{ x_J | \mbox{ $J \subsetneq  \{ 1 , 2, \cdots, 2g \}$ is an interval} \}
\end{equation}
and define $X(\surface_g^1) = \Span X^0$ to be the subcomplex of all simplices of $\CC(\surface_g^1)$  whose vertices are in $X^0$.  
Let $X(\surface_g^0) = \Psi(X(\surface_g^1) )$ be the image of $X(\surface_g^1)$ under the map $\Psi:\CC(\surface_g^1) \to \CC(\surface_g^0)$ from (\ref{eq:psi}) above which ``forgets'' the marked point.
\begin{lemma}
\label{lem:gassoc}
Assume $g \geq 1$ and $n \in \{0,1\}$.  The simplicial complex $X(\surface_g^n)$ is isomorphic as a simplicial complex to $D_{2g+1}$ (see Theorem~\ref{thm:assoc}).
\end{lemma}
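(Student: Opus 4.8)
The plan is to produce a simplicial isomorphism by first matching up vertices, then matching up edges, and finally invoking the fact that both complexes are flag. Recall that $D_{2g+1}=\mathscr{D}(\Lambda_{2g})$, where $\Lambda_{2g}$ is the path graph on the vertex set $\{1,\dots,2g\}$, and that the tubes of $\Lambda_{2g}$ are exactly the proper non-empty \emph{intervals} $J=\{j,j+1,\dots,m\}\subsetneq\{1,\dots,2g\}$; so the vertices of $D_{2g+1}$ are in natural bijection with the index set appearing in (\ref{eq:xdef}), and the candidate vertex map is $J\mapsto x_J$. I would carry out the argument for $n=1$ and deduce $n=0$ at the end. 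The first task is to check that $J\mapsto x_J$ is a bijection onto $X^0$: since $|J|<2g$, the neighbourhood $N_J$ is a proper subsurface of positive genus, so each $x_J$ is essential, and from $x_J$ (together with the side on which $N_J$ lies) one recovers $N_J$, hence the sub-chain $\{z_i':i\in J\}$, hence $J$. The one point needing care is the parity dichotomy in the definition of $x_J$, i.e.\ that when $|J|$ is odd the ``back/top half'' convention still produces curves that are pairwise non-isotopic and distinct from the $x_{J'}$ with $|J'|$ even.

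The heart of the proof is the next step: for two intervals $J,J'$, the pair $\{x_J,x_{J'}\}$ spans a $1$-simplex of $\CC(\surface_g^1)$ — equivalently, $x_J$ and $x_{J'}$ are distinct and have disjoint representatives — if and only if $J$ and $J'$ are compatible tubes of $\Lambda_{2g}$. Since any two distinct intervals are nested, disjoint, or overlapping, I would split into four cases: (i) $J,J'$ nested; (ii) $J\cap J'=\varnothing$ with $J\cup J'$ not an interval; (iii) $J\cap J'=\varnothing$ with $J\cup J'$ an interval (the \emph{adjacent} case); (iv) $J,J'$ overlapping. Cases (i) and (ii) are exactly ``compatible'', and here one wins cheaply: choose the regular neighbourhoods so that $N_J\subset\inte N_{J'}$ in case (i) and $N_J\cap N_{J'}=\varnothing$ in case (ii); then $x_J$ and $x_{J'}$ have disjoint representatives, and by the vertex bijection they are distinct, so $\{x_J,x_{J'}\}$ is an edge.

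The main obstacle is cases (iii) and (iv), which are the incompatible tubes: I must show that there $x_J$ and $x_{J'}$ are forced to \emph{intersect}, so $\{x_J,x_{J'}\}$ is not an edge. In case (iii), with $J$ preceding $J'$, $k=\max J$, $k+1=\min J'$, the chain curves $z_k'$ and $z_{k+1}'$ intersect exactly once and this intersection cannot be undone; the key observation is that inside a regular neighbourhood $S$ of $z_k'\cup z_{k+1}'$ — a one-handled piece of the surface — $x_J$ restricts to a curve isotopic to $z_k'$ and $x_{J'}$ to one isotopic to $z_{k+1}'$, whence $i(x_J,x_{J'})\ge i(z_k',z_{k+1}')=1$. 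The local model is precisely the base case $g=1$, $n=1$, where $X^0=\{x_{\{1\}},x_{\{2\}}\}$ with $x_{\{1\}}$ isotopic to $z_1'$ and $x_{\{2\}}$ isotopic to $z_2'$, which intersect once, so $X(\surface_1^1)\cong S^0\cong D_3$. Case (iv) is handled analogously, taking $S$ to be a regular neighbourhood of the sub-chain indexed by $J\cup J'$ and computing that the two ``parallel'' boundary-type curves $x_J$, $x_{J'}$ have positive geometric intersection there. This is the one place in the argument where a genuine surface-topology computation is needed, rather than merely a convenient choice of nested or disjoint neighbourhoods, and it is exactly where the combinatorics of $D_{2g+1}$ encodes that adjacent and overlapping tubes are incompatible.

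To conclude: $X(\surface_g^1)$ is the full subcomplex of $\CC(\surface_g^1)$ spanned by $X^0$, hence flag (a set of curves is a curve system iff it is so pairwise), and $\mathscr{D}(\Lambda_{2g})$ is flag by definition; so the vertex bijection, shown above to be an isomorphism on $1$-skeleta, extends uniquely to a simplicial isomorphism $X(\surface_g^1)\cong D_{2g+1}$. For $n=0$, recall $X(\surface_g^0)=\Psi(X(\surface_g^1))$, and $\Psi$ of (\ref{eq:psi}) restricts to a simplicial isomorphism $X(\surface_g^1)\to X(\surface_g^0)$: forgetting the marked point sends each $x_J$ to an essential curve (it still bounds the positive-genus subsurface coming from $N_J$), keeps these curves pairwise non-isotopic by the same recovery argument as above, and neither creates nor destroys disjointness among them; equivalently one may simply rerun the preceding steps verbatim for the chain $\Psi(z_1'),\dots,\Psi(z_{2g}')$ in $\surface_g^0$. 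Hence $X(\surface_g^n)\cong D_{2g+1}$ for $g\ge 1$ and $n\in\{0,1\}$.
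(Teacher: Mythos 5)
Your proof takes the same route as the paper's: identify $X^0$ with the set of tubes of $\Lambda_{2g}$ via $J \mapsto x_J$, verify that $x_J$ and $x_{J'}$ are disjoint exactly when $J$ and $J'$ are compatible, and finish by using that both complexes are flag, with the $n=0$ case following because $\Psi$ restricts to a simplicial isomorphism $X(\surface_g^1) \to X(\surface_g^0)$. The paper states the disjointness-iff-compatibility claim as a one-line assertion, so your four-case analysis is a welcome amplification of it; the only caveat is that in cases (iii)/(iv) the phrase ``$x_J$ restricts to a curve isotopic to $z_k'$'' inside $S$ is loosely worded (the restriction of a closed curve to a subsurface is a union of arcs, not a closed curve), though the intended geometric-intersection estimate is correct and can be made precise, e.g.\ by showing directly that the complementary subsurfaces bounded by $x_J$ and $x_{J'}$ can be neither nested nor disjoint when the tubes are adjacent or overlapping.
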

\begin{proof}
For intervals $J_1, J_2 \subsetneq  \{ 1 , 2, \cdots, 2g \}$ the curves $x_{J_1}$ and  $x_{J_2}$ are disjoint precisely when the tubes $J_1$ and $J_2$ for the graph $\Lambda_{2g}$ in Figure~\ref{fig:pathonly} are compatible.  
Let $D_{2g+1}^0$ be the $0$-skeleton of the dual $D_{2g+1}$ of the boundary of the associahedron $K_{2g+1}$.  
The bijection $f^0:X^0 \to D_{2g+1}^0$ given by $f^0(x_J) =  J$ and its inverse are interpolable maps showing that $X(\surface_g^1)$ and $D_{2g+1}$ are isomorphic as simplicial complexes. 
The map  $\Psi |_{X(\surface_g^1)}: X(\surface_g^1) \to X(\surface_g^0)$ is an isomorphism of simplicial complexes.
\end{proof}
\begin{lemma}
\label{lem:ZtoX}
Assume $g \geq 1$ and $n \in \{0,1\}$.  The maps $\rho: Z \to \CC(\surface_g^1)$ and   $\Psi \circ \rho: Z \to \CC(\surface_g^0)$ are simplicial maps.  The images of $\rho$ and  $\Psi \circ \rho$ are $X(\surface_g^1)$ and $X(\surface_g^0)$ respectively.  Furthermore $\rho: Z \to X(\surface_g^1)$ and   $\Psi \circ \rho: Z \to X(\surface_g^0)$ are homotopy equivalences.  
\end{lemma}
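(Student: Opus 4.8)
\emph{Proof proposal.} The plan is to mirror the proof of Lemma~\ref{lem:YtoX}, after first reducing everything to a single statement about $\rho\colon Z\to X(\surface_g^1)$. For each vertex $\alpha$ of $Z$ the image $\rho^0(\alpha)$ is a single curve, so $|\supp\rho(\alpha)|=1$ and $\rho$ is a simplicial map onto its image by Remark~\ref{rem:simp}. Once this image is identified with $X(\surface_g^1)$, the assertions about $\Psi\circ\rho$ follow immediately: by Lemma~\ref{lem:gassoc} the restriction $\Psi|_{X(\surface_g^1)}\colon X(\surface_g^1)\to X(\surface_g^0)$ is an isomorphism of simplicial complexes, so $\Psi\circ\rho$ is simplicial with image $X(\surface_g^0)$, and it is a homotopy equivalence precisely when $\rho\colon Z\to X(\surface_g^1)$ is, since $\Psi|_{X(\surface_g^1)}$ is a homeomorphism.

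Next I would pin down the image. Given a vertex $\alpha$ of $Z$ — a proper sub arc system of $\zeta$ — with first sequential component $\alpha^\star=\{z_j,z_{j+1},\dots,z_k\}$, put $J=\{j,j+1,\dots,k\}$. Comparing the regular neighborhoods used to define $\rho^0$ in Figure~\ref{fig:oesc} with those used to define $x_J$ from the chain of curves $z_i'=\rho(z_i)$ in Figure~\ref{fig:chain}, one checks, separately in the odd and even cases, that $\rho^0(\alpha)=x_J$; the odd/even dichotomy and the ``top half''/``back half'' conventions are set up exactly so that these distinguished boundary curves agree. Hence $\rho^0(Z^0)\subseteq X^0$, and since $\rho$ is simplicial, $\rho(Z)\subseteq\Span X^0=X(\surface_g^1)$. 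Conversely every tube $J\subsetneq\{1,\dots,2g\}$ of $\Lambda_{2g}$ arises as the first sequential component of the vertex $\{z_j,\dots,z_k\}$ of $Z$, so $\rho$ is onto on $0$-skeleta.

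It then remains to show that $\rho\colon Z\to X(\surface_g^1)$ is a homotopy equivalence. The complex $Z$ is the barycentric subdivision of the boundary of a $(2g-1)$-simplex, hence a $(2g-2)$-sphere, and $X(\surface_g^1)\cong D_{2g+1}$ is a $(2g-2)$-sphere by Lemma~\ref{lem:gassoc}; so, as in Lemma~\ref{lem:YtoX}, it suffices to prove $\deg\rho=\pm1$, and this in turn yields surjectivity (a degree $\pm1$ map of spheres is onto, so then $\rho(Z)=X(\surface_g^1)$, completing the image claim). To compute the degree one picks a vertex of $X(\surface_g^1)$ with a single $\rho$-preimage: the vertex $x_J$ with $J=\{1,2,\dots,2g-1\}$ works, since any $\alpha$ whose first sequential component is $J$ must avoid $z_{2g}$ and hence equal $\alpha_0:=\{z_1,\dots,z_{2g-1}\}$. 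Then $\deg\rho=\pm\deg\bigl(\rho_\ast\colon\link(\alpha_0,Z)\to\link(x_J,X(\surface_g^1))\bigr)$, and one would like to recognize this link map as the map $\rho$ for a smaller surface and induct, with base case $\surface_1^1$, where $Z$ and $X(\surface_1^1)$ are both $0$-spheres and $\rho$ is a bijection of vertices, hence a homeomorphism of degree $\pm1$.

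I expect this last induction to be the main obstacle, because — in contrast to the genus-zero situation of Lemma~\ref{lem:YtoX} — a single link does not return a surface of the same type: removing $\alpha_0$ replaces the $(2g-2)$-sphere $Z$ by $\link(\alpha_0,Z)$, a $(2g-3)$-sphere, i.e. it behaves like ``$Z$ on $2g-1$ arcs,'' which is never the arc count of any $\surface_{g'}^1$. One must therefore iterate the link-of-a-vertex reduction twice (removing $\alpha_0$, and then, inside $\link(\alpha_0,Z)$, the vertex $\{z_1,\dots,z_{2g-2}\}$, whose link is the barycentric subdivision of a simplex boundary on the $2g-2$ arcs $z_1,\dots,z_{2g-2}$) to land on the data for $\surface_{g-1}^1$, and then verify that the twice-restricted map is indeed the $\rho$ of $\surface_{g-1}^1$ — that is, that the first $2g-2$ arcs of $\zeta$ and the neighborhood conventions restrict to the corresponding objects on $\surface_{g-1}^1$. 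An alternative route that avoids the induction is to prove surjectivity of $\rho\colon Z\to X(\surface_g^1)$ directly, by exhibiting for each maximal compatible family of tubes an increasing flag of sub arc systems of $\zeta$ mapping onto it, and then checking that the facet of $X(\surface_g^1)$ spanned by the nested tubes $\{1\}\subsetneq\{1,2\}\subsetneq\cdots\subsetneq\{1,\dots,2g-1\}$ has exactly one preimage facet in $Z$ on which $\rho$ restricts to a simplicial isomorphism; that again forces $\deg\rho=\pm1$.
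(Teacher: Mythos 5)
Your first two paragraphs match the paper exactly: the simpliciality via $|\supp\rho(\alpha)|=1$ and Remark~\ref{rem:simp}, the reduction of the $\Psi\circ\rho$ claims to the isomorphism $\Psi|_{X(\surface_g^1)}$ from Lemma~\ref{lem:gassoc}, and the verification that $\rho^0(\alpha)=x_J$ for $J$ the index set of the first sequential component, hence $\rho(Z)\subseteq X(\surface_g^1)$. Where you diverge is in the degree argument, and there you have correctly sensed the obstruction but missed the observation that dissolves it.

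You say that the link-of-a-vertex reduction ``behaves like $Z$ on $2g-1$ arcs, which is never the arc count of any $\surface_{g'}^1$,'' and you then propose either a two-step link iteration or a hands-on surjectivity/facet-count argument. The paper does neither. It simply notes that, as a map of \emph{abstract} simplicial complexes, $\rho\colon Z\to X(\surface_g^1)$ coincides with the map $\rho\colon Y\to X(\surface_0^{2g+2})$ from Lemma~\ref{lem:YtoX}: both are the map from the barycentric subdivision of $\partial\Delta^{2g-1}$ (vertices indexed by proper nonempty subsets of a $2g$-element set) to $D_{2g+1}$ (vertices indexed by proper nonempty intervals) sending a subset to the interval of its first consecutive block — a fact you yourself established in your second paragraph when you identified $\rho^0(\alpha)=x_J$. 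The degree-$\pm1$ statement is therefore already proved: the induction in Lemma~\ref{lem:YtoX} runs over $n$ in the family $\{\surface_0^n\}$, which \emph{is} closed under the link reduction, and the genus-$g$ case inherits the conclusion for free because the combinatorics is identical. Your diagnosis that the genus family is not closed under one link is correct, but it only obstructs an induction carried out geometrically inside $\{\surface_{g'}^1\}$; once you pass to the abstract simplicial level there is nothing left to prove. Your fallback routes would work but are substantially more effort than this one-line reduction.
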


\begin{proof}
The map $\rho: Z \to \CC(\surface_g^1)$ is a simplicial map onto its image by Remark~\ref{rem:simp} and the observation that for each vertex $\alpha$ of $Z$ we have
\[
|\supp \rho(\alpha)| = 1
\]
The map $\Psi$ a simplicial map so the composition $\Psi \circ \rho$ is simplicial.

The curves in the set $X^0$ given in (\ref{eq:xdef}) are precisely the curves in the image of $\rho$.  
Thus if $\alpha$ is a vertex of $Z$ then $\rho(\alpha)$ is a vertex of $X(\surface_g^1)$ and $\Psi \circ \rho(\alpha)$ is a vertex of  $X(\surface_g^0)$.  
The maps $\rho$ and $\Psi \circ \rho$ are  simplicial so $\rho(Z) \subset X(\surface_g^1)$ and $\Psi \circ \rho(Z) \subset X(\surface_g^0)$.

Observe that $Z$ is topologically a $(2g-2)$-sphere and by Lemma~\ref{lem:gassoc} that $X(\surface_g^1)$ is topologically an $(2g-2)$-sphere.  
Moreover, as maps between abstract simplicial complexes the map $\rho:Z  \to X(\surface_g^1)$ agrees with the map $\rho:Y  \to X(\surface_0^{2g+2})$ from Lemma~\ref{lem:YtoX}. 
Hence the map $\rho : Z \to X(\surface_g^1)$ is a homotopy equivalence and has degree $\pm 1$.   It follows that  $\rho : Z \to X(\surface_g^1)$ is surjective.

The restriction $\Psi |_{X(\surface_g^1)}: X(\surface_g^1) \to X(\surface_g^0)$ is an isomorphism of simplicial complexes and hence a degree $\pm 1$ map.  It follows that  $\Psi \circ \rho: Z \to X(\surface_g^0)$ has degree $\pm 1$. It is therefore a surjective simplicial homotopy equivalence.
\end{proof}

\begin{proposition}
\label{prop:geng}
Let $g \geq 1$ and  $n \in \{ 0,1\}$.  Let $X^0$ be the set of essential curves in $\surface_g^n$  given in (\ref{eq:xdef}) and $X(\surface_g^n) \subset \CC(\surface_g^n)$ be the full subcomplex of $\CC(\surface_g^n)$ generated by $X^0$.
Then as a simplicial complex $X(\surface_g^n)$ is isomorphic to the dual $D_{2g+1}$ of the boundary of the associahedron $K_{2g+1}$ (see Theorem~\ref{thm:assoc}).  
Furthermore $X(\surface_g^n)$ is topologically a sphere of dimension $2g-2$ and, choosing an orientation, the class $[X(\surface_g^n)] \in \widetilde{\HH}_{2g-2}(\CC(\surface_g^n);\Z)$ is non-trivial and generates $\widetilde{\HH}_{2g-2}(\CC(\surface_g^n);\Z)$ as  a $\MCG(\surface_g^n)$-module.
\end{proposition}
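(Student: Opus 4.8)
The plan is to run the argument of Proposition~\ref{prop:simpsphere} essentially verbatim, now with the extra inputs assembled in this section (Lemmas~\ref{lem:gassoc}, \ref{lem:ZtoX}, \ref{lem:hom}, and the forgetful map $\Psi$). The first two assertions are immediate from Lemma~\ref{lem:gassoc}, which identifies $X(\surface_g^n)$ with the simplicial complex $D_{2g+1}$, and from Theorem~\ref{thm:assoc} together with the description of $D_m$ preceding it, by which $D_{2g+1}$ is homeomorphic to the $(2g-2)$-sphere. So everything reduces to tracking one homology class.

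First I would verify that $\varphi = \Phi\circ i : Z \to \CC(\surface_g^1)$ carries a fundamental class of the sphere $Z$ to a $\MCG(\surface_g^1)$-module generator of $\widetilde{\HH}_{2g-2}(\CC(\surface_g^1);\Z)$. Every proper sub arc system of $\zeta$ has at most $2g-1$ arcs and hence cannot fill $\surface_g^1$, so every proper face of the $(2g-1)$-simplex $\theta_\zeta$ lies in $\AC_\infty$; thus $\theta_\zeta$ is a relative cycle, and the connecting homomorphism $\partial : \HH_{2g-1}(\AC,\AC_\infty;\Z) \to \widetilde{\HH}_{2g-2}(\AC_\infty;\Z)$, which is an isomorphism because $\AC$ is contractible (Known Result~\ref{fact:con}), sends $[\theta_\zeta]$ to the class of the $(2g-2)$-skeleton of $\theta_\zeta$, i.e.\ to $i_\ast[Z]$ after applying the barycentric subdivision map. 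Since $[\theta_\zeta]$ generates $\St(\surface_g^1)\cong\HH_{2g-1}(\AC,\AC_\infty;\Z)$ as a $\MCG(\surface_g^1)$-module by \cite{Broaddus}, and $\partial$, the barycentric subdivision map, and the homotopy equivalence $\Phi$ of Known Result~\ref{fact:accc} are all $\MCG(\surface_g^1)$-equivariant isomorphisms on homology, the class $\varphi_\ast[Z] = \Phi_\ast i_\ast[Z]$ is a $\MCG(\surface_g^1)$-module generator of $\widetilde{\HH}_{2g-2}(\CC(\surface_g^1);\Z)$.

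With that in hand, the case $n=1$ is finished exactly as in Proposition~\ref{prop:simpsphere}: by Lemma~\ref{lem:ZtoX} the map $\rho : Z \to X(\surface_g^1)$ is a homotopy equivalence, with homotopy inverse $h$ say, and by Lemma~\ref{lem:hom} the map $\rho$, viewed as a map into $\CC(\surface_g^1)$, is homotopic to $\varphi$; hence the inclusion $i_X : X(\surface_g^1)\hookrightarrow\CC(\surface_g^1)$ is homotopic to $\rho\circ h$, which represents $\pm\varphi_\ast[Z]$ since $h$ is a degree $\pm1$ map of $(2g-2)$-spheres. For the case $n=0$, Lemmas~\ref{lem:gassoc} and \ref{lem:ZtoX} give that $\Psi$ restricts to a simplicial isomorphism $X(\surface_g^1)\to X(\surface_g^0)$, while $\Psi:\CC(\surface_g^1)\to\CC(\surface_g^0)$ is a homotopy equivalence (see (\ref{eq:psi})) that is equivariant with respect to the surjective forgetful homomorphism $\MCG(\surface_g^1)\to\MCG(\surface_g^0)$; hence $\Psi_\ast[X(\surface_g^1)] = \pm[X(\surface_g^0)]$ is a $\MCG(\surface_g^0)$-module generator of $\widetilde{\HH}_{2g-2}(\CC(\surface_g^0);\Z)$. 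Finally, non-triviality of $[X(\surface_g^n)]$ is automatic, since $\widetilde{\HH}_{2g-2}(\CC(\surface_g^n);\Z)$ is nonzero by Known Result~\ref{fact:cc} and a module generator of a nonzero module cannot be $0$ — alternatively, one may repeat the cohomological-dimension/duality argument used in Proposition~\ref{prop:gen0}.

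I expect the only step requiring genuine care to be the second paragraph: confirming that the connecting homomorphism does send $[\theta_\zeta]$ to the skeleton class, and that this persists as a \emph{module} generator all the way through barycentric subdivision and the equivalence $\Phi$ (and, for $n=0$, through $\Psi$). Everything else is a formal consequence of the lemmas already proved in this section, mirroring the genus-zero case.
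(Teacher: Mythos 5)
Your proposal is correct and follows essentially the same route as the paper's proof: identify $X(\surface_g^n)$ with $D_{2g+1}$ via Lemma~\ref{lem:gassoc}, transfer the module generator $[\theta_\zeta]$ of $\St(\surface_g^1)$ from \cite{Broaddus} through $\partial$, barycentric subdivision, and $\Phi$ to $\varphi_*[Z]$, then replace $\varphi$ by the homotopic simplicial map $\rho$ (Lemma~\ref{lem:hom}) and use Lemma~\ref{lem:ZtoX} plus the forgetful equivalence $\Psi$ to conclude. You simply spell out, more explicitly than the paper does, the $\MCG$-equivariance of the connecting homomorphism, the subdivision map, and $\Phi$ that underlies the paper's terse assertion that $\varphi$ represents a module generator.
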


\begin{proof}
Lemma~\ref{lem:gassoc} shows that $X(\surface_g^1)$, $X(\surface_g^0)$ and $D_{2g+1}$ are isomorphic simplicial complexes.
The map $\varphi: Z \to \CC(\surface_g^1)$ (respectively $\Psi \circ \varphi: Z \to \CC(\surface_g^0)$) above represents a $\MCG(\surface_g^1)$-module (respectively  $\MCG(\surface_g^0)$-module) generator for $\widetilde{\HH}_{2g-2}(\CC(\surface_g^1);\Z)$ (respectively  $\widetilde{\HH}_{2g-2}(\CC(\surface_g^0);\Z)$).  
As we noted $\rho$ is homotopic to $\varphi$ so  $\Psi \circ \rho$ is homotopic to $\Psi \circ \varphi$.  Let $i_{X(\surface_g^1)}: X(\surface_g^1) \to \CC(\surface_g^1)$ and $i_{X(\surface_g^0)}: X(\surface_g^0) \to \CC(\surface_g^0)$  be the inclusion maps.  
By Lemma~\ref{lem:ZtoX} the maps $\rho: Z \to X(\surface_g^1)$ and $\Psi \circ \rho: Z \to X(\surface_g^0)$ have homotopy inverses $h_1: X(\surface_g^1) \to Z$ and $h_0: X(\surface_g^0) \to Z$ respectively.  
Thus $i_{X(\surface_g^1)}$ and  $i_{X(\surface_g^0)}$ are  homotopic to $\rho \circ h_1$ and $\Psi \circ \rho \circ h_0$ respectively which represent the same classes as $ \varphi$ in $\widetilde{\HH}_{n-4}(\CC(\surface_g^1);\Z)$ and $\Psi \circ \varphi$ in $\widetilde{\HH}_{n-4}(\CC(\surface_g^0);\Z)$ respectively.
\end{proof}

\subsection{The proof of  Theorem~\ref{thm:notiff}}
\label{sec:gcomp}

Again let $g \geq 1$ and $n \in \{0,1\}$.  In \cite{AL} a finite rigid set $\mathfrak{X}(\surface_g^n) \subset \CC(\surface_g^n)$ is given.  Our essential sphere $X(\surface_g^n)$ is a subset of $\mathfrak{X}(\surface_g^n)$.  Given the surprising coincidence of our essential sphere and the finite rigid set of Aramayona and Leininger for the surface $\surface_0^n$ one might be tempted to conjecture that a subcomplex of the curve complex is essential if and only if it is finitely rigid.   As suggested by Theorem~\ref{thm:notiff} this is not the case. See $\S$\ref{sec:intro} of this paper for the statement of Theorem~\ref{thm:notiff}.

\begin{proof}[Proof of Theorem~\ref{thm:notiff}]
Suppose that $g \geq 3$ and $n \in \{0,1\}$.
Let $X^0$ be the set of curves described in (\ref{eq:xdef}).  
Let $N_{\{1,2,3\}}$ be a closed regular neighborhood of $z_1' \cup z_2' \cup z_3'$ where the curves $z_1', z_2', z_3'$ are as in Figure~\ref{fig:chain} above.  
Then $N_{\{1,2,3\}}$ has two boundary components.  One boundary component $x_{\{1,2,3\}}$ of $N_{\{1,2,3\}}$ is in $X^0$ the other $x_{\{1,2,3\}}'$ is not.  Let $f^0: X^0 \to \CC(\surface_g^n)$ be the function
\[
f^0(y) = \left\{ \begin{array}{ll} y, & y \neq x_{\{1,2,3\}} \\ x_{\{1,2,3\}}', & y =  x_{\{1,2,3\}} \end{array} \right. .
\]
Every curve in $X^0$ that is disjoint from $x_{\{1,2,3\}}$ is also disjoint from $x_{\{1,2,3\}}'$. Thus $f^0$ is interpolable with linear interpolation $f:X \to \CC(\surface_g^n)$.  
The map $f$ is simplicial since $|\supp f(x)| = 1$ for all $x \in X^0$.  
The map $f$ is an injection since if $x,y \in X^0$ are distinct disjoint curves then $f(x)$ and $f(y)$ are distinct disjoint curves. 
The map $f$ cannot be induced by an extended mapping class since $x_{\{1,2,3\}}$  has non-empty intersection with $x_{\{3,4,5\}}$ but $f(x_{\{1,2,3\}}) = x_{\{1,2,3\}}'$ and $f(x_{\{3,4,5\}}) = x_{\{3,4,5\}}$ are disjoint.

If $g=2$ and $n=1$ then $X \subsetneq \mathfrak{X}$. Again, we claim that $X$ is not a finite rigid set.  Let $X^0$ be the set of curves described in (\ref{eq:xdef}). 
 Let $N_{\{1,2\}}$ be a closed regular neighborhood of $z_1' \cup z_2'$ where the curves $z_1'$ and $z_2' $ are as in Figure~\ref{fig:chain} above.  
 Then $N_{\{1,2\}}$ has a single boundary component $x_{\{1,2\}}$.  
 Fix a simple path $p$ from $x_{\{1,2\}}$ to the marked point which is disjoint from all other curves in $X^0$.  Let $x_{\{1,2\}}'$ be the boundary component of a regular neighborhood of $x_{\{1,2\}} \cup p$ which is not isotopic to $x_{\{1,2\}}$.   Let $f^0: X^0 \to \CC(\surface_g^n)$ be the function
\[
f^0(y) = \left\{ \begin{array}{ll} y, & y \neq x_{\{1,2\}} \\ x_{\{1,2\}}', & y =  x_{\{1,2\}} \end{array} \right. .
\]
The function $f^0$ preserves disjointness of curves.  Thus $f^0$ is interpolable with continuous linear interpolation $f:X \to \CC(\surface_g^n)$.  
The map $f$ is simplicial since $|\supp f(x)| = 1$ for all $x \in X^0$.  The map $f$ is an injection since if $x,y \in X^0$ are distinct disjoint curves then $f(x)$ and $f(y)$ are distinct disjoint curves.

We claim that $f$ is not induced by an extended mapping class $h \in \MCG^\pm(\surface_g^n)$.  Note that $f$ is the identity on the chain curves $z_1', z_2', z_3', z_4'$.  
Suppose that the mapping class $h \in \MCG^\pm(\surface_g^n)$ extends $f$.  
Then without loss of generality we may represent the class of $h$  by a map which induces an isometry on the regular $12$-gon $G_{12}$ one gets by cutting along all of the chain curves.    
There are 4 possibilities for $h$ generated by the  orientation reversing reflection about the plane through the even curves  $z_2', z_4'$ and the orientation reversing reflection of the surface about the plane through the odd curves $z_1', z_3'$.  
Of these $4$ maps only the identity map fixes both $x_{\{1,2,3\}}$ and $x_{\{2,3,4\}}$.  Since $f$ fixes both $x_{\{1,2,3\}}$ and $x_{\{2,3,4\}}$ but is not the identity it cannot be induced by an extended mapping class.
\end{proof}

\subsection{Theorems~\ref{thm:same} and \ref{thm:notiff} for sporadic surfaces of low complexity}
\label{SS:sporadic}
We address the ``missing cases'' in  Theorems~\ref{thm:same} and \ref{thm:notiff}.
The case  of $\surface^1_2$ was included in the statement of Theorem 2, but $\surface^0_2$ was omitted.  Indeed, as noted by Aramayona and Leininger \cite[$\S$3.1]{AL}, we have $X(\surface_2^0) = \mathfrak{X}(\surface_2^0)$ and hence $X(\surface_2^0)$ is a finite rigid set.

Recall that from the group cohomological point of view our nonstandard definition of the curve complexes $\CC(\surface_0^4)$, $\CC(\surface_1^0)$ and $\CC(\surface_1^1)$ as countably infinite discrete sets is appropriate. 
In particular, Propositions~\ref{prop:simpsphere} or  \ref{prop:geng} hold for these cases. 
Clearly these complexes have no finite rigid subsets.  In particular our $0$-spheres  $X(\surface_0^4)$, $X(\surface_1^0)$ and $X(\surface_1^1)$ cannot be finite rigid subsets of their respective curve complexes.

However, in these three sporadic cases if $\CC'$ is the standard 2-dimensional curve complex with $0$-skeleton given by the set of essential simple closed curves and a $k$-simplex for each set of $k+1$ curves with minimal pairwise intersection number, 
then our homologically non-trivial spheres in $\CC$ can be associated with rigid sets in $\CC'$.  Let $X'$ be the subcomplex of $\CC'$ spanned by $X$.
The finite rigid set $\mathfrak{X}$ of Aramayona and Leininger \cite[$\S$4]{AL} is a maximal 2-simplex and a simplicial injection of $\mathfrak{X}$ into $\CC'$ is induced by a unique extended mapping class.  
We may take our finite rigid set $X'$ to be a proper subcomplex of $\mathfrak{X}$.   A simplicial injection of $X'$ into $\CC'$ is induced by a unique mapping class.   In this case $X'$ is not a sphere, but a cell of dimension one.

\bigskip

\noindent
{\sc Joan Birman\\
Department of Mathematics, Barnard-Columbia\\
2990 Broadway\\
New York, NY 10027, USA\\}
\medskip
{\tt \verb|jb@math.columbia.edu|}

\noindent
{\sc Nathan Broaddus\\
Department of Mathematics\\
Ohio State University\\
231 W. 18th Ave.\\
Columbus, OH 43210-1174, USA\\}
\medskip
{\tt \verb|broaddus@math.osu.edu|}

\noindent
{\sc William W. Menasco\\
Department of Mathematics\\
University at Buffalo--SUNY\\
Buffalo, NY 14260-2900, USA\\}
\medskip
{\tt \verb|menasco@buffalo.edu|}
\end{document}